\newcommand{\angled}[1]{\langle#1\rangle}
\newcommand{\Z}{\mathbb Z}
\newcommand{\R}{\mathbb R}
\newcommand{\G}{\mathcal G}
\newcommand{\p}{\mathcal P}
\newcommand{\Ga}{\Gamma}
\newcommand{\h}{\mathcal H}
\newcommand{\acts}{\curvearrowright}
\newtheorem{thm}{Theorem}[section]
\newtheorem{lem}[thm]{Lemma}
\newtheorem{cor}[thm]{Corollary}
\theoremstyle{definition}
\newtheorem{defn}[thm]{Definition}
\title{Cocompactly cubulated 2-dimensional Artin groups}
\author{Jingyin Huang}
\author{Kasia Jankiewicz}
 \author{Piotr Przytycki}
\address{Dept. of Math. \& Stats.\\
                    McGill University \\
                    Montreal, Quebec, Canada H3A 0B9}
           \email{jingyin.huang@mcgill.ca}
           \email{kasia@math.mcgill.ca}
           \email{piotr.przytycki@mcgill.ca}
\begin{document}

\maketitle
\begin{abstract}
\noindent We give a necessary and sufficient condition for a
2-dimensional or a three-generator Artin group $A$ to be
(virtually) cocompactly cubulated, in terms of the defining graph
of $A$.

\end{abstract}

\section{Introduction}

We say that a group is (\emph{cocompactly}) \emph{cubulated} if
it acts properly (and compactly) by combinatorial automorphisms on a CAT(0) cube complex. We
say that a group is \emph{virtually cocompactly cubulated}, if it has a
finite index subgroup that is cocompactly cubulated.
Such groups fail to
have Kazhdan's property (T) \cite{NR}, are bi-automatic \cite{Swiat}, satisfy the Tits
Alternative \cite{SW} and, if cocompactly cubulated, they satisfy rank-rigidity \cite{caprace2011rank}. For more background on CAT(0) cube complexes, see
the survey article of Sageev \cite{survey}.

The \emph{Artin group} with generators $s_i$ and
exponents $m_{ij}=m_{ji}\geq 2$, where $i\neq~j$, is presented by
relations
$\underbrace{s_is_js_i\cdots}_{m_{ij}}=\underbrace{s_js_is_j\cdots}_{m_{ij}}$.
Its \emph{defining graph} has vertices
corresponding to $s_i$ and edges labeled $m_{ij}$ between
$s_i$ and $s_j$ whenever $m_{ij}<\infty$.

Artin groups that are \emph{right-angled} (i.e.\ the ones with
$m_{ij}\in \{2,\infty\}$) are cocompactly cubulated,
and they play a prominent role in theory of special
cube complexes of Haglund and Wise. However, much less is known
about other Artin groups, in particular about braid groups. In
\cite{Hier} Wise suggested an approach to cubulating Artin
groups using cubical small cancellation. However, we failed to execute this approach: we were not able to establish the B(6) condition.

In this article we consider Artin groups that have three
generators, or are $2$-\emph{dimensional}, that is, their
corresponding Coxeter groups have finite special subgroups of
maximal rank $2$ (or, equivalently, $2$-dimensional Davis
complex). We characterise when such a group is virtually
cocompactly cubulated. This happens only for very rare defining
graphs. An \emph{interior} edge of a graph is an edge that is
not a leaf.

\begin{thm}
\label{thm:main}
Let $A$ be a $2$-dimensional Artin group. Then the following
are equivalent.
\begin{enumerate}[(i)]
\item $A$ is cocompactly cubulated,
\item $A$ is virtually cocompactly cubulated,
\item each connected component of the defining graph of $A$
    is either
\begin{itemize}
\item
    a vertex, or an edge, or else
\item
   all its interior edges are labeled by $2$ and all its leaves are
    labelled by even numbers.
\end{itemize}
\end{enumerate}

Moreover, if $A$ is an arbitrary Artin group, then (iii)
implies (i).
\end{thm}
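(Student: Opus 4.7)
The plan is to divide the proof into three parts: $(i) \Rightarrow (ii)$ is immediate; $(iii) \Rightarrow (i)$ I would establish for arbitrary Artin groups (covering both the equivalence direction and the moreover clause); and $(ii) \Rightarrow (iii)$ is the obstruction direction that relies on the $2$-dimensional hypothesis. The main obstacle is the last of these.

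For $(iii) \Rightarrow (i)$, I use that $A$ is a free product over the connected components of its defining graph, and cocompact cubulation is preserved under free products via the Bass--Serre wedge construction; hence it suffices to cubulate each component. Isolated vertices and single edges give $\Z$ and dihedral Artin groups $A(m)$, both admitting explicit cocompact cubulations. The remaining components are trees whose interior edges are labelled $2$, giving commutation, and whose leaves are labelled by even integers. Such a component decomposes, following the tree shape, as an iterated amalgamated product of dihedral pieces $A(m_i)$ along cyclic subgroups generated by standard generators. The commutation on interior edges together with the parity of the leaf labels is precisely what ensures that each amalgamating $\Z$-subgroup sits inside a hyperplane stabiliser of the cubulation of the adjacent dihedral piece in a way compatible with gluing, producing an equivariant cocompact CAT$(0)$ cube complex for the component.

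For $(ii) \Rightarrow (iii)$, I argue by contrapositive: a failure of (iii) in a nontrivial component provides either (a) an interior edge labelled $m \geq 3$, or (b) a leaf labelled by an odd integer. In each case one extracts a $3$-generator subgroup $B \leq A$ of amalgam form $A(m) *_{\langle s \rangle} C$ in which the pinch generator $s$ is not central in the factor $A(m)$, with $C$ either another dihedral factor (case (a)) or a $\Z$ (case (b)). The obstruction comes from the $\Z^2$ centralizer of $s$ in $A(m)$ for $m \geq 3$---namely $\langle s, \Delta \rangle$, where $\Delta$ generates the centre of $A(m)$---together with the fact that $\Delta$ also commutes with the entire non-abelian group $A(m)$. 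In any hypothetical cocompact action of a finite-index subgroup of $A$ on a CAT$(0)$ cube complex, the Caprace--Sageev rank-rigidity theorem would force $\Delta$ to translate along a flat whose stabiliser is virtually abelian of a specific rank; but the amalgam structure of $B$ forces this stabiliser to contain a finite-index subgroup of $A(m)$, which is not virtually abelian. This yields the contradiction.

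The principal obstacle is ensuring the obstruction survives passage to a finite-index subgroup, since (ii) only affords a virtual cubulation. The argument must therefore use commensurability-invariant data such as abstract commensurators of $\Z$-subgroups generated by standard generators, or quasi-centres of virtually abelian subgroups. I expect the proof to reduce, after a case analysis of how (iii) can fail, to dispatching a short list of forbidden $3$-generator configurations; each would be handled by essential-core reductions and rank rigidity applied to a carefully chosen pair of elements so as to pin down both a would-be rank-one axis and a would-be flat, deriving a contradiction from the amalgam structure of the dihedral pieces.
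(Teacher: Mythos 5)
Your outline of (i)$\Rightarrow$(ii) and the overall case division are fine, and your (iii)$\Rightarrow$(i) is close in spirit to the paper's construction (gluing cube complexes for dihedral pieces onto the part of the graph spanned by interior edges). One correction there: the interior subgraph need not be a tree, even in the $2$-dimensional case (a $4$-cycle of $2$-labelled edges is allowed), and the ``moreover'' clause requires arbitrary Artin groups; so the component is not in general an iterated amalgam of dihedral pieces over cyclic subgroups. The paper instead treats the interior part as a right-angled Artin group via its Salvetti complex and glues the leaf complexes along locally convex standard circles.

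The serious problem is in (ii)$\Rightarrow$(iii). Your stated obstruction --- that rank rigidity forces the central element $\Delta$ of $A(m)$ to translate along a flat with virtually abelian stabiliser, contradicting that the stabiliser contains a finite-index subgroup of $A(m)$ --- is not a theorem, and it cannot be one, because it fails to separate the cubulated configurations from the obstructed ones. Your setup (a dihedral factor $A(m)$ with $m\geq 3$, its centre $\langle\Delta\rangle$, the $\Z^2=\langle s,\Delta\rangle$, and an amalgamation over the non-central generator $s$) is present verbatim when $(s,t)$ is a leaf with \emph{even} label $m\geq 4$ attached to a $2$-labelled interior edge; but that group \emph{is} cocompactly cubulated by (iii)$\Rightarrow$(i). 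Concretely, $A(m)$ is virtually $F_k\times\Z$ acting on (tree)$\times\R$: the element $\Delta$ is central in a non--virtually-abelian group and nothing in Caprace--Sageev rank rigidity (or the Flat Torus Theorem, which only gives $\mathrm{Min}(\Delta)=\R\times Y$ with the \emph{centraliser} acting) forces any flat stabiliser to be virtually abelian. The missing idea is the vertical/horizontal dichotomy for the shared generator: the convex core of (a finite-index subgroup of) $A(m)$ splits as $V\times H$ with the centre translating in $V$ and acting almost trivially on $H$; the coarse intersection with the adjacent special subgroup is the gate over $\langle b\rangle$, which must be essentially contained in one factor. The contradiction then comes from an \emph{algebraic} computation in $A(m)$ (the identity $b^{2n}\in\Delta^2[A'_n,A'_n]$ for $n$ odd, $a^kb^k\in z^{k'}[A_n,A_n]$ for $n$ even, fed through a surface/covering argument to survive passage to finite-index subgroups) showing that some conjugate power of $b$ has nonzero translation in the $V$-direction, while no power of $b$ lies in the centre. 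Note also that in the even case one only learns that one of $a^m,b^m$ is ``vertical'', which is exactly why both endpoints of the edge must be interior; your case analysis does not register this asymmetry, which is the whole reason even leaves are permitted while even interior labels $\geq 4$ are not.
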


\begin{thm}
\label{thm:3gen}
Let $A$ be a three-generator Artin group. Then the following
are equivalent.
\begin{enumerate}[(i)]
\item $A$ is cocompactly cubulated,
\item $A$ is virtually cocompactly cubulated,
\item the defining graph of $A$ is as in
    Theorem~\ref{thm:main}(iii) or has two edges labelled
    by~2.
\end{enumerate}
\end{thm}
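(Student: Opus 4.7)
My plan is to reduce to Theorem~\ref{thm:main} whenever possible, and handle the handful of residual cases by hand. The implication (i)$\Rightarrow$(ii) is immediate. For (iii)$\Rightarrow$(i), if the defining graph $\Gamma$ of $A$ fits Theorem~\ref{thm:main}(iii), then the ``moreover'' clause of that theorem gives what we want. Otherwise $\Gamma$ is a triangle with two edges labelled by $2$: the vertex shared by those two edges commutes with both other generators, so $A\cong\mathbb Z\times D$ with $D$ a dihedral Artin group. Since $D$ is cocompactly cubulated by Theorem~\ref{thm:main} (its defining graph is a single edge), so is the product $A$.

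For (ii)$\Rightarrow$(iii) I would split cases on $\Gamma$, a graph on three vertices. If $\Gamma$ has at most two edges, no rank-$3$ spherical special subgroup exists, so $A$ is $2$-dimensional and Theorem~\ref{thm:main} directly yields condition (iii) of the present theorem. If $\Gamma$ is a triangle whose Coxeter group is infinite, $A$ is still $2$-dimensional; since every edge of a triangle is interior, Theorem~\ref{thm:main}(iii) forces all three labels to equal $2$, which falls under (iii) here. The only remaining case is that $\Gamma$ is a triangle with finite Coxeter group: labels $(2,2,m)$ (already in (iii)), or one of the irreducible spherical triples $(2,3,3)$, $(2,3,4)$, $(2,3,5)$, corresponding to the Artin groups $A(A_3)=B_4$, $A(B_3)$, $A(H_3)$.

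The crux is therefore to exclude these three irreducible spherical rank-$3$ Artin groups from being virtually cocompactly cubulated. My approach would exploit that each such $A$ has infinite cyclic centre $Z=\langle\Delta^2\rangle$ and simultaneously contains a $\mathbb Z^2$ subgroup (e.g.\ generated by $\Delta^2$ and any standard generator $s_i$). Suppose a finite index $A'\le A$ acted properly cocompactly on a CAT(0) cube complex $X$. After essentialising the action and possibly passing to further finite index, I would apply Caprace--Sageev rank rigidity: a power of $\Delta^2$ lying in $A'$ is central, and hence cannot be a rank-one isometry, since its centraliser would have to be virtually cyclic, contradicting $\mathbb Z^2\le A'$. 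So $X$ splits non-trivially as a product with $Z$ acting as translations on an $\mathbb R$-factor, and up to finite index $Z$ becomes a direct factor. Thus $A$ would be virtually of the form $\mathbb Z\times H$ with $H$ commensurable with $A/Z$. The final step, which I expect to be the main technical difficulty, is to contradict this virtual splitting for each of $A(A_3)$, $A(B_3)$, $A(H_3)$ by a cohomological obstruction: for instance, showing that the Euler class of the central extension $1\to Z\to A\to A/Z\to 1$ does not become torsion on any finite index subgroup.
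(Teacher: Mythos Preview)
Your reductions for (i)$\Rightarrow$(ii), (iii)$\Rightarrow$(i), and the $2$-dimensional part of (ii)$\Rightarrow$(iii) are fine and match the paper. Your rank-rigidity step in the spherical case is also essentially the paper's Lemma~\ref{lem:rigid}: from a cocompact cubulation of a finite-index $\hat A$ one does obtain, after passing to further finite index, a splitting $\hat A\cong \hat Z\times \hat A_0$ with $\hat A_0$ cocompactly cubulated.

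The gap is in your final step. The cohomological obstruction you propose does not exist: each of $A(A_3),A(B_3),A(H_3)$ \emph{does} virtually split off its centre. For instance, in $B_4=A(A_3)$ the abelianisation $\phi\colon B_4\to\Z$ sends $\Delta^2$ to $12$; on the index-$12$ subgroup $H=\phi^{-1}(12\Z)$ the map $\phi/12$ is a retraction onto the central $Z=\langle\Delta^2\rangle$, so $H\cong Z\times\ker\phi$. The same abelianisation trick works for $A(B_3)$ and $A(H_3)$. Hence the Euler class becomes zero on a finite-index subgroup, and your proposed contradiction cannot be obtained.

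The paper's argument proceeds differently after the splitting $\hat A\cong \hat Z\times \hat A_0$. Rather than contradicting the splitting, it uses that $\hat A_0$ is itself cocompactly cubulated, with asymptotic rank $\le 2$ (from the $3$-dimensional $K(A,1)$ via Lemma~\ref{rank}). The images of $A_{ab}$ and $A_{bc}$ in $A/Z$ embed (Lemma~\ref{lem:proj}(ii)), still intersect only in $A_b$ (Lemma~\ref{lem:proj}(iii)), and remain quasi-isometrically embedded in $\hat A_0$ (Lemma~\ref{lem:qi}). With these facts in hand, the paper reruns verbatim the geometric argument from part~(1) of the proof of Theorem~\ref{thm:2dim}: apply Theorem~\ref{product of hyperbolic} to $\hat A_{ab}$ and $\hat A_{bc}$, analyse the gate, and invoke Corollary~\ref{cor:odd n} to reach a contradiction with Lemma~\ref{lem:free times z}(2). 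So the spherical case is handled by reducing to the same ``two convex-cocompact $F_k\times\Z$ subgroups with cyclic intersection'' picture as in the $2$-dimensional proof, not by an Euler-class computation.
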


\subsection{Remarks}

From Theorem~\ref{thm:3gen} it follows that the $4$-strand
braid group is not virtually cocompactly cubulated.

Note that, independently, Thomas Haettel has obtained a full
classification of cocompactly cubulated Artin groups. We intend
to bring with Haettel our results to common denominator and
prove that an Artin group is virtually cocompactly cubulated
only if it is cocompactly cubulated.

The equivalence of (i) and (ii) has no
counterpart for Coxeter groups, where the group
$\widetilde{A}_2$ generated by reflections in the sides of an
equilateral triangle in $\R^2$ is virtually cocompactly
cubulated, but not cocompactly cubulated.

There are Artin groups that do not satisfy the equivalent
conditions from Theorem~\ref{thm:main}, but are cubulated.
Namely, it follows from
\cite{brunner1992geometric,hermiller1999artin} that if the
defining graph of $A$ is a tree, then $A$ is the fundamental
group of a link complement that is a graph manifold with
boundary. Hence by the work of Liu \cite{Liu} or Przytycki and
Wise \cite{PW1} the Artin group $A$ is cubulated.

Artin groups of \emph{large type}, that is, with all
$m_{ij}\geq 3$ are $2$-dimensional. For many of them Brady and
McCammond constructed 2-dimensional CAT(0) complexes with
proper and cocompact action \cite{BradyMccammond2000}. However,
these complexes are built of triangles, not squares.

\subsection{Some historical background}

Sageev invented a way of \emph{cubulating} groups (i.e.\
showing that they are cubulated) using codimension 1-subgroups
\cite{Sag}, which was later also explained in the language of
\emph{walls} in the Cayley complex of the group \cite{CN,Nica}.
Here we give a brief account on some cubulation results, for a
more complete one see \cite{HruW}.

Using the technology of walls, Niblo and Reeves cubulated Coxeter
groups \cite{NR} and Caprace and M\"uhlherr analysed when
this cubulation is cocompact \cite{CapraceM}. It is not known if all
Coxeter groups are virtually cocompactly cubulated. Wise cocompactly cubulated small cancellation groups \cite{Wsmall},
and Ollivier and Wise cocompactly cubulated random groups at
density $< \frac{1}{6}$ \cite{OW}.

Furthermore, using the surfaces of Kahn and Markovic, Bergeron
and Wise cocompactly cubulated the fundamental groups of closed
hyperbolic 3-manifolds \cite{KM,BW}, and later Wise cocompactly
cubulated the fundamental groups of compact hyperbolic
3-manifolds with boundary \cite{Hier}. Hagen and Wise
cocompactly cubulated hyperbolic free-by-cyclic groups
\cite{HagW}.

Groups that are not (relatively) hyperbolic are harder to
cubulate cocompactly. Przytycki and Wise cubulated the
fundamental groups of all compact 3-dimensional manifolds that
are not graph manifolds, as well as graph manifolds with
boundary \cite{PW1,PW2}. In \cite{Liu} Liu gave a criterion for a graph
manifold fundamental group to be virtually cubulated \emph{specially}
(meaning that the quotient of the action admits a local isometry into the
Salvetti complex of a right-angled Artin group), but we do not
know if this is equivalent to just being cubulated. Hagen and Przytycki gave a criterion for a
graph manifold fundamental group to be cocompactly cubulated
\cite{hagen-przytycki}. In general, it is difficult to find obstructions for
groups to be cubulated. Another result of this type is Wise's
characterization of tubular groups that are cocompactly
cubulated \cite{Wise:Tubular}.

\subsection{Proof outline for (i)$\Rightarrow$(iii) in Theorem~\ref{thm:main}}
Given a $2$-dimensional Artin group acting properly and
cocompactly on a CAT(0) cube complex, we show that its two-generator special subgroups are convex cocompact. More
precisely, each of them acts cocompactly on a convex subcomplex which
naturally decomposes as a product of a vertical factor and a
horizontal factor. Geometrically, the intersection of two such
subgroups is either vertical or horizontal. However, if Theorem~\ref{thm:main}(iii) is not satisfied, then
this intersection is neither vertical nor horizontal by algebraic
considerations.

One of the ingredients of the proof is Theorem~\ref{product of
hyperbolic}, which asserts that a top rank product of hyperbolic
groups acting on a CAT(0) cube complex is always convex
cocompact.

\subsection{Organization}
In Section~\ref{sec:preliminaries} we give some background on
CAT(0) spaces and CAT(0) cube complexes. Section~\ref{sec:cube
complexes} is devoted to the proof of Theorem~\ref{product of
hyperbolic}. In Section~\ref{sec:artin groups} we give some
background on Artin groups and discuss some algebraic
properties of two-generator Artin groups. Finally, in
Section~\ref{sec:main} we prove Theorem~\ref{thm:main} and in
Section~\ref{sec:three} we prove Theorem~\ref{thm:3gen}.
\subsection{Acknowledgements}
The authors would like to thank Daniel T. Wise for helpful discussions. The third author was partially supported by National Science Centre DEC-2012/06/A/ST1/00259 and NSERC.

\section{Preliminaries}\label{sec:preliminaries}
A group is a \emph{$\mathrm{CAT(0)}$ group} if it acts properly
and cocompactly on a CAT(0) space. We assume the reader is
familiar with the basics of CAT(0) spaces and groups. For
background, see~\cite{MR1744486}. In this section we collect
some less classical results.

\subsection{Asymptotic rank}
The following definition was introduced in
\cite{kleiner1999local}.

\begin{defn}
Let $X$ be a $\mathrm{CAT}(\kappa)$ space. For $x\in X$ we
denote by $\Sigma_{x}X$ the CAT(1) space that is the completion
of the space of directions at $x$ \cite[Definition
II.3.18]{MR1744486}. The \textit{geometric dimension} of $X$,
denoted $\mathrm{GeomDim}(X)$ is defined inductively as
follows.
\begin{itemize}
\item $\mathrm{GeomDim}(X)=0$ if $X$ is discrete,
\item $\mathrm{GeomDim}(X)\le n$ if
    $\mathrm{GeomDim}(\Sigma_{x}X)\le n-1$ for any $x\in
    X$.
\end{itemize}
\end{defn}

\begin{defn}
Let $X$ be a CAT(0) space. Then its \textit{asymptotic rank},
denoted by $\mathrm{asrk}(X)$, is the supremum of the geometric
dimension of the asymptotic cones of~$X$.
\end{defn}


\begin{thm}
\label{thm:asrk}
Let $X$ and $Y$ be $\mathrm{CAT(0)}$ spaces.
Then
\begin{enumerate}
\item $\mathrm{asrk}(X\times Y)\ge
    \mathrm{asrk}(X)+\mathrm{asrk}(Y)$,
\item if $\mathrm{asrk}(X)\le 1$, then $X$ is hyperbolic.
\end{enumerate}
\end{thm}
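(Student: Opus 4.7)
The starting point is that asymptotic cones commute with finite products: for a non-principal ultrafilter $\omega$, basepoints $(x_n)\subset X$, $(y_n)\subset Y$, and scales $d_n\to\infty$,
\[
\mathrm{Cone}_\omega\bigl(X\times Y,(x_n,y_n),d_n\bigr)\cong\mathrm{Cone}_\omega(X,x_n,d_n)\times\mathrm{Cone}_\omega(Y,y_n,d_n).
\]
Since geometric dimension is integer-valued, for any $m\leq\mathrm{asrk}(X)$ and $n\leq\mathrm{asrk}(Y)$ the parameters can be chosen so that the two factors on the right have geometric dimension at least $m$ and $n$. Part~(1) thus reduces to the superadditivity estimate $\mathrm{GeomDim}(A\times B)\geq\mathrm{GeomDim}(A)+\mathrm{GeomDim}(B)$ for CAT(0) spaces $A$ and $B$.

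For the superadditivity I would induct on $\mathrm{GeomDim}(A)+\mathrm{GeomDim}(B)$. The base case, with one of the two dimensions zero, is immediate. The induction step rests on the standard join decomposition $\Sigma_{(a,b)}(A\times B)=\Sigma_{a}A\ast\Sigma_{b}B$ for the space of directions of a CAT(0) product, coupled with the parallel estimate $\mathrm{GeomDim}(S_1\ast S_2)\geq\mathrm{GeomDim}(S_1)+\mathrm{GeomDim}(S_2)+1$ for CAT(1) spherical joins. That join inequality itself yields to an analogous induction, since the space of directions at an interior point of a spherical join decomposes as a spherical join of the factors' direction spaces together with an extra $S^0$ accounting for the join parameter. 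Choosing $a$ and $b$ with $\mathrm{GeomDim}(\Sigma_{a}A)\geq\mathrm{GeomDim}(A)-1$ and $\mathrm{GeomDim}(\Sigma_{b}B)\geq\mathrm{GeomDim}(B)-1$ from the inductive definition of $\mathrm{GeomDim}$ then closes the argument.

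\textbf{Part (2).} Here I would combine two classical facts. The hypothesis $\mathrm{asrk}(X)\leq 1$ means every asymptotic cone of $X$ has geometric dimension at most~$1$. By a theorem of Kleiner, a CAT(0) space of geometric dimension at most~$1$ is an $\mathbb{R}$-tree, so every asymptotic cone of $X$ is an $\mathbb{R}$-tree, and by Gromov's asymptotic-cone characterization of hyperbolicity $X$ must then be Gromov hyperbolic. Granting the product and join formulas, part~(1) is an almost formal induction; the substantive inputs both live in part~(2), namely Kleiner's structure theorem for CAT(0) spaces of geometric dimension~$\leq 1$ and Gromov's tree characterization of hyperbolicity. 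These are the likely main obstacles if one insisted on full self-contained proofs, but I would simply cite them.
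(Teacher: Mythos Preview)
The paper does not actually prove this theorem: immediately after the statement it simply records that assertion~(1) follows from Theorem~A of \cite{kleiner1999local} and assertion~(2) from Corollary~1.3 of \cite{wenger2007asymptotic}. So there is no argument in the paper to compare against; you are supplying one where the authors only cite.

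Your sketch for Part~(2) is a correct alternative to invoking Wenger. Asymptotic cones of CAT(0) spaces are CAT(0), Kleiner's work gives that a CAT(0) space of geometric dimension at most~$1$ is an $\mathbb{R}$-tree, and Gromov's cone characterisation of hyperbolicity then finishes. This is a different route from Wenger's isoperimetric argument, but it is standard and sound.

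There is, however, a genuine gap in your Part~(1). After the product formula for cones you write that ``the parameters can be chosen so that the two factors on the right have geometric dimension at least $m$ and $n$''. This is exactly the non-obvious step. Knowing $\mathrm{asrk}(X)\ge m$ gives you \emph{some} choice of ultrafilter $\omega_1$, scales $(c_k)$ and basepoints for which $\mathrm{GeomDim}\bigl(\mathrm{Cone}_{\omega_1}(X)\bigr)\ge m$; knowing $\mathrm{asrk}(Y)\ge n$ gives you a possibly \emph{different} $\omega_2$ and scales $(e_k)$ achieving $\mathrm{GeomDim}\ge n$ for $Y$. But the product formula $\mathrm{Cone}(X\times Y)\cong\mathrm{Cone}(X)\times\mathrm{Cone}(Y)$ requires the \emph{same} ultrafilter and the \emph{same} scaling sequence on both sides. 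Nothing in your argument explains why a common choice of $(\omega,(d_k))$ exists for which both factor cones simultaneously have the target geometric dimensions, and this is not automatic for general CAT(0) spaces. Your inductive proof of $\mathrm{GeomDim}(A\times B)\ge\mathrm{GeomDim}(A)+\mathrm{GeomDim}(B)$ is fine, but it only kicks in once the simultaneity is established. One way to close the gap is to pass through the quasi-flat characterisation (quasi-isometric embeddings of $\mathbb{R}^m$ and $\mathbb{R}^n$ combine to give one of $\mathbb{R}^{m+n}$ into $X\times Y$, which forces $\mathrm{asrk}(X\times Y)\ge m+n$ since the cone then contains a bilipschitz copy of $\mathbb{R}^{m+n}$); alternatively one can invoke Kleiner's results directly, as the paper does.
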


The first assertion follows from Theorem A of
\cite{kleiner1999local} and the second assertion follows
from Corollary 1.3 of \cite{wenger2007asymptotic}.

\begin{defn}
If $G$ is a CAT(0)
group acting properly and cocompactly on a CAT(0) space $X$, then the
\textit{asymptotic rank} of $G$ is the asymptotic rank of $X$.
By \cite[Theorem~C]{kleiner1999local} this is the maximal $n$
for which there is a quasi-isometric embedding $\R^n\to X$. Hence it does not depend on the choice
of the CAT(0) space~$X$.
\end{defn}

\begin{lem}
\label{rank} Suppose that $G$ is a $\mathrm{CAT(0)}$ group, and
that $G$ acts properly and cocompactly on a contractible
$n$-dimensional cell complex $X$ (not necessarily $\mathrm{CAT(0)}$). Then the asymptotic rank of
$G$ is~$\le n$.
\end{lem}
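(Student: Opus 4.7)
The plan is to combine Kleiner's Theorem~C (cited above) with a topological-dimension bound on an asymptotic cone of $X$. Theorem~C identifies $\mathrm{asrk}(G)$ with the largest $k$ for which $\R^k$ admits a quasi-isometric embedding into a CAT(0) space $Y$ on which $G$ acts geometrically. Since both $Y$ and $X^{(1)}$ are quasi-isometric to $G$ via the respective orbit maps (using properness and cocompactness), the same maximum $k$ is realized by quasi-isometric embeddings $\R^k\to X^{(1)}$, with $X^{(1)}$ carrying its path metric.

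Assume for contradiction that there is a quasi-isometric embedding $f\colon\R^{n+1}\to X^{(1)}$. Passing to an asymptotic cone $X_\omega$ of $X^{(1)}$, the map $f$ yields a bi-Lipschitz embedding $\R^{n+1}\hookrightarrow X_\omega$, so the topological dimension of $X_\omega$ is at least $n+1$. The core of the argument is then the matching inequality $\dim(X_\omega)\le n$. To establish it, I would use cocompactness of the $G$-action to produce a $G$-invariant, locally finite open cover of $X$ by star neighborhoods of cells of multiplicity at most $n+1$ (a point of $X$ lies in the open stars of at most $n+1$ cells, one of each dimension, because $\dim X=n$). Rescaling and passing to the ultralimit of these covers then yields open covers of $X_\omega$ of multiplicity at most $n+1$ at arbitrarily small scales, and by the Lebesgue covering-dimension criterion this forces $\dim(X_\omega)\le n$, contradicting the presence of the bi-Lipschitz $\R^{n+1}$.

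The principal obstacle is the dimension bound $\dim(X_\omega)\le n$. Contractibility of $X$ is essential here: the Cayley graph of $\Z^2$ is a $1$-dimensional, non-contractible complex on which $\Z^2$ acts geometrically, but its asymptotic cone is bi-Lipschitz to $\R^2$, so the naive inequality $\dim(X_\omega)\le\dim(X)$ is false without contractibility. Cocompactness furnishes the uniformly bounded geometry needed for the rescaling argument, while contractibility is what permits the star cover to be coherently refined at all scales so that the multiplicity bound is preserved in the ultralimit.
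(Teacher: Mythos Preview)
Your outline has a genuine gap at the step you yourself call the principal obstacle, namely the inequality $\dim(X_\omega)\le n$. The mechanism you propose does not establish it. First, the multiplicity claim is mis-stated: a point in the interior of a top cell $\tau$ lies in the open star of \emph{every} face of $\tau$, and an $n$-cell typically has far more than $n+1$ faces; after simplicial subdivision the vertex-star cover does have multiplicity $\le n+1$, but that is a cover at scale $\sim 1$. Under rescaling by factors $\lambda_i\to\infty$ such a cover has mesh tending to $0$ and contributes nothing to the ultralimit; to bound $\dim(X_\omega)$ by Lebesgue's criterion you would need, for every $R$, covers of $X$ of multiplicity $\le n+1$ and mesh $\sim R$. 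You assert that contractibility lets one ``coherently refine at all scales'', but this is precisely the missing argument, and your own $\Z^2$ example shows it is not automatic: the Cayley graph of $\Z^2$ also has a multiplicity-$2$ vertex-star cover at scale $1$, yet its cone is $\R^2$. Nothing in your sketch distinguishes the two situations.

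The paper avoids the cone-dimension question altogether via locally finite homology. Using contractibility and cocompactness one promotes a quasi-isometric embedding $f\colon\R^k\to X$ (with $k>n$) and a quasi-inverse $g$ defined on the image subcomplex $Y$ to \emph{continuous proper} maps, with $g\circ f$ properly homotopic to the identity. Then $f_*\colon H^{\mathrm{lf}}_k(\R^k)\to H^{\mathrm{lf}}_k(Y)$ is injective, contradicting $H^{\mathrm{lf}}_k(Y)=0$ since $\dim Y\le n<k$. Here contractibility is used concretely---to extend $f$ and $g$ cell by cell---rather than as an unspecified large-scale refinement principle. If you want to salvage a cone-based approach you would need something like a uniform filling/acyclicity argument producing bounded-multiplicity covers at every scale, which is substantially more work than the homology argument.
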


\begin{proof}
Choose any $G$-equivariant length metric on $X$. We will prove that
there does not exist a quasi-isometric embedding $f:\R^k\to X$
for $k>n$. Otherwise, since $X$ is contractible and admits a
cocompact action of~$G$, we can assume that $f$ is a continuous
quasi-isometry: such $f$ can be defined by induction on
consecutive skeleta of the standard cubical subdivision of
$\R^k$.

Let $Y\subseteq X$ be the smallest subcomplex containing
$f(\R^k)$. Then $f:\R^k\to Y$ is a quasi-isometry. Let
$g:Y\to\R^k$ be a quasi-isometry inverse to $f$, we can again
assume that $g$ is continuous. For any $x\in\R^k$ the distance
$d(g\circ f(x),x)$ is uniformly bounded and consequently there
is a proper geodesic homotopy between $g\circ f$ and the
identity map.

Recall that for a topological space $X$ we can consider
\emph{locally finite chains} in~$X$, which are formal sums
$\Sigma_{\lambda\in\Lambda}a_{\lambda}\sigma_{\lambda}$ where
$a_{\lambda}$ are integers, $\sigma_{\lambda}$ are singular
simplices, and any compact set in $X$ intersects the images of only
finitely many $\sigma_{\lambda}$ with $a_{\lambda}\neq 0$. This
gives rise to \textit{locally finite homology} of $X$, denoted
by $H^{\mathrm{lf}}_{*}(X)$. Moreover, proper maps induce
homomorphisms on locally finite homology. See \cite[Section
2.2]{bestvina2008quasiflats} for more discussion.

Since there is a proper geodesic homotopy between $g\circ f$ and
the identity map, $g\circ f$ induces the identity on
$H^{\mathrm{lf}}_{*}(\R^k)$, and consequently
$f_{*}\colon H^{\mathrm{lf}}_k(\R^k)\to
H^{\mathrm{lf}}_k(Y)$ is injective. This leads to a
contradiction, since $H^{\mathrm{lf}}_k(\R^k)$ contains the
fundamental class $[\R^k]$ which is a nontrivial element, while
$H^{\mathrm{lf}}_k(Y)=0$ since $\dim(Y)<k$.
\end{proof}

\subsection{Gate and parallel set}
All CAT(0) cube complexes in our article are
finite-dimensional. Throughout this paper the only metric that
we consider on a CAT(0) cube complex $X$ is the CAT(0) metric
$d$. The \emph{convex hull} of a subspace $Y\subseteq X$ is the
smallest convex subspace containing $Y$, and is not necessarily
a subcomplex, while the \emph{combinatorial convex hull} of $Y$
is the smallest convex subcomplex of $X$ containing $Y$. For a
complete convex subspace $Y\subseteq X$ we denote by $\pi_Y\colon X\to
Y$ the closest point projection onto $Y$.

The following lemma was proved in slightly different contexts
by various authors
\cite{behrstock2014hierarchically,huang2014top,MR2421136,abramenko2008buildings}:

\begin{lem}\cite[Lemma 2.10]{huang2014top}
\label{gate} Let $X$ be a $\mathrm{CAT(0)}$ cube complex of
dimension~$n$, and let $Y_{1}$, $Y_{2}$ be convex subcomplexes.
Let $\Delta=d(Y_{1},Y_{2})$, $V_{1}=\{y\in Y_{1}\mid
d(y,Y_{2})=\Delta\}$ and $V_{2}=\{y\in Y_{2}\mid
d(y,Y_{1})=\Delta\}$. Then:
\begin{enumerate}
\item $V_{1}$ and $V_{2}$ are nonempty convex
    subcomplexes.
\item $\pi_{Y_{1}}$ maps $V_{2}$ isometrically onto $V_{1}$
    and $\pi_{Y_{2}}$ maps $V_{1}$ isometrically onto
    $V_{2}$. Moreover, the convex hull of $V_{1}\cup V_{2}$
    is isometric to $V_{1}\times [0,\Delta]$.
\item for every $\epsilon>0$ there exists
    $\delta=\delta(\Delta,n,\epsilon)$ such that if
    $y_{1}\in Y_{1}$, $y_{2}\in Y_{2}$ and
    $d(y_{1},V_{1})\geq \epsilon$, $d(y_{2},V_{2})\geq
    \epsilon$, then
\begin{equation*}
\label{2.11}
d(y_{1}, Y_{2})\geq \Delta + \delta d(y_{1},V_{1})\,,\ d(y_{2}, Y_{1})\geq \Delta + \delta d(y_{2},V_{2})\,.
\end{equation*}
\end{enumerate}
\end{lem}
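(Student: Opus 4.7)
My plan is to establish (1), (2), (3) in order, using respectively the convexity of the distance function, the CAT(0) flat quadrilateral theorem, and the hyperplane combinatorics specific to cube complexes.

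For (1), convexity of $V_1$ is automatic: the function $y\mapsto d(y,Y_2)$ is convex on the CAT(0) space $X$, so its restriction to $Y_1$ is convex, and the set where it attains the minimum value $\Delta$ is a convex subset of $Y_1$. To see that the infimum is realized and that $V_1$ is a subcomplex, I would use that only finitely many hyperplanes separate $Y_1$ from $Y_2$ (bounded by a multiple of $\Delta$ via the $\sqrt{n}$ comparison between the CAT(0) and $\ell^1$ metrics) and run a combinatorial bridge argument between $Y_1$ and $Y_2$. On each cube of $Y_1$ the convex function $d(\cdot,Y_2)$ is piecewise linear in natural coordinates, so its minimum on that cube is attained on a face, confirming that $V_1$ is a subcomplex.

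For (2), pick $y_1\in V_1$, let $y_2=\pi_{Y_2}(y_1)$, and note that $d(y_1,y_2)=\Delta$ is the global minimum of the distance between $Y_1$ and $Y_2$, so $y_1$ is in turn the closest point of $Y_1$ to $y_2$; hence $y_2\in V_2$ and $\pi_{Y_1}(y_2)=y_1$. This yields mutually inverse bijections $V_1\leftrightarrow V_2$. For $y_1,y_1'\in V_1$ with images $y_2,y_2'\in V_2$, the quadrilateral with vertices $y_1,y_1',y_2',y_2$ has all four angles at least $\pi/2$ by the first-variation formula for closest-point projection onto a convex set; the CAT(0) flat quadrilateral theorem then promotes it to a flat rectangle. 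Assembling these rectangles shows that $\pi_{Y_2}$ restricted to $V_1$ is an isometry, and that the convex hull of $V_1\cup V_2$ is isometric to $V_1\times[0,\Delta]$.

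For (3), I expect the cube complex structure to be essential. Let $\mathcal{H}$ be the set of hyperplanes separating $y_1$ from $Y_2$. Those also separating $V_1$ from $V_2$ account for the minimum $\Delta$ in the $\ell^1$ metric, while if $d(y_1,V_1)\geq\epsilon$ then a controlled number $N$ of additional hyperplanes in $\mathcal{H}$ must separate $y_1$ from $V_1$ while missing $Y_2$. Since any CAT(0) geodesic from $y_1$ to a point of $Y_2$ crosses all such hyperplanes transversely, combining this count with the $\sqrt{n}$ comparison between $\ell^1$ and CAT(0) distances should yield the desired linear estimate $d(y_1,Y_2)\geq\Delta+\delta\,d(y_1,V_1)$. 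The main obstacle here is isolating the coefficient $\delta$ with controlled dependence on $\Delta$, $n$, $\epsilon$: the naive hyperplane count only gives $d(y_1,Y_2)-\Delta\geq c(n)\cdot N$, and one still has to quantitatively relate $N$ to $d(y_1,V_1)$ in a way compatible with the passage between combinatorial and CAT(0) distances, and that handles geodesics which may first recede from $Y_2$ before approaching it. The finite dimension $n$ enters precisely at this step.
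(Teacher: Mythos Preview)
The paper does not supply its own proof of this lemma: it is quoted verbatim from \cite[Lemma~2.10]{huang2014top} and attributed to several sources, so there is nothing in this paper to compare your argument against. What follows is an assessment of your sketch on its own terms.

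Your treatment of (1) and (2) is the standard one and is essentially complete. For (1), the convexity of $y\mapsto d(y,Y_2)$ gives convexity of $V_1$ immediately; the subcomplex assertion is most cleanly obtained by the combinatorial description (a vertex $v\in Y_1$ lies in $V_1$ iff every hyperplane separating $v$ from $Y_2$ already separates $Y_1$ from $Y_2$), which makes $V_1$ an intersection of $Y_1$ with halfspaces. Your piecewise-linear remark is correct but less direct. For (2), the flat quadrilateral argument is exactly the right tool.

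For (3) you correctly identify the difficulty but stop short of resolving it. The issue is real: the naive hyperplane count gives a lower bound on the $\ell^1$ distance $d_1(y_1,Y_2)$, not on the CAT(0) distance, and the comparison $d\ge d_1/\sqrt{n}$ applied to $d(y_1,Y_2)$ does \emph{not} by itself yield $d(y_1,Y_2)\ge \Delta + \delta\cdot d(y_1,V_1)$, because the additive $\Delta$ is a CAT(0) quantity while the gain from extra hyperplanes is an $\ell^1$ quantity, and these do not simply add. One needs an argument that controls how the geodesic from $y_1$ to $\pi_{Y_2}(y_1)$ interacts with the product region $V_1\times[0,\Delta]$, or alternatively a direct CAT(0) comparison (e.g.\ via the angle at $\pi_{V_1}(y_1)$ in the triangle $y_1,\pi_{V_1}(y_1),\pi_{Y_2}(y_1)$ together with the law of cosines in the comparison triangle). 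As written, your part (3) is a plausible outline but not a proof; you have honestly flagged the gap yourself.
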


We call $V_{1}\subseteq Y_1$ the \textit{gate with respect to~$Y_{2}$}, and $V_{2}\subseteq Y_2$ the \textit{gate with
respect to~$Y_{1}$}. We write $\G(Y_{1},Y_2)=(V_1,V_2)$. We say
that $Y_1,Y_2$ are \emph{parallel} if
$\G(Y_{1},Y_2)=(Y_1,Y_2)$.

\begin{lem}[{\cite[Lemma~2.9]{huang2014quasi}}]
\label{combinatorial description} Let $X$ be a
$\mathrm{CAT(0)}$ cube complex, and let $(V_{1}, V_{2}) =
\G(Y_{1},Y_{2})$ for some convex subcomplexes $Y_1,Y_2\subseteq
X$. Let $e$ be an edge in $V_{1}$ and let $h$ be the hyperplane
dual to $e$. Then $h\cap V_{2}\neq\emptyset$.
\end{lem}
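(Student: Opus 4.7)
The plan is to leverage the product decomposition from Lemma~\ref{gate}(2) via a half-space argument. Let $v_1,v_2$ denote the endpoints of $e$, and set $w_i:=\pi_{Y_2}(v_i)\in V_2$. It suffices to show that $w_1$ and $w_2$ lie on opposite sides of $h$: since $V_2$ is convex, the geodesic from $w_1$ to $w_2$ stays in $V_2$ and, connecting opposite half-spaces, must cross $h$, yielding $h\cap V_2\neq\emptyset$.

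By Lemma~\ref{gate}(2), the convex hull $C$ of $V_1\cup V_2$ is isometric to $V_1\times[0,\Delta]$, with $v_i$ corresponding to $(v_i,0)$ and $w_i$ to $(v_i,\Delta)$. Inside $C$, the subspace $e\times[0,\Delta]$ is an isometrically embedded Euclidean rectangle of dimensions $1\times\Delta$. The hyperplane $h$ of $X$, being dual to $e$, meets this rectangle perpendicular to $e$ along $\{m\}\times[0,\Delta]$, where $m$ is the midpoint of $e$. In particular, $h$ avoids each fiber $\{v_i\}\times[0,\Delta]$, so $h$ does not separate $v_i$ from $w_i$. Combined with the fact that $h$ does separate $v_1$ from $v_2$, this forces $w_1$ and $w_2$ onto opposite sides of $h$, completing the argument.

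The main technical point is to justify that, inside the Euclidean rectangle $e\times[0,\Delta]$, the hyperplane $h$ appears as the perpendicular bisector $\{m\}\times[0,\Delta]$. This uses the fact that in a flat rectangle, a codimension-one convex subspace meeting one side at a single interior point must run perpendicularly to that side, and the edge $e\subseteq V_1\times\{0\}$ is perpendicular to the fibers $\{v\}\times[0,\Delta]$ by the product structure. Alternatively, one can avoid the geometric perpendicularity argument and use a more combinatorial substitute: the set of hyperplanes separating a vertex $v\in V_1$ from $Y_2$ depends only on $V_1$ (not on $v$), and since $h$ crosses $V_1$ it cannot belong to this set; hence $v_i$ and $w_i$ lie on the same side of $h$ for each~$i$, yielding the same conclusion.
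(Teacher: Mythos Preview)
The paper does not supply its own proof of this lemma; it is quoted directly from \cite[Lemma~2.9]{huang2014quasi} with no argument given. So there is no ``paper's proof'' to compare against.

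Your argument is correct in outline and in detail. Projecting the endpoints of $e$ to $w_1,w_2\in V_2$ and showing that $h$ separates $w_1$ from $w_2$ is exactly the right idea, and your conclusion (the geodesic $w_1w_2\subset V_2$ must then cross $h$) is valid. One point in the second paragraph could use a line of justification: the claim that $h$ avoids each fiber $\{v_i\}\times[0,\Delta]$ is not automatic from Lemma~\ref{gate}(2), since that isometry $C\cong V_1\times[0,\Delta]$ is only a CAT(0) isometry and need not respect the cubical structure. The quickest fix is an angle argument: in the carrier $N_h\cong h\times[0,1]$ the edge $e$ is orthogonal to $h$, so the Alexandrov angle at $m$ between the $e$-direction and any geodesic into $h$ equals~$\pi/2$; but if $h$ met a fiber $\{v_i\}\times(0,\Delta)$ at some point $p$, the same angle computed inside the flat rectangle would be strictly less than~$\pi/2$, a contradiction. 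Your combinatorial alternative in the third paragraph is equally valid and arguably cleaner; the one fact it leans on---that the hyperplanes separating a vertex $v\in V_1$ from $Y_2$ are exactly those separating $Y_1$ from $Y_2$, independently of $v$---is standard for gates in CAT(0) cube complexes and is essentially what the cited reference establishes.
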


\begin{lem}[{\cite[Lemma~2.5]{caprace2011rank}}]
\label{product decomposition cube} A decomposition of a
$\mathrm{CAT(0)}$ cube complex as a product of
$\mathrm{CAT(0)}$ cube complexes corresponds to a partition
$\h_1\sqcup\h_2$ of the collection of hyperplanes of $X$ such
that every hyperplane in $\h_1$ intersects every hyperplane
in~$\h_2$.
\end{lem}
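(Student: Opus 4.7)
The forward implication is immediate: if $X=X_1\times X_2$, then the hyperplanes of $X$ partition into those of the form $H\times X_2$ for $H$ a hyperplane of $X_1$ and those of the form $X_1\times H$ for $H$ a hyperplane of $X_2$, and every pair consisting of one of each type crosses in $X$. So I would set $\h_i$ to be the hyperplanes of the $i$-th type.

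For the converse, given a partition $\h_1\sqcup\h_2$ satisfying the crossing hypothesis, I would reconstruct candidate factor complexes directly from the hyperplane data. For $i=1,2$, declare vertices $u,v\in X^{(0)}$ to be $\h_i$-\emph{equivalent} if no hyperplane in $\h_i$ separates them, take $X_i^{(0)}$ to be the set of equivalence classes, join two classes by an edge when they differ by exactly one hyperplane of $\h_i$, and span a cube whenever the associated hyperplanes pairwise cross. Then I would define
\[
\Phi\colon X\to X_1\times X_2,\qquad v\mapsto\bigl([v]_1,[v]_2\bigr),
\]
extend $\Phi$ cubically, and show it is a cubical isomorphism.

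The three steps are: (a) injectivity on $X^{(0)}$ is immediate because $[v]_1=[w]_1$ and $[v]_2=[w]_2$ force $v$ and $w$ to lie on the same side of every hyperplane of $X$, so $v=w$; (b) surjectivity on $X^{(0)}$ requires showing that any pair $(u_1,u_2)\in X_1^{(0)}\times X_2^{(0)}$ lifts to a consistent orientation of the entire hyperplane set of $X$, which, via Sageev's correspondence between consistent DCC orientations and vertices, defines a vertex of $X$; (c) the cube structure matches because in both $X$ and $X_1\times X_2$ the cubes incident to a vertex are indexed by pairwise-crossing collections of hyperplanes.

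I expect (b) to be the heart of the argument. Consistency within each $\h_i$ is inherited from $u_i$ being a vertex of $X_i$, and the DCC condition follows by comparing with a basepoint $x_0\in X^{(0)}$, whose restrictions to $\h_1$ and $\h_2$ differ from $u_1,u_2$ in only finitely many hyperplanes. The only potential obstruction is between halfspaces from $\h_1$ and $\h_2$, and this is exactly where the crossing hypothesis enters: two halfspaces whose bounding hyperplanes cross must intersect, so the chosen halfspaces on $\h_1$ and $\h_2$ are automatically jointly consistent, and no pair drawn from opposite sides of the partition can be disjoint.
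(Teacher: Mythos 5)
The paper does not prove this lemma; it is quoted as Lemma~2.5 of Caprace--Sageev, and your argument is essentially the standard proof from that source: rebuild each factor as the restriction quotient of $X$ to $\h_i$ and use Sageev's ultrafilter correspondence, with the crossing hypothesis supplying joint consistency of the two coordinates in the surjectivity step. Your outline is correct; the only points left implicit (that a consistent orientation differing from a vertex's principal orientation on finitely many hyperplanes is itself a vertex, and that each restriction quotient is again a CAT(0) cube complex) are standard.
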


\begin{lem}
\label{parallel set} Let $X$ be a $\mathrm{CAT(0)}$ cube
complex and let $Y\subseteq X$ be a convex subcomplex. Let
$\{Y_{\lambda}\}_{\lambda\in\Lambda}$ be the collection of all
convex subcomplexes that are parallel to $Y$. Then the
combinatorial convex hull $P_Y$ of
$\bigcup_{\lambda\in\Lambda}Y_{\lambda}$ admits a natural
product decomposition $P_Y=Y\times Y^{\perp}$.
\end{lem}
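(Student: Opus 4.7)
The plan is to apply Lemma \ref{product decomposition cube} by exhibiting an appropriate bipartition of the hyperplanes of $P_Y$, and then to identify one of the resulting factors with~$Y$.

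First I would observe that any two parallel convex subcomplexes of $X$ are crossed by exactly the same collection of hyperplanes. Indeed, if $Y$ and $Y_\lambda$ are parallel then $\G(Y, Y_\lambda) = (Y, Y_\lambda)$, and Lemma \ref{combinatorial description} applied in either order shows that each hyperplane dual to an edge of one meets (and hence is dual to an edge of) the other. Let $\h_1$ denote the common collection of hyperplanes crossing all $Y_\lambda$, and let $\h_2$ denote the hyperplanes that cross $P_Y$ but not $Y$. These two sets are disjoint and together exhaust the hyperplanes of $P_Y$.

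The main step is to verify that every $h\in\h_1$ crosses every $h'\in\h_2$. Since $h'$ does not cross any $Y_\lambda$, each $Y_\lambda$ has all its vertices on one open side of $h'$. I would then argue that there must be $Y_\lambda$'s on both sides, for otherwise the whole combinatorial convex hull $P_Y$ would lie on one closed side of $h'$, contradicting that $h'$ is dual to an edge of $P_Y$. Picking $Y_{\lambda_1}$ and $Y_{\lambda_2}$ on opposite sides and using that $h\in\h_1$ crosses both, I obtain points of $h$ on both sides of $h'$; connectivity of $h$ then forces $h\cap h'\neq\emptyset$, so the two hyperplanes cross.

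Lemma \ref{product decomposition cube} now provides a product decomposition $P_Y = X_1\times X_2$ with the hyperplanes of $X_i$ being $\h_i$. To conclude, I fix a vertex $y_0\in Y$; since $Y$ is connected and does not cross any hyperplane in $\h_2$, its image under the projection $P_Y\to X_2$ is a single vertex $q$, so $Y\subseteq X_1\times\{q\}$. Conversely $Y$ crosses every hyperplane in $\h_1$, so any vertex of the slice $X_1\times\{q\}$ not in $Y$ would be separated from $Y$ by some hyperplane in $\h_1$, which is impossible. Thus $Y = X_1\times\{q\}$, and setting $Y^\perp = \{q\}\times X_2$ yields the desired decomposition $P_Y = Y\times Y^\perp$. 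The main obstacle I expect is the connectivity step ruling out that $h'$ keeps all $Y_\lambda$ on a single side of $h'$, which relies on the standard characterization of the combinatorial convex hull as the intersection of all closed halfspaces containing the given set.
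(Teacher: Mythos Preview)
Your proof is correct and follows essentially the same approach as the paper: partition the hyperplanes of $P_Y$ into those crossing $Y$ (hence, by Lemma~\ref{combinatorial description}, all $Y_\lambda$) and the rest, argue that any hyperplane in the latter class separates two of the $Y_\lambda$ and therefore crosses every hyperplane in the former, and invoke Lemma~\ref{product decomposition cube}. You are somewhat more explicit than the paper in justifying that one factor is identified with $Y$ itself, but the argument is the same.
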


$P_Y$ is called the \emph{combinatorial parallel set} of $Y$.

\begin{proof}
Let $\h$ be the collection of hyperplanes in $X$ that separate
some points in $\bigcup_{\lambda\in\Lambda}Y_{\lambda}$ and let
$h\in\h$. We claim that either $h$ intersects all $Y_{\lambda}$ or it is disjoint from all $Y_{\lambda}$. Indeed, we have
$\G(Y,Y_{\lambda})=(Y,Y_{\lambda})$ for all
$\lambda\in\Lambda$. It follows from Lemma \ref{combinatorial
description} that if $h$ intersects some $Y_{\lambda}$, then it intersects $Y$, and
hence it intersects all $Y_{\lambda}$.

Let $\h_1$ and $\h_2$ be the collections of hyperplanes
satisfying the first assertion and the second assertion in the
claim, respectively. For any $h\in\h_2$, there exist
$\lambda,\lambda'\in\Lambda$ such that $h$ separates
$Y_{\lambda}$ from $Y_{\lambda'}$. Thus $h$ intersects every
hyperplane in $\h_1$. Note that $\h$ is the collection of
hyperplanes that intersect $P_Y$ and $\h_1$ is the collection
of hyperplanes that intersect $Y$. Thus by Lemma \ref{product
decomposition cube}, $P_{Y}$ admits a product decomposition
$P_Y=Y\times Y^{\perp}$.
\end{proof}

\section{Cocompact cores}\label{sec:cube complexes} The main goal of this section is to
prove Theorem~\ref{product of hyperbolic} on existence of
cocompact cores for top rank products of hyperbolic groups. The
first step towards it is to study flats in a CAT(0) cube
complex, which we do in Section~\ref{hyperplane intersect
flat}. A hurried reader can proceed directly to
Section~\ref{sec:product} and use \cite[Theorem
2.6]{WiseWoodhouse} instead. However, our Theorem~\ref{flat} is of independent interest.

\subsection{Combinatorial convex hull of a flat}
Throughout this paper a \emph{flat} is a CAT(0) flat,
i.e.\ an isometrically embedded copy of $\R^n$, not
necessarily combinatorial. A \emph{half-flat} is an
isometrically embedded copy of $\R^{n-1}\times [0,\infty)$.

\begin{lem}
\label{hyperplane intersect flat} Let $X$ be a
$\mathrm{CAT(0)}$ cube complex and let $F\subseteq X$ be a
flat. Let $h$ be a hyperplane in $X$ intersecting $F$, and let $h^{+}$ and $h^{-}$ be the halfspaces
of $h$. Then either $F\subseteq h$, or $h\cap F$ is a
codimension-$\mathrm{1}$ flat in $F$. In the latter case, both $h^{+}\cap
F$ and $h^{-}\cap F$ are half-flats.
\end{lem}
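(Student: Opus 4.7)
My plan is to use the carrier $N(h)$, the union of cubes of $X$ meeting $h$, which is a convex subcomplex isometric (in the CAT(0) metric) to the product $h \times [-\frac{1}{2}, \frac{1}{2}]$. Let $\tau \colon N(h) \to [-\frac{1}{2}, \frac{1}{2}]$ denote projection onto the second factor, so that $h = \tau^{-1}(0)$ and $h^{\pm} \cap N(h) = \{x \in N(h) : \pm\tau(x) \geq 0\}$.

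The key step is to show that $\tau$ is affine on the convex set $F \cap N(h)$. Indeed, for any $p, q \in F \cap N(h)$, their midpoint in $F$ (which is isometric to $\R^n$) coincides with their midpoint in $X$ and hence in $N(h)$; since midpoints in a product CAT(0) space are computed componentwise, the value of $\tau$ at this midpoint equals $\frac{1}{2}(\tau(p) + \tau(q))$. Thus $\tau|_{F \cap N(h)}$ preserves midpoints and, being continuous, is affine. Moreover, any $p_0 \in F \cap h$ (which exists by hypothesis) lies in the interior of $F \cap N(h)$ inside $F$: every cube of $X$ containing $p_0$ also meets $h$ and is therefore in $N(h)$, so a neighbourhood of $p_0$ in $F$ is contained in $F \cap N(h)$. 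Hence $\tau|_{F \cap N(h)}$ extends uniquely to an affine function $\tilde\tau \colon F \to \R$.

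Next I would verify that $F \cap N(h) = \{x \in F : |\tilde\tau(x)| \leq \frac{1}{2}\}$. The inclusion $\subseteq$ is immediate. For the converse, given $x \in F$ with $|\tilde\tau(x)| \leq \frac{1}{2}$, join $x$ to $p_0$ by the segment $\alpha$ in $F$ parametrized as $\alpha(t) = (1-t)x + tp_0$. By convexity of $F \cap N(h)$, the preimage $\alpha^{-1}(F \cap N(h))$ is an interval $[a, 1]$ for some $a \geq 0$. If $a > 0$, then $\alpha(a)$ lies on the boundary of $N(h)$ in $X$, so $\tau(\alpha(a)) = \pm\frac{1}{2}$; since $\tilde\tau$ is affine along $\alpha$, this gives $(1-a)\tilde\tau(x) = \pm\frac{1}{2}$, whence $|\tilde\tau(x)| > \frac{1}{2}$, a contradiction. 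So $a = 0$ and $x \in F \cap N(h)$. It follows that $F \cap h = \{x \in F : \tilde\tau(x) = 0\}$, which is either all of $F$ (when $\tilde\tau \equiv 0$, so $F \subseteq h$) or, as the zero set of a nonconstant affine function on $F \cong \R^n$, a codimension-1 affine subspace — a codimension-1 flat. Similarly $h^{\pm} \cap F = \{x \in F : \pm\tilde\tau(x) \geq 0\}$ are closed affine half-spaces of $F$, hence half-flats. The principal difficulty is the affineness of $\tau$ on $F \cap N(h)$, which relies crucially on both the flatness of $F$ and the product CAT(0) structure of $N(h)$.
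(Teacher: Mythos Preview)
Your proof is correct and rests on the same key ingredient as the paper's: the product structure $N(h)\cong h\times[-\tfrac12,\tfrac12]$ of the carrier. The paper's argument is much terser---it observes that $h\cap F$, $h^{+}\cap F$, and $h^{-}\cap F$ are all convex in $F\cong\R^{n}$, and that the carrier structure forces $h\cap F$ to be a codimension-$1$ submanifold, leaving to elementary convex geometry the conclusion that two closed convex sets covering $\R^{n}$ with convex codimension-$1$ intersection must be complementary half-spaces. You instead make the mechanism explicit by extending the height $\tau$ to a global affine function $\tilde\tau$ on $F$ and reading off $h\cap F$ and $h^{\pm}\cap F$ as its level and sublevel sets. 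Your route is more self-contained (no appeal to a separation fact in $\R^{n}$) and makes transparent why flatness of $F$ is essential; the paper's route is shorter. Both are valid executions of the same idea.
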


\begin{proof} The carrier $N_h$ of $h$, which is its neighbourhood, has form
$N_h=h\times [0,1]$. Thus if $F\nsubseteq h$, then $h\cap F$ is a codimension-1
submanifold of $F$. Moreover, the intersections $h\cap F$,
$h^{+}\cap F$, and $h^{-}\cap F$ are convex, thus the lemma
follows.
\end{proof}

\begin{lem}
\label{hyperplane intersect ray} Let $h$ be a hyperplane in a
$\mathrm{CAT(0)}$ cube complex $X$. Suppose that $l$
is a geodesic ray in $X$ starting in $h$. If $l\nsubseteq h$, then
there exists another hyperplane~$h'$ in~$X$
intersecting $l$ and disjoint from $h$.
\end{lem}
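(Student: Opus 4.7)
The plan is to analyze the position of $l$ relative to the combinatorial parallel set $P_h = h \times h^\perp$ from Lemma~\ref{parallel set}, where the subspace $h$ sits as the slice $h \times \{p_0\}$ for the vertex $p_0 \in h^\perp$ that represents $h$ among its parallel copies. I will split into two cases based on whether $l$ is contained in $P_h$.

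In the case $l \subseteq P_h$, I would decompose $l = (l_1, l_2)$ with respect to the product. Since $l \nsubseteq h = h \times \{p_0\}$, the $h^\perp$-factor $l_2$ is a non-constant geodesic ray in $h^\perp$ starting at the vertex $p_0$, and hence crosses some hyperplane $v$ of $h^\perp$. The hyperplane $h \times v$ of the convex subcomplex $P_h$ extends to a unique hyperplane $h'$ of $X$; this $h'$ is crossed by $l$, and $h' \cap h = \emptyset$ since the vertex $p_0$ does not lie on any hyperplane of $h^\perp$.

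In the case $l \nsubseteq P_h$, convexity of $P_h$ makes $t \mapsto d(l(t), P_h)$ a convex function that vanishes at $0$ and is eventually positive, hence tends to $\infty$. I would pick $t_1$ large enough that the minimal cube $C$ containing $l(t_1)$ is disjoint from $P_h$. Applying Lemma~\ref{gate} to $C$ and $P_h$ provides gates $V_1 \subseteq C$, $V_2 \subseteq P_h$ with mutually inverse isometric projections; choosing a vertex $v_1 \in V_1$ and setting $v_0 = \pi_{P_h}(v_1) \in V_2$ (also a vertex, since projections of vertices of cube complexes onto convex subcomplexes are vertices), the $\mathrm{CAT(0)}$ geodesic $[v_1, v_0]$ has positive length and so is crossed by at least one hyperplane $h'$ of $X$. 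By the standard projection fact (hyperplanes separating a vertex $w$ from its combinatorial projection $\pi_Z(w)$ onto a convex subcomplex $Z$ are disjoint from $Z$), applied once with $(w,Z)=(v_1,P_h)$ and once with $(w,Z)=(v_0,C)$ via the mutual projection $v_1 = \pi_C(v_0)$ from Lemma~\ref{gate}(2), the hyperplane $h'$ is disjoint from both $C$ and $P_h$. By convexity, $C$ lies on the $v_1$-side of $h'$ and $P_h$ on the $v_0$-side, so $h'$ is disjoint from $h \subseteq P_h$ and separates $l(0) \in P_h$ from $l(t_1) \in C$, so it crosses $l$.

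The main obstacle is in Case~2: combining the mutual projection property of the gates with the projection fact to produce a single hyperplane that is simultaneously disjoint from $C$ and $P_h$, so that by convexity it globally separates the full cube from the full parallel set (rather than merely separating the chosen pair of vertices $v_0, v_1$).
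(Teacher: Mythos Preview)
Your argument is sound, but there is a technical wrinkle and a notable difference from the paper's proof.

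The wrinkle: Lemma~\ref{parallel set} is stated for a convex \emph{subcomplex} $Y\subseteq X$, while a hyperplane $h$ is not a subcomplex of $X$ (its cells are midcubes). So your invocation of $P_h=h\times h^\perp$ via that lemma is not literally licensed. The cleanest repair is to replace $P_h$ by the carrier $N_h=h\times[0,1]$, which \emph{is} a convex subcomplex. With this substitution your Case~1 becomes vacuous: a geodesic ray in the product $h\times[0,1]$ must project to a constant in the bounded $[0,1]$ factor, so $l\subseteq N_h$ together with $l(0)\in h$ forces $l\subseteq h$. Thus only your Case~2 survives, and it goes through verbatim with $N_h$ in place of $P_h$: Lemma~\ref{gate} applies to the pair $(C,N_h)$, the projections on the gates are mutual inverses by Lemma~\ref{gate}(2), and any hyperplane $h'$ separating $v_1$ from $v_0$ is disjoint from both $C$ and $N_h\supseteq h$, hence separates $l(0)$ from $l(t_1)$.

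The paper's proof takes a more elementary and direct route. It also works with $N_h$, but instead of invoking the gate lemma it lets $B$ be the \emph{first} cube outside $N_h$ whose interior meets $l$, and observes that some edge of $B$ at a vertex of $N_h\cap B$ must be dual to a hyperplane $h'$ missing $h$ (otherwise every such edge lies in $N_h$, forcing $B\subseteq N_h$ by convexity). To see that $l$ actually crosses $h'$, the paper argues by contradiction: if not, then $l\subseteq N_{h'}$ (since $l$ meets the interior of $N_{h'}$), whence $l(0)\in h\cap N_{h'}$ gives $h\cap h'\neq\emptyset$. This avoids both the parallel-set construction and Lemma~\ref{gate}. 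Your approach has the pleasant feature that the hyperplane you produce \emph{globally separates} $N_h$ from a far-away cube, which is conceptually clean; the paper's argument is shorter and uses nothing beyond convexity of carriers.
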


\begin{proof}
Let $N_h$ be the carrier of $h$. Let $B$ be the first cube
outside $N_h$ whose interior is intersected by $l$. We claim that there is a hyperplane $h'$ intersecting $B$ and disjoint from
$h$. Indeed, pick a vertex $v\in N_h\cap B$ and let $e$ be an edge of $B$
containing $v$. If the hyperplane dual to $e$ intersects $h$,
then $e\subset N_h$. If this holds for any $e$, then $B\subset
N_h$ by the convexity of $N_h$, which yields a contradiction.
This justifies the claim.

By the claim, there a hyperplane $h'$ intersecting $B$ and disjoint from
$h$. It remains to prove that $l$ intersects $h'$. Otherwise, since $l$ intersects the interior of the carrier~$N_{h'}$, we have that $l$ is contained in $N_{h'}$.
Since $l$ starts at $h$, we have that $h$ intersects~$N_{h'}$
and hence it also intersects $h'$, which is a contradiction.
\end{proof}

We will also use a consequence of a result of Haglund
\cite[Theorem 2.28]{haglund2008finite}.

\begin{thm}
\label{thm:Haglund} Let $X$ be a hyperbolic $\mathrm{CAT(0)}$
cube complex. Then any quasi-isometrically embedded subspace of
$X$ is at finite Hausdorff distance from its combinatorial
convex hull.
\end{thm}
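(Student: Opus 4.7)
The plan is to reduce the statement for an arbitrary subspace $Y\subseteq X$ to the case of a subcomplex, where the result follows directly from Haglund's Theorem 2.28 of \cite{haglund2008finite}. First I would replace $Y$ by the smallest subcomplex $Y'$ containing it, namely the union of all closed cubes of $X$ meeting $Y$. Since $X$ is finite-dimensional and cubes have uniformly bounded diameter, $Y'$ lies in a bounded neighbourhood of $Y$, so the quasi-isometric embedding hypothesis passes from $Y$ to $Y'$. Moreover any convex subcomplex containing $Y$ automatically contains every closed cube that meets $Y$, hence the combinatorial convex hulls of $Y$ and $Y'$ coincide. Thus it suffices to prove the statement for the subcomplex $Y'$.

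Next, since $X$ is Gromov hyperbolic and $Y'$ is quasi-isometrically embedded, a standard thin-triangles argument yields a constant $C$ such that every CAT(0) geodesic of $X$ with endpoints in $Y'$ stays within the $C$-neighbourhood of $Y'$; that is, $Y'$ is metrically quasiconvex. At this point I would invoke Haglund's Theorem 2.28, which asserts that a quasiconvex subcomplex of a hyperbolic CAT(0) cube complex lies at finite Hausdorff distance from its combinatorial convex hull. Combined with the equality of combinatorial convex hulls from the reduction step, this gives the conclusion for $Y$ itself.

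The main point of care is the bridge between the metric quasiconvexity extracted from the QI-embedding assumption and the hyperplane-combinatorial notion of quasiconvexity in which Haglund's theorem is naturally phrased. In the hyperbolic setting these notions are known to coincide up to adjusting constants, using that CAT(0) and combinatorial (that is, $\ell^1$) metrics on a finite-dimensional CAT(0) cube complex are bi-Lipschitz equivalent, so the translation is routine but should be verified explicitly.
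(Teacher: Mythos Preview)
The paper does not actually supply a proof of this theorem: it is stated as ``a consequence of a result of Haglund \cite[Theorem 2.28]{haglund2008finite}'' and left at that. Your proposal is therefore not competing with any argument in the paper; rather, it fills in the derivation that the paper leaves implicit, and it does so correctly. The reduction to a subcomplex, the use of hyperbolicity to pass from quasi-isometric embedding to quasiconvexity, and the final appeal to Haglund are exactly the steps one would expect.

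Two small remarks. First, your description of $Y'$ as ``the union of all closed cubes of $X$ meeting $Y$'' is not literally the smallest subcomplex containing $Y$: a maximal cube touching $Y$ only at a vertex would be included under that wording but need not lie in the combinatorial convex hull. What you want is the union of the closures of the open cubes meeting $Y$, i.e.\ for each $y\in Y$ the unique minimal closed cube containing $y$; with that reading the equality of combinatorial convex hulls is immediate. Second, the reduction to a subcomplex is arguably dispensable, since Haglund's argument ultimately bounds the number of hyperplanes separating a point of the combinatorial convex hull from the quasiconvex set, and this works for any quasiconvex subset once one uses the bi-Lipschitz equivalence of the CAT(0) and $\ell^1$ metrics that you already flag. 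Either way the argument goes through.
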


In the following theorem we generalise our results from
\cite[Section 3]{hagen-przytycki}. Here $d_{\mathrm{Haus}}$
denotes the Hausdorff distance.

\begin{thm}
\label{flat} Let $X$ be a $\mathrm{CAT(0)}$ cube complex of
asymptotic rank $n$ and let $F\subseteq~X$ be an $n$-flat. Let
$Y$ be the combinatorial convex hull of $F$. Then
$d_{\mathrm{Haus}}(F,Y)<~\infty$.
\end{thm}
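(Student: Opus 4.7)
My plan is to reveal a product decomposition of $Y$ into $n$ quasi-line factors by partitioning the hyperplanes crossing $F$ according to their traces in $F$, and then to invoke Theorem~\ref{thm:Haglund} in each factor.

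First, a hyperplane of $X$ crosses $Y$ if and only if it crosses $F$: if $F$ lies in a single halfspace $h^+$ then so does every vertex of the combinatorial convex hull $Y$, while the converse is immediate. By Lemma~\ref{hyperplane intersect flat}, each such hyperplane $h$ meets $F$ in a codimension-$1$ flat of $F\cong\mathbb{R}^n$, so I partition these hyperplanes into parallelism classes $\h_1,\dots,\h_k$ according to the parallelism class of $h\cap F$ in $F$. Since non-parallel affine hyperplanes of $\mathbb{R}^n$ always meet, and such a meeting point lies in $F\subseteq Y$, hyperplanes from distinct classes cross inside $Y$, while hyperplanes in the same class are pairwise disjoint. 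Iterated application of Lemma~\ref{product decomposition cube} to $Y$ then yields a product decomposition $Y=L_1\times\cdots\times L_k$ in which each $L_i$ has only pairwise-disjoint hyperplanes, and is therefore a tree.

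Using $\mathrm{asrk}(X)=n$ together with Theorem~\ref{thm:asrk}, I would argue that $k=n$ and each $L_i$ is a \emph{quasi-line}, namely at finite Hausdorff distance from a bi-infinite geodesic. Any unbounded branch in some $L_i$ could be combined with bi-infinite geodesics in the other factors to produce a half-flat $\mathbb{R}^{n-1}\times[0,\infty)$ transverse to $F$, assembling an $(n+1)$-dimensional quasi-flat and contradicting the asymptotic rank bound. Once each $L_i$ is a hyperbolic quasi-line, Theorem~\ref{thm:Haglund} (or a direct tree argument) implies $L_i$ is at finite Hausdorff distance from a geodesic line through it, so $Y$ is at finite Hausdorff distance from a product of $n$ such lines; any two $n$-flats inside such a product are at finite Hausdorff distance, yielding $d_{\mathrm{Haus}}(F,Y)<\infty$.

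The main obstacle will be the quasi-line step: rigorously ruling out unbounded branching in each factor $L_i$. Since $F$ need not pass through vertices of $X$ and the product decomposition is only coarsely compatible with $F$'s embedding, extracting an $(n+1)$-quasi-flat from a putative unbounded branch requires combining the branch's direction with transverse directions from $F$ and controlling the resulting geometry, likely via Lemmas~\ref{gate} and~\ref{parallel set}.
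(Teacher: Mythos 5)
Your overall strategy --- partition the hyperplanes meeting $F$ by the direction of their traces $h\cap F$, extract a product decomposition of $Y$ from Lemma~\ref{product decomposition cube}, and finish each factor with Theorem~\ref{thm:Haglund} --- is the same as the paper's. But one step is simply false: hyperplanes in the same parallelism class need \emph{not} be pairwise disjoint, so the factors $L_i$ are not trees. Parallelism of the traces $h\cap F$ and $h'\cap F$ inside $F$ only says that $h$ and $h'$ do not meet \emph{along} $F$; they can perfectly well cross each other elsewhere in $X$. Already for $n=1$ ($X$ hyperbolic, $F$ a CAT(0) geodesic line) all traces are points, there is a single class, and the combinatorial convex hull of a diagonal geodesic in a square complex is typically $2$-dimensional. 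So ``is therefore a tree'' fails, and with it your ``direct tree argument'' fallback. The paper's replacement: each factor $Y_i$ contains the isometric image of a coordinate line of $F$ (the intersection $l_i=\bigcap_{j\neq i}F_j$ of chosen traces projects isometrically to $Y_i$), so Theorem~\ref{thm:asrk}(1) applied to the product forces $\mathrm{asrk}(Y_i)\le 1$, Theorem~\ref{thm:asrk}(2) makes $Y_i$ hyperbolic, minimality of the hull $Y$ makes $Y_i$ the combinatorial convex hull of that projected line, and only then does Theorem~\ref{thm:Haglund} apply.

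The two steps you defer are exactly where the work lies, and asymptotic rank alone does not deliver them in the form you state. For $k\le n$ you would need every direction class to contribute an unbounded factor, which is not automatic (a class could contain only finitely many hyperplanes); the paper instead works with \emph{pencils} --- infinite, linearly ordered families of pairwise disjoint hyperplanes --- takes a maximal independent family of them, and uses Lemma~\ref{hyperplane intersect ray} along lines of $F$ transverse to a putative extra direction to manufacture a new pencil and contradict maximality. Likewise ``unbounded branch $\Rightarrow (n{+}1)$-quasi-flat'' is not correct as stated: a hyperbolic factor such as a branching tree can contain unbounded branches without raising the asymptotic rank of the product (a half-flat $\R^{n-1}\times[0,\infty)$ is still only $n$-dimensional). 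What rules out far-away branches is not the rank bound but Haglund's theorem applied, inside the hyperbolic factor, to the hull of the projected line. Finally, you should dispose at the outset of the degenerate case $F\subseteq h$ (more precisely $F\subseteq N_h$), as the paper does by projecting to $h$; otherwise Lemma~\ref{hyperplane intersect flat} does not give you a codimension-$1$ trace and your opening claim that a hyperplane crosses $Y$ if and only if it crosses $F$ transversally breaks down.
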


\begin{proof}
If $F$ is contained in the carrier $N_h=h\times [0,1]$ of a
hyperplane $h$, then we can replace~$X$ by $h$ and $F$ by its
projection to $h$. The combinatorial convex hull $Y$ of $F$
equals $Y'\times [0,1], Y'\times \{0\}$, or $Y'\times \{1\}$,
where $Y'$ is the combinatorial convex hull of the projection
of $F$ to $h$. Henceforth we can and will assume that $F$ is
not contained in the carrier of any hyperplane.

Let $\h$ be the collection of hyperplanes intersecting $F$. We
define a \textit{pencil of hyperplanes} to be an infinite
collection of mutually disjoint hyperplanes
$\{h_{i}\}_{i=-\infty}^{\infty}$ such that for each $i$,
$\{h_{j}\}_{j=-\infty}^{i-1}$ and $\{h_{j}\}_{j=i+1}^{\infty}$
are in different halfspaces of $h_i$. It follows from
Lemma~\ref{hyperplane intersect flat} that every pencil of
hyperplanes in $\h$ intersects $F$ in a collection of parallel
family of codimension-1 flats. A collection of pencils of
hyperplanes in~$\h$ is \textit{independent} if their
corresponding normal vectors are linearly independent in
$F=\R^n$.

Let $\{P_{i}\}_{i=1}^{m}$ be a maximal collection of pairwise
independent pencils in $\h$. We claim that $m=n$ and that
$\{P_{i}\}$ is independent. Suppose first $m>n$. Note that if
two pencils $P,P'\subseteq\h$ are independent, then every
hyperplane in $P$ intersects every hyperplane in $P'$. This
gives rise to a quasi-isometric embedding of $\R^{m}$ into~$X$,
contradicting the bound on the asymptotic rank of $X$. If $m<n$
or if $m=n$ but $\{P_{i}\}$ is dependent, then there is a
geodesic line $l$ in $F$ parallel to $h\cap F$ for all
hyperplanes~$h$ in all $P_i$. Using Lemma~\ref{hyperplane
intersect ray}, we can then produce a new pencil $P$ formed of
some hyperplanes intersecting~$l$. Since $P$ is independent
from each $P_i$, this contradicts the maximality of~$m$. This
justifies the claim that $m=n$ and $\{P_{i}\}$ is independent.

For $1\le i\le n$, choose $h_{i}\in P_{i}$ and let
$F_{i}=h_{i}\cap F$. We will prove that for any hyperplane
$h\in \h$, there exists $F_{i}$ such that $h\cap F$ is parallel
(possibly equal) to~$F_{i}$. Otherwise, choose a geodesic line
$l$ in $F$ transverse to $h\cap F$. By Lemma~\ref{hyperplane
intersect ray}, $h$ is contained in a pencil $P_h$ of
hyperplanes intersecting $l$. Note that $P_h$ is independent
from each $P_i$, contradicting the maximality of~$m$.

Let $\h_{i}\subseteq \h$ be the collection of hyperplanes whose
intersection with $F$ is parallel to~$F_i$. The above
discussion implies $\h=\bigsqcup_{i=1}^{n}\h_{i}$. Moreover,
for $i\neq j$, every hyperplane in $\h_i$ intersects every
hyperplane in $\h_j$. Let $Y$ be the combinatorial convex hull
of $F$. Since we assumed that $F$ is not contained in the
carrier of any hyperplane, the hyperplanes in $Y$ are exactly
the intersections with $Y$ of the hyperplanes in $\h$. Two
hyperplanes of $Y$ intersect if and only if the corresponding
hyperplanes in $\h$ intersect. Hence by Lemma~\ref{product
decomposition cube}, we have a product decomposition
$Y=Y_1\times \cdots \times Y_n$.

Let $\pi_{i}:Y\to Y_i$ be the coordinate projections. Let
$l_{i}=\bigcap_{j\neq i}F_j$, which is a geodesic line in $F$.
Note that for $j\neq i$ we have $l_{i}\subseteq F_{j}\subseteq
h_{j}$ and hence the projection $\pi_{j}(l_{i})$ is a single
point. Thus the restriction of $\pi_{i}$ to $l_{i}$ is an
isometric embedding. It follows that $F=\pi_1(l_1)\times\cdots
\times \pi_1(l_n)$. Moreover, since $\pi_i(l_i)=\pi_i(F)$, each
$Y_i$ is the combinatorial convex hull of $\pi_i(l_i)$, since
otherwise we could pass to a smaller convex subcomplex
containing $F$.

Since each of $Y_i$ contains a line and their product has
asymptotic rank $\le n$, by Theorem~\ref{thm:asrk}(1) each
$Y_i$ has asymptotic rank $1$. By Theorem~\ref{thm:asrk}(2)
each $Y_{i}$ is hyperbolic. Thus by Theorem~\ref{thm:Haglund},
we have $d_{\mathrm{Haus}}(\pi_i(l_i), Y_i)<\infty$, and
consequently $d_{\mathrm{Haus}}(F,Y)<\infty$.
\end{proof}

While we will not need it in the remaining part of the paper,
from the proof above we can deduce the following interesting result which
concerns flats that are not necessarily of top rank.

\begin{cor}
\label{arbitrary flat} Let $X$ be a $\mathrm{CAT(0)}$ cube
complex and let $F\subseteq X$ be a flat. Let $Y\subseteq X$ be
the combinatorial convex hull of $F$. Then $Y$ has a natural
decomposition $Y=Y_{1}\times\cdots\times~Y_n\times~K$ such
that:
\begin{enumerate}
\item $n\ge \dim(F)$ and $K$ is a cube.
\item each $Y_{i}$ contains an isometrically embedded copy
    of $\R$ that is the projection of a geodesic line in
    $F$.
\item no $Y_i$ contains a facing triple of hyperplanes,
    that is, a collection of three disjoint hyperplanes
    such that none of them separates the other two.
\end{enumerate}
\end{cor}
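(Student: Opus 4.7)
The strategy is to extend the argument of Theorem~\ref{flat} to a flat $F$ whose dimension may be strictly less than $\mathrm{asrk}(X)$. First, I would perform the same carrier reduction as at the start of that proof: whenever $F$ is contained in the carrier $N_h=h\times[0,1]$ of some hyperplane $h$, an elementary argument shows the unbounded flat $F$ must lie in a single slice $h\times\{c\}$ (since $[0,1]$ admits no bi-infinite geodesic, any geodesic in $F$ is constant in the $[0,1]$-coordinate), and the combinatorial convex hull splits as $Y=Y'\times[0,1]$, $Y'\times\{0\}$, or $Y'\times\{1\}$ with $Y'$ the combinatorial convex hull in $h$ of the projection of $F$. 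Iterating, I accumulate the resulting interval factors into a single cube $K=[0,1]^k$ and reduce to the case where $F$ lies in no carrier of the reduced ambient complex. In this setting every hyperplane $h\in\h$ intersecting $F$ meets it transversally in a codimension-$1$ flat with a well-defined unit normal $v(h)\in F\cong\R^{\dim F}$, and $\h$ coincides with the set of hyperplanes of $Y$ by minimality of $Y$.

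Following the proof of Theorem~\ref{flat}, I would select a maximal collection $\{P_1,\dots,P_n\}$ of pairwise independent pencils of hyperplanes in $\h$, with corresponding normals $v_1,\dots,v_n\in F$, and argue that $\{v_i\}$ spans $F$, yielding $n\ge\dim F$. Otherwise choose $\xi\in F$ perpendicular to every $v_i$ and let $l\subseteq F$ be a bi-infinite geodesic with direction $\xi$. Being unbounded, $l$ must cross some hyperplane $h^*\in\h$, and transversality forces $v(h^*)\cdot\xi\ne 0$, hence $v(h^*)\notin\mathrm{span}(v_i)$. Iterating Lemma~\ref{hyperplane intersect ray} at both ends of $l$ produces a bi-infinite sequence of pairwise disjoint hyperplanes crossing $l$, forming a pencil $P^*$; its common $F$-normal is $\pm v(h^*)$ (since mutually disjoint hyperplanes have parallel $F$-intersections) and is independent from each $v_i$, contradicting maximality of $n$. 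The same maximality forces $v(h)\parallel v_i$ for some $i$ whenever $h\in\h$, so $\h=\bigsqcup_{i=1}^n\h_i$ with $\h_i=\{h\in\h:v(h)\parallel v_i\}$. For $i\ne j$, any $h\in\h_i$ and $h'\in\h_j$ have non-parallel $F$-intersections, so $h\cap h'\ne\emptyset$; Lemma~\ref{product decomposition cube} then yields the decomposition $Y=Y_1\times\cdots\times Y_n\times K$ with $n\ge\dim F$, establishing~(1).

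For~(2), I pick a geodesic line $l\subseteq F$ whose direction has nonzero component along each $v_i$, which is possible since $\bigcup_i v_i^\perp$ is a finite union of codimension-$1$ subspaces of $F$; then $\pi_i(l)\subseteq Y_i$ is a non-constant geodesic in $Y_i$, hence an isometric copy of $\R$. For~(3), suppose $Y_i$ contained a facing triple; lifting through the product structure gives three pairwise disjoint hyperplanes $h,h',h''\in\h_i$ with no one separating the other two. Their $F$-intersections form three distinct parallel codimension-$1$ flats in $F$ with common normal $\pm v_i$, so one of them, say $h'\cap F$, lies between the other two, giving $h\cap F\subseteq(h')^+$ and $h''\cap F\subseteq(h')^-$. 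Since $h$ is disjoint from $h'$ it lies in a single halfspace of $h'$, and having nonempty intersection with $(h')^+$ through $F$ forces $h\subseteq(h')^+$; symmetrically $h''\subseteq(h')^-$, whence $h'$ separates $h$ from $h''$, a contradiction. The main technical point is the pencil construction in the second step: one must arrange the iterative applications of Lemma~\ref{hyperplane intersect ray} at both ends of $l$ so that the resulting hyperplanes form a genuine bi-infinite, linearly ordered, mutually disjoint family and verify that their common $F$-normal lies outside $\mathrm{span}(v_i)$.
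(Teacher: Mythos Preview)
Your proposal is correct and follows exactly the route the paper indicates: extract the cube factor~$K$ by iterated carrier reduction, then run the pencil argument from the proof of Theorem~\ref{flat} to obtain the partition $\h=\bigsqcup\h_i$ and the product decomposition. Since the paper gives no separate proof of the corollary beyond the phrase ``from the proof above we can deduce'', your write-up is in fact more detailed than the paper's.

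Two small remarks. First, for part~(2) your generic-line argument is the right replacement for the paper's lines $l_i=\bigcap_{j\ne i}F_j$; the latter works in Theorem~\ref{flat} only because there $n=\dim F$, whereas here $n$ may exceed $\dim F$ and those intersections can be empty. Second, the ``main technical point'' you flag---that iterating Lemma~\ref{hyperplane intersect ray} along a line really produces a pencil---does go through: once $h_0,h_1$ are disjoint and cross $l$ at $t_0<t_1$, the next hyperplane $h_2$ (disjoint from $h_1$, crossing $l$ at $t_2>t_1$) lies in the halfspace of $h_1$ opposite to $h_0$, so $h_0\cap h_2\subseteq h_1$, forcing $h_0\cap h_2=\emptyset$; induction and the symmetric argument in the negative direction give the bi-infinite pairwise disjoint family. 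Your argument for part~(3) is clean and correct; note that the three hyperplanes have \emph{distinct} $F$-intersections precisely because pairwise disjoint hyperplanes cannot share a common point of $F$.
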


Roughly speaking, (3) means that $Y_{i}$ do not
\textquotedblleft branch\textquotedblright.

\subsection{Product of hyperbolic groups}\label{sec:product}
\begin{defn}
Let $X$ be a CAT(0) cube complex. A group $H\le Aut(X)$ is
\textit{convex cocompact} if there is a convex subcomplex
$Y\subseteq X$ that is \emph{$H$-cocompact}, meaning that $H$
preserves $Y$ and acts on it cocompactly.
\end{defn}

\begin{lem}
\label{minimal invariant} Let $X$ be a $\mathrm{CAT(0)}$ cube
complex and let $H\le Aut(X)$ be convex cocompact. Then there
exists a minimal $H$-invariant convex subcomplex. Moreover, any
minimal $H$-invariant convex subcomplex is $H$-cocompact and
any two minimal $H$-invariant convex subcomplexes are parallel.
\end{lem}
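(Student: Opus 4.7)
The plan is to reduce all three assertions to direct consequences of minimality, using Lemma~\ref{gate} as the main tool. The key observation is that since $H$ acts by isometries, the closest-point projection onto any $H$-invariant complete convex subspace is $H$-equivariant, so the gates $\G(Y_1,Y_2)$ between $H$-invariant convex subcomplexes are themselves $H$-invariant convex subcomplexes.

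For existence, I would start with any $H$-cocompact convex subcomplex $Y$, guaranteed by convex cocompactness of $H$. Since $H$ acts cocompactly by combinatorial automorphisms on $Y$, there are only finitely many $H$-orbits of cubes in $Y$. Every $H$-invariant subcomplex of $Y$ is a union of such orbits, so the collection of $H$-invariant convex subcomplexes of $Y$ is finite. Any element of this finite collection that is minimal for inclusion is automatically minimal among all $H$-invariant convex subcomplexes of $X$, since any strictly smaller such subcomplex would itself be contained in $Y$.

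For the cocompactness statement, I take an arbitrary minimal $H$-invariant convex subcomplex $Z$ together with the fixed $H$-cocompact $Y$, and apply Lemma~\ref{gate} to obtain the gates $\G(Y,Z)=(V_Y,V_Z)$. By the opening observation, $V_Y$ and $V_Z$ are $H$-invariant convex subcomplexes, and minimality of $Z$ forces $V_Z=Z$. The subcomplex $V_Y\subseteq Y$ is closed and $H$-invariant, hence $H$-cocompact because $Y$ is. Lemma~\ref{gate}(2) then supplies an $H$-equivariant isometry $V_Y\to V_Z=Z$, and so $Z$ is $H$-cocompact as claimed. The parallelism statement is an even shorter variant: for two minimal $H$-invariant convex subcomplexes $Z_1,Z_2$, applying Lemma~\ref{gate} to $(Z_1,Z_2)$ produces $H$-invariant convex subcomplexes $V_i\subseteq Z_i$, and minimality of each $Z_i$ forces $V_i=Z_i$, which is the definition of parallel.

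I do not expect a serious obstacle. The whole argument rests on the single observation that gates inherit $H$-invariance from their inputs, together with the finiteness coming from cocompactness. The only subtlety worth noting is the claim that $H$-cocompactness of $Y$ descends to the closed $H$-invariant subcomplex $V_Y$, but this is immediate from the fact that the quotient map $Y\to Y/H$ sends closed $H$-invariant subsets to closed subsets of the compact space $Y/H$.
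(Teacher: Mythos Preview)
Your proof is correct, and for the cocompactness and parallelism assertions it is essentially identical to the paper's: both rely on the observation that the gates $\G(Y_1,Y_2)$ between $H$-invariant convex subcomplexes are themselves $H$-invariant, and then invoke minimality.

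The genuine difference is in the existence argument. The paper argues via the Kuratowski--Zorn Lemma: it takes the poset of $H$-invariant convex subcomplexes contained in a fixed $H$-cocompact $Y$, and shows that any descending chain has nonempty intersection by intersecting with a compact fundamental domain $K$. You instead observe that this poset is \emph{finite}, because cocompactness gives only finitely many $H$-orbits of cubes (or, more safely since the paper does not assume local finiteness, finitely many orbits of vertices---vertices in a CAT(0) cube complex are $1$-separated, so a compact $K$ meets only finitely many, and convex subcomplexes are determined by their vertex sets). Your route is more elementary and sidesteps the appeal to Zorn entirely, at the cost of needing that small discreteness remark; the paper's route is more robust in that it would transfer verbatim to settings where no such finiteness is available. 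Your passage from ``$V_Y$ closed $H$-invariant in $Y$'' to ``$V_Y$ is $H$-cocompact'' via $K'=K\cap V_Y$ is also a slight streamlining over the paper, which instead first produces a minimal $Y_{\min}\subseteq Y$ and compares an arbitrary minimal $Y'$ directly to $Y_{\min}$.
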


\begin{proof}
Let $Y\subseteq X$ be an $H$-cocompact convex subcomplex. Let
$\p$ be the poset of $H$-invariant convex subcomplexes in $Y$.
For the first assertion, by the Kuratowski--Zorn Lemma, it
suffices to show that every descending chain of elements
$\{Y_\lambda\}_\lambda\subseteq\p$ has a lower bound, or
equivalently that their intersection is nonempty. Let
$K\subseteq Y$ be compact such that $HK=Y$. Then each $K\cap
Y_\lambda$ is nonempty, and by compactness of $K$ so is their
intersection.

For the second and third assertion, let
$Y_{\mathrm{min}}\subseteq Y$ be a minimal element of $\p$ and
let $Y'$ be any other minimal $H$-invariant convex subcomplex.
Let $(V,V')=\G(Y_{\mathrm{min}},Y')$. Then both $V$ and $V'$
are $H$-invariant. By Lemma~\ref{gate}(1) both $V$ and $V'$ are
convex subcomplexes, hence from minimality of
$Y_{\mathrm{min}}$ and $Y'$ we have $V=Y_{\mathrm{min}}$ and
$V'=Y'$. Moreover, by Lemma~\ref{gate}(2) we have that $Y'$ is
$H$-equivariantly isometric to $Y_{\mathrm{min}}$ and thus it
is $H$-cocompact.
\end{proof}

\begin{thm}
\label{product of hyperbolic}
Let $X$ be a locally finite $\mathrm{CAT(0)}$ cube complex of asymptotic rank~$n$. Let $H\le Aut(X)$ be a subgroup
satisfying
\begin{enumerate}
\item $H=H_1\times\cdots\times H_n$, where each $H_i$ is an infinite hyperbolic
group, and
\item for some (hence any) point $x\in X$ the orbit map $h\to h\cdot x$ from $H$ to $X$ is a quasi-isometric
embedding.
\end{enumerate}
Then $H$ is convex cocompact. More precisely, if among $H_i$
exactly $\{H_{i}\}_{i=1}^m$ are not virtually $\Z$, then there
is a convex subcomplex $Y\subseteq X$ with a cubical product
decomposition $Y=Y_0\times \prod_{i=1}^{m}Y_{i}$ such that
\begin{enumerate}[(i)]
\item $Y$ is $H$-cocompact, and the action
    $H\curvearrowright Y$ respects the product
    decomposition, and
\item the induced action of $\prod_{i=m+1}^{n}H_{i}$ on $Y_0$ is proper
and cocompact, in particular $Y_0$ is quasi-isometric to
$\R^{n-m}$, and
\item for any pair $i\neq j$ with $1\le j\le m$ and $1\le
    i\le n$, the induced action $H_{i}\curvearrowright
    Y_{j}$ is \emph{almost trivial}, i.e.\ by isometries at
    uniformly bounded distance from the identity.
\end{enumerate}
\end{thm}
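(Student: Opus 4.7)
The plan is to locate a top-rank $\Z^n$-flat $F \subseteq X$ via an abelian subgroup of $H$, use Theorem~\ref{flat} to obtain a cubical product decomposition of its combinatorial convex hull, and then upgrade this to an $H$-invariant cocompact core with compatible product structure.

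Each infinite hyperbolic group $H_i$ contains an element $g_i$ of infinite order, which acts hyperbolically on $X$, since every infinite-order combinatorial isometry of a CAT(0) cube complex is hyperbolic (by a result of Haglund). The $g_i$ commute pairwise, so they generate a subgroup $\Z^n \leq H$ whose orbit in $X$ is quasi-isometrically embedded. The Flat Torus Theorem then yields a $\Z^n$-invariant $n$-flat $F \subseteq X$ in which each $g_i$ translates along the $i$-th coordinate line $l_i$. Since $\mathrm{asrk}(X) = n$, the top-rank flat $F$ is a valid input for Theorem~\ref{flat}, giving a cubical product decomposition $Z = Z_1 \times \cdots \times Z_n$ of the combinatorial convex hull $Z$, with each $Z_i$ hyperbolic and at finite Hausdorff distance from (the projection of) $l_i$.

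I expect the main obstacle to be upgrading this local product to a global $H$-invariant decomposition. The key observation is that for $j \neq i$, any $h \in H_j$ commutes with $g_i$, and hence sends the axis $l_i$ to a parallel axis; consequently $h Z_i$ is parallel to $Z_i$ and $h$ preserves the combinatorial parallel set $P_{Z_i} = Z_i \times Z_i^{\perp}$ of Lemma~\ref{parallel set}. Combining this observation across all pairs $i \neq j$, and absorbing the $H_i$-translates of $Z_i$ into $P_{Z_i}$ (using Lemma~\ref{minimal invariant} to pass to a minimal $H$-invariant subcomplex, so that the conjugates of $g_i$ all have parallel axes), should yield an $H$-invariant convex subcomplex $Y^{*}$ with cubical product decomposition $Y^{*} = W_1 \times \cdots \times W_n$ (via Lemma~\ref{product decomposition cube} applied to the partition of hyperplanes into $n$ parallelism classes inherited from the $n$ parallelism classes of hyperplanes crossing $F$). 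Each $W_i$ is then hyperbolic, and for $j \neq i$ the action $H_j \curvearrowright W_i$ is almost trivial: $H_j$ centralises the cocompact $g_i$-action on $W_i$, and the centraliser in $\Aut{W_i}$ of a hyperbolic isometry of a hyperbolic CAT(0) cube complex is virtually cyclic, hence has bounded orbits modulo translation along the $g_i$-axis.

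Within each hyperbolic $W_i$ the group $H_i$ then acts with quasi-isometrically embedded orbits, so Theorem~\ref{thm:Haglund} supplies an $H_i$-cocompact convex subcomplex $Y_i \subseteq W_i$ as the combinatorial convex hull of any $H_i$-orbit. Setting $Y := Y_1 \times \cdots \times Y_n$ yields the desired $H$-invariant cocompact core, and grouping the virtually cyclic $H_{m+1}, \ldots, H_n$ together into a single Euclidean factor $Y_0 \simeq \R^{n-m}$ (via the Flat Torus Theorem applied to the $\Z^{n-m}$ that they jointly generate) recovers the stated decomposition $Y = Y_0 \times \prod_{i=1}^m Y_i$, from which the three properties follow directly. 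The hardest technical point throughout is the construction of $Y^{*}$: rigorously verifying that the partial preservation of various parallel sets by different subgroups of $H$ combines into a genuine $H$-invariant cubical product with the correct partitioning of hyperplanes into $n$ pairwise transverse classes.
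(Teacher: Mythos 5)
Your overall toolkit (top-rank flats, Theorem~\ref{flat}, parallel sets, Theorem~\ref{thm:Haglund} for the hyperbolic factors) matches the paper's, and your construction of the flat $F$ and its combinatorial convex hull $Z=Z_1\times\cdots\times Z_n$ is sound. But the step you yourself flag as hardest --- producing the $H$-invariant product $Y^*=W_1\times\cdots\times W_n$ --- is where the argument breaks. The combinatorial parallel set $P_{Z_i}$ is (modulo the separate point that finite Hausdorff distance between convex subcomplexes does not imply parallelism) preserved by $H_j$ for $j\neq i$, but it is \emph{not} preserved by $H_i$ itself: for $h\in H_i$ the translate $h\cdot l_i$ is an axis of $hg_ih^{-1}$, not of $g_i$, and conjugates of $g_i$ inside a nonelementary hyperbolic $H_i$ have wildly non-parallel axes (think of $F_2\times\Z$ acting on $T\times\R$). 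Your parenthetical fix --- pass to a minimal $H$-invariant subcomplex so that conjugates of $g_i$ have parallel axes --- is both circular (Lemma~\ref{minimal invariant} presupposes that $H$ is convex cocompact, which is what is being proved) and would be false even granting it. The object whose parallel set is genuinely $H$-invariant is not a slice $Z_i$ in the $i$-th direction but a minimal convex subcomplex $V$ invariant under the \emph{complementary} factor $H'=\prod_{j\neq i}H_j$: then $H_i$ permutes the minimal $H'$-invariant subcomplexes, which are pairwise parallel by Lemma~\ref{minimal invariant}, so $P_V$ is $H$-invariant and one projects $H_i$ to $V^\perp$. To get such a $V$ one must first prove $H'$ is convex cocompact, which the paper does by induction on the number of non-virtually-$\Z$ factors together with a gate/coarse-intersection argument (Lemma~\ref{gate}(3) and Lemma~\ref{lem:coarse} applied to $U$ and $h\cdot U$ for a suitable $h\in H_m$). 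Nothing in your sketch replaces this inductive step.

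A second, smaller gap: your justification of almost triviality of $H_j\curvearrowright Y_i$ is incorrect as stated. The element $g_i$ does not act cocompactly on $W_i$ (that factor must absorb a whole $H_i$-orbit, not merely a $\langle g_i\rangle$-orbit), the centraliser of a hyperbolic isometry in the full automorphism group of a hyperbolic cube complex need not be virtually cyclic, and per-element boundedness does not give the \emph{uniform} bound required in (iii). The paper obtains uniformity because $H'$ fixes the projected orbit $\psi(H_m)$ \emph{pointwise} (since $V$ is $H'$-invariant, its image in $V^\perp$ is a single point), and $V_m$ lies at finite Hausdorff distance from that pointwise-fixed set; your construction would also need $H_j$ to stabilise the chosen $H_i$-orbit hull for $Y=\prod Y_i$ to be $H$-invariant at all.
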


In the proof we need the notion of coarse intersection. Let $X$
be a metric space and let $N_{R}(Y)$ be the $R$-neighbourhood
of a subspace $Y\subseteq X$. A subspace $V\subseteq X$ is the
\textit{coarse intersection} of $Y_1$ and $Y_2$ if~$V$ is at
finite Hausdorff distance from $N_R(Y_1)\cap N_{R}(Y_2)$ for
all sufficiently large $R$. For example, in Lemma~\ref{gate},
in view of its part (3), the gates $V_1,V_2$ are the coarse
intersections of $Y_{1}$ and $Y_2$. However, in general the
coarse intersection of two subsets might not exist.

\begin{lem}[{\cite[Lemma
2.2]{mosher2004quasi}}]
\label{lem:coarse}
Let $X$ be a finitely generated group with word metric. Then the coarse intersection of a
pair of subgroups is well-defined and represented
by their intersection.
\end{lem}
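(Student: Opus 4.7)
The plan is to prove directly that for every $R \ge 0$, the Hausdorff distance between $H_1 \cap H_2$ and $N_R(H_1) \cap N_R(H_2)$ is finite, where $H_1, H_2$ denote the two subgroups. This simultaneously establishes existence of the coarse intersection, its uniqueness up to finite Hausdorff distance, and the fact that $H_1 \cap H_2$ represents it. The inclusion $H_1 \cap H_2 \subseteq N_R(H_1) \cap N_R(H_2)$ is automatic, so only the reverse bound requires work.

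The reverse direction rests on a pigeonhole applied to the product $h_1^{-1} h_2$. Given $g \in N_R(H_1) \cap N_R(H_2)$, using left-invariance of the word metric I would write $g = h_1 a_1 = h_2 a_2$ with $h_i \in H_i$ and $|a_i| \le R$, so that $h_1^{-1} h_2 = a_1 a_2^{-1}$ has word length at most $2R$. Hence, as $g$ varies, $h_1^{-1} h_2$ ranges over the finite ball $F$ of radius $2R$ around the identity. For each $a \in F$ that can be realised as $(h_1')^{-1} h_2'$ with $h_i' \in H_i$, I would fix once and for all one such representative pair $(h_1^a, h_2^a)$.

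Any other pair $(h_1, h_2) \in H_1 \times H_2$ with $h_1^{-1} h_2 = a$ then satisfies $h_1 (h_1^a)^{-1} = h_2 (h_2^a)^{-1}$, and this common element $k$ lies in $H_1 \cap H_2$. Writing $h_1 = k h_1^a$ yields
\[
d(g, k) \le d(g, h_1) + d(h_1, k) \le R + |h_1^a| \le R + M,
\]
where $M := \max_{a} |h_1^a|$, taken over the finitely many $a \in F$ for which a representative was chosen, is finite. Hence $g \in N_{R+M}(H_1 \cap H_2)$ and the Hausdorff distance is at most $R + M$. The argument is essentially pigeonhole plus an elementary coset calculation, with no serious obstacle; the only observation needed is that the fibres of $(h_1, h_2) \mapsto h_1^{-1} h_2$ on $H_1 \times H_2$ are exactly the orbits of the diagonal left-action of $H_1 \cap H_2$, so any two preimages of a given $a$ differ by simultaneous left-multiplication by an element of $H_1 \cap H_2$.
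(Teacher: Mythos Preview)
Your proof is correct. The paper itself does not supply a proof of this lemma; it simply cites \cite[Lemma 2.2]{mosher2004quasi} and refers the reader there for further discussion. Your argument --- writing $g=h_1a_1=h_2a_2$, noting that $h_1^{-1}h_2$ lies in the finite ball of radius $2R$, fixing one representative pair for each realisable value, and observing that the fibres of $(h_1,h_2)\mapsto h_1^{-1}h_2$ are precisely the left $(H_1\cap H_2)$-orbits --- is the standard pigeonhole proof and is exactly what one would expect to find in the cited reference.
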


See
\cite[Chapter~2]{mosher2004quasi} for more discussion on coarse
intersection.

\begin{proof}[Proof of Theorem~\ref{product of hyperbolic}]
We first prove that $H$ is convex cocompact, which we do by the
induction on~$m$. Consider first the case $m=0$. By
\cite{Hagsemisimple}, $H$ acts on $X$ be semi-simple
isometries. By the Flat Torus Theorem \cite[Chapter~II.7]{MR1744486}, $H$ acts cocompactly on an $n$-flat
$F\subseteq
X$. By Theorem~\ref{flat}, the combinatorial convex hull $Y$ of
$F$ is at finite Hausdorff distance from $F$. Since $X$ is
locally finite, $Y$ is $H$-cocompact, as desired.

Suppose now that $m\geq 1$. Let $H'=\prod_{i\neq m}H_i$. We
first prove that the group $H'$ is convex cocompact. Choose a
subgroup $Z\le H_m$ isomorphic to $\Z$ and choose $h\in H_m$
such that the coarse intersection of $hZ$ and $Z$ is bounded.
Let $G=H'\times Z\subset H$. By induction assumption, there
exists a $G$-cocompact convex subcomplex $U\subset X$. Let
$V\subset U$ be the gate with respect to $h\cdot U$. Note that
both $U$ and $h\cdot U$ are $H'$-invariant, so $V$ is
$H'$-invariant. By Lemma~\ref{gate}(3), $V$ is the coarse
intersection of $U$ and $h\cdot U$. Hence, by Lemma~\ref{lem:coarse} applied to $G$ and $hGh^{-1}$, the action $H'\acts V$ is cocompact.

By Lemma~\ref{minimal invariant}, there exists a minimal
$H'$-cocompact convex subcomplex, for which we keep the
notation $V$. Then for any $h\in H_m$, the translate $h\cdot V$
is minimal $H'$-invariant, hence parallel to $V$ by
Lemma~\ref{minimal invariant}. Let $P_V=V\times V^{\perp}$ be
the combinatorial parallel set of $V$ (see Lemma \ref{parallel
set}). We have that $P_V$ is $H$-invariant. Moreover, since $V$
is $H'$-invariant, there are induced actions $H\acts V^{\perp}$
and $H_m\acts V^{\perp}$.

Choose a point $v\in V$. Let $\psi:H_m\to V^{\perp}$ be the composition
of the orbit map $h\to h\cdot v$ with the coordinate
projection. We claim that $\psi$ is a quasi-isometric
embedding. This follows from assumption~(2) and the estimates
below, where $\sim$ means equality up to a uniform
multiplicative and additive constant. Namely, for any
$h_1,h_2\in H_m$ we have:
\begin{equation*}
d_{H_m}(h_1,h_2)\sim d_H(h_1 H',h_2H')\sim d_X(h_1\cdot V,h_2 \cdot V)=d_{V^{\perp}}(\psi(h_1),\psi(h_2))
\end{equation*}

By Theorem~\ref{thm:asrk}, since $V$ contains an isometrically
embedded copy of $\R^{n-1}$, the asymptotic rank of $V^{\perp}$
is $\le 1$, and hence $V^{\perp}$ is hyperbolic. Let $V_m\subseteq
V^{\perp}$ be the combinatorial convex hull of $\psi(H_m)$.
Then $d_{\mathrm{Haus}}(V_m,\psi(H_m))<\infty$ by
Theorem~\ref{thm:Haglund}. Moreover, $V_m$ is $H$-invariant
under the action $H\acts V^{\perp}$ since $\psi(H_m)$ is
invariant under $H$. Thus $H$ acts cocompactly on the convex
subcomplex $V\times V_m\subseteq P_V$. Notice that since
$H'\curvearrowright \psi(H_m)$ is trivial, the action
$H'\curvearrowright V_m$ is almost trivial.

By now we already know that $H$ is convex cocompact. As for
properties (i)---(iii), if $m=1$, then it suffices to take $Y_0=V$ and $Y_1=V_1$.
If $m\geq 2$, to obtain
the required decomposition, we consider $X'=V\times
V_m,\ H''=\prod_{i\neq (m-1)}H_i$ and we repeat the previous
argument. This gives rise to an $H$-cocompact convex subcomplex
$V'\times V_{m-1}\subseteq V\times V_m$, where $V'$ is a
minimal $H''$-cocompact convex subcomplex. Since $V_m$ is
contained in some $R$-neighbourhood of a $V'$, the intersection
$V_{m-1}\cap V_m$ is compact. Moreover, $V'$ and $V_{m-1}$
admit cubical product decompositions $V'=(V'\cap
V)\times(V'\cap V_m)$ and $V_{m-1}=(V_{m-1}\cap
V)\times(V_{m-1}\cap V_m)$, thus $V'\times V_{m-1}=(V'\cap
V)\times(V'\cap V_m)\times(V_{m-1}\cap V)\times(V_{m-1}\cap
V_m).$ The $H$-action respects the above decomposition.
Moreover, the induced action $H'\curvearrowright (V'\cap V_m)$
is almost trivial and the induced action $H''\curvearrowright
(V_{m-1}\cap V)$ is almost trivial.
If $m=2$, then we take $Y_1=V_1\cap V,\ Y_2=V'\cap V_2,$ and
$Y_0=(V\cap V')\cup (V_1\cap V_2)$. If $m\geq 3$, then we let $X''=V'\times
V_{m-1}, H'''=\prod_{i\neq (m-2)}H_i$ and we repeat the
previous process to obtain further product decomposition. We
run this process $m$ times, obtaining the required decomposition as the result of the last step. In each step, we possibly
get nontrivial compact factors similar to $V_{m-1}\cap V_m$. We
absorb all these compact factors into the factor $Y_0$ (we can also discard them).
\end{proof}

\section{Artin groups}\label{sec:artin groups}
\subsection{Background on Artin groups}
Let $A$ be an Artin group with defining graph~$\Gamma$, and
generators $S$. Let $W$ be the Coxeter group defined by
$\Gamma$. For any $T\subseteq S$ let $W_T$ (respectively $A_T$)
be the \emph{special subgroup} of~$W$ (respectively $A$)
generated by $T$. The special subgroup $W_T$ is naturally
isomorphic to the Coxeter group defined by the subgraph
$\Gamma_T$ induced on $T$ \cite{Bourbaki1968}. Similarly, by
\cite{Van1983homotopy} the special subgroup $A_T$ of $A$ is
naturally isomorphic to the Artin group defined by $\Gamma_T$.

\begin{lem}[{\cite[Theorem 1.1]{CharneyParis2014}}]
\label{lem:special convex}
Special subgroups of Artin groups are convex with respect to
the word metric defined by standard generators.
\end{lem}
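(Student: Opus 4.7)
The plan is as follows. The goal is $|g|_S = |g|_T$ for all $g \in A_T$. By induction on $|S\setminus T|$ it suffices to handle the case $T = S\setminus \{s\}$ for a single generator $s \in S$. Van der Lek's theorem \cite{Van1983homotopy} ensures that $A_T \hookrightarrow A$ is injective, so the question is well-posed, and the inequality $|g|_S \leq |g|_T$ is automatic since $T \subseteq S$; the real content is the reverse inequality.

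My first step would be to reduce to the analogous statement in the Coxeter group $W$, where convexity of $W_T$ in $W$ is the classical theorem of Tits (via the Exchange/Deletion Condition), so every reduced expression for an element of $W_T$ uses only letters in $T$. The difficulty in lifting this to $A$ is that a geodesic word in $S^{\pm 1}$ for $g \in A_T$ may project to a non-reduced word in $W$, since cancellations in $W$ are far more generous than in $A$. To bridge this gap, my second step would be to decompose $g = a^{-1}b$ with $a,b \in A^+$ (positive monoid) and $|g|_S = |a|_S + |b|_S$. Granted such a decomposition, I would apply Tits' convexity to the images $\bar a, \bar b \in W$ to get $\bar a, \bar b \in W_T$, and then lift back using the natural injection $A_T^+ \hookrightarrow A^+$ (van der Lek again). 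The third step is the positive convexity statement: for $p \in A^+ \cap A_T$, every geodesic positive word for $p$ uses only letters in $T$. This I would handle by induction on positive length, since the first letter of a positive geodesic for $p$ must lie in the left-descent set of $\bar p \in W_T$, which by Tits' convexity sits inside $T$.

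The main obstacle is the positive geodesic decomposition $g = a^{-1}b$ in the second step. For spherical Artin groups this is supplied by Garside theory, but for general Artin groups the existence of such decompositions with precise length control is essentially equivalent to solving the (still open) word problem. I therefore expect the actual argument of Charney--Paris to bypass such a decomposition: presumably one starts from a purported geodesic word $w$ for $g \in A_T$ containing some $s^{\pm 1}$ with $s \notin T$, pairs up positive and negative occurrences of $s$ (forced by the fact that $g$ maps trivially to the $s$-coordinate of the abelianization $A \to \bigoplus_{S}\mathbb{Z}$), and eliminates such a pair by braid moves combined with a delicate analysis restricted to two-generator Artin subgroups, where the dihedral structure makes cancellations explicit. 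The hardest part is ensuring that these purely \emph{local} braid moves produce a \emph{global} reduction of $w$ to a strictly shorter word, ultimately yielding a geodesic lying entirely in $T^{\pm 1}$.
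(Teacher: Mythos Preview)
The paper does not give a proof of this lemma; it is stated purely as a citation of \cite{CharneyParis2014}, so there is no in-paper argument to compare against.

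On the merits of your proposal: you have correctly located the genuine obstruction. The decomposition $g=a^{-1}b$ with $a,b\in A^{+}$ and $|g|_S=|a|_S+|b|_S$ is available precisely when one has a Garside-type normal form, and for general Artin groups this is open (indeed essentially equivalent to the word problem, as you note). So your ``second step'' does fail outside the spherical case, and the sketch as written is not a proof. Your closing speculation --- cancelling paired occurrences of $s^{\pm1}$ via local moves inside two-generator subgroups --- is closer in spirit to the actual argument, but Charney--Paris do not work directly on a fixed geodesic word. Instead they construct a set-theoretic retraction $\pi_T\colon A\to A_T$ satisfying $|\pi_T(g)|_S\le |g|_S$ for every $g\in A$; since $\pi_T$ is the identity on $A_T$ and $|g|_S\le |g|_T$ trivially, convexity follows at once. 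The retraction is built by induction on $|S\setminus T|$, and the inductive step (deleting a single generator $s\notin T$) reduces to an explicit computation inside the dihedral Artin groups $A_{\{s,t\}}$. The conceptual difference is that a retraction sidesteps the need to control any particular geodesic representative of $g$, which is exactly where your approach gets stuck.
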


A subset $T\subseteq S$ is \emph{spherical} if the special
subgroup $W_T$ is finite. The \emph{dimension} of the Artin
group $A$ is the maximal cardinality of a spherical subset of
$S$.

The following is a consequence of \cite{charney1995k} and
\cite[Corollary 1.4.2]{charney138finite}.
\begin{thm}
\label{K(pi,1)} Let $A$ be an Artin group of dimension $n$.
Suppose that
\begin{enumerate}[(A)]
\item $n\leq 2$, or
\item every clique $T$ in $\Gamma$ is spherical.
\end{enumerate}
Then there is a finite $n$-dimensional cell complex that is a
$K(A,1)$.
\end{thm}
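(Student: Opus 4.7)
The plan is to apply the two cited results directly, after identifying the appropriate finite $n$-dimensional complex, namely the \emph{Salvetti complex} $\mathrm{Sal}(A)$ of $A$. Recall that $\mathrm{Sal}(A)$ is built by attaching, for each spherical subset $T\subseteq S$, a cell of dimension $|T|$ modelled on the finite $K(A_T,1)$ for the spherical Artin group $A_T$ constructed by Deligne. By definition of the dimension of $A$, the complex $\mathrm{Sal}(A)$ has dimension exactly $n$, and a van Kampen computation identifies $\pi_1(\mathrm{Sal}(A))$ with $A$. It therefore suffices to show that $\mathrm{Sal}(A)$ is aspherical in each of the two cases.

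For case (B), the hypothesis that every clique of $\Gamma$ is spherical is the standard \emph{FC condition} on $A$. Under FC, Charney \cite[Corollary~1.4.2]{charney138finite} equips the modified Deligne complex with a piecewise Euclidean cubical metric and verifies the link (non-positive curvature) condition to produce a CAT(0) structure on its universal cover. Since $\mathrm{Sal}(A)$ arises as a deformation retract of the quotient of this complex by the underlying Coxeter group action, its asphericity follows.

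For case (A), when $n\leq 2$, the modified Deligne complex is at most two-dimensional, and the required link condition reduces to a short combinatorial check carried out by Charney and Davis in \cite{charney1995k}; this again yields a CAT(0) metric on the universal cover and hence asphericity of $\mathrm{Sal}(A)$. The main technical input hidden behind the two references is therefore the link-condition verification on the modified Deligne complex, essentially automatic in dimension $\leq 2$ but requiring the FC assumption in higher dimension. Once these CAT(0) structures are in hand, the theorem is immediate.
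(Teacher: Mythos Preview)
The paper does not actually prove this theorem; it records it as a direct consequence of the cited works of Charney and Charney--Davis and offers no further argument. Your proposal therefore goes beyond the paper by sketching the content behind those citations, which is a reasonable thing to do.

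Your overall strategy---take the Salvetti complex as the candidate finite $n$-complex and deduce its asphericity from a CAT(0) structure on the modified Deligne complex, the link condition being essentially automatic in dimension $\leq 2$ and requiring the FC hypothesis otherwise---is indeed the approach of the cited papers. One imprecision worth flagging: the sentence ``$\mathrm{Sal}(A)$ arises as a deformation retract of the quotient of this complex by the underlying Coxeter group action'' is not the correct relationship. The modified Deligne complex carries an action of the \emph{Artin} group $A$ (with stabilisers the spherical special subgroups $A_T$), not of the Coxeter group. The passage from contractibility of the modified Deligne complex to asphericity of $\mathrm{Sal}(A)$ goes through a complex-of-groups (or Borel construction) argument, using Deligne's theorem that each spherical $A_T$ has a finite $K(\pi,1)$. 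This is a minor misstatement in an otherwise accurate summary of the literature.
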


\subsection{Two-generator Artin groups}

We start with the description of most two-generator Artin
groups as virtually $F_k\times \Z$, where $F_k$ is the free
group with $k$ generators.

\begin{lem}\label{lem:free times z}
Let $A$ be an Artin group with defining graph $\Ga$ a single
edge labelled by $n>2$. Then
\begin{enumerate}
\item $A$ has a finite index subgroup of form $F_k\times
    \Z$ with $k\geq 2$, and
\item no power of one of the two standard generators lies in the $\Z$ factor.
\end{enumerate}
\end{lem}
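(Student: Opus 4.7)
\emph{Proof proposal.} My plan is to exhibit $A$ as a central extension
\[
1 \to Z \to A \to Q \to 1
\]
with $Z = Z(A) \cong \Z$ and $Q$ virtually free of rank $\ge 2$, and then split the extension over a finite-index free subgroup of~$Q$. I would identify $Z$ and $Q$ separately for the two parities of $n$. If $n$ is even, setting $c = ab$ converts the braid relation $(ab)^{n/2} = (ba)^{n/2}$ into $[a, c^{n/2}] = 1$, so $c^{n/2}$ is central, and modding it out yields $Q \cong \langle a, c \mid c^{n/2} = 1 \rangle \cong \Z \ast \Z/(n/2)$. If $n$ is odd, I use the classical Tietze transformation exhibiting $A$ as the $(2,n)$-torus knot group $\langle x, y \mid x^2 = y^n \rangle$, where $y = ab$ and $x = \Delta = (ab)^{(n-1)/2}a$ is the Garside element (the identity $\Delta^2 = (ab)^n$ follows from a single application of the braid relation); killing the central $x^2 = y^n$ then gives $Q \cong \Z/2 \ast \Z/n$. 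Since the centre of a free product of nontrivial groups is trivial, $Z$ equals the cyclic subgroup killed above, and is infinite cyclic because $A$ is torsion-free.

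For $n \ge 3$ the quotient $Q$ is a free product of cyclic groups whose orders sum to at least five, so it is not virtually cyclic. By a standard Bass--Serre/Kurosh argument there is a finite-index torsion-free subgroup $F \le Q$ (for instance the kernel of $Q$ onto its abelianisation in the $n$-odd case, and the kernel of the projection onto $\Z/(n/2)$ in the $n$-even case), and $F$ is free of rank $k \ge 2$ by an Euler characteristic count. The preimage $\tilde F \le A$ then sits in a central extension
\[
1 \to \Z \to \tilde F \to F \to 1,
\]
and since $H^2(F;\Z) = 0$ for any free group $F$, this extension splits as $\tilde F \cong F \times \Z \cong F_k \times \Z$ with $k \ge 2$, yielding assertion (1). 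Under the splitting the $\Z$ factor is identified with $Z = Z(A) \le A$.

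For (2) it suffices to show that the image of $a$ in $Q$ has infinite order, since then no non-trivial power of $a$ lies in $\ker(A \to Q) = Z$, and \emph{a fortiori} none lies in the $\Z$ factor of $\tilde F$. In the $n$-even case this is immediate, as $a$ is a free generator of $Q \cong \Z \ast \Z/(n/2)$. In the $n$-odd case $a = y^{-(n-1)/2}x$ maps to a reduced word of length two in $\Z/2 \ast \Z/n$, because $(n-1)/2 \not\equiv 0 \pmod{n}$ for $n \ge 3$; any reduced word of positive even length in a free product of cyclic groups has infinite order. The main obstacle I anticipate is the verification of the torus-knot presentation in the $n$-odd case, which is a classical but slightly fiddly Tietze calculation. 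Once that is in place, splitting the central extension and bounding orders in free products of cyclic groups are both routine.
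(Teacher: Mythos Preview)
Your argument is correct and takes a genuinely different route from the paper's. The paper proceeds geometrically: it invokes the Brady--McCammond action of $A$ on a product of a tree and a line (also recovered in the paper's Theorem~\ref{thm:constr}), applies the splitting theorem \cite[Theorem~II.6.12]{MR1744486} to obtain a virtual decomposition $A'\times\Z$, and then uses Bass--Serre theory on the tree factor to conclude that $A'$ is virtually free; part~(2) is read off from the fact that the standard generators act hyperbolically on the tree. Your approach is purely algebraic: you identify $A$ explicitly as a central $\Z$-extension of a free product of cyclic groups (via the torus-knot presentation for $n$ odd and the substitution $c=ab$ for $n$ even), pass to a torsion-free finite-index subgroup of the quotient, and split the resulting extension using $H^{2}(F_k;\Z)=0$; part~(2) then reduces to a normal-form check in the free product. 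Your route is more self-contained---no CAT(0) geometry is needed---and the Euler-characteristic count makes the rank of the free factor explicit. The paper's route is shorter once the tree-times-line action is in hand, and it meshes naturally with the geometric machinery (cores, product decompositions) used throughout the rest of the article. One small remark: in your even-length normal-form claim for part~(2), note that a reduced word of even length in a free product of \emph{two} groups is automatically cyclically reduced, which is what actually forces infinite order; your statement is correct as written in this two-factor setting.
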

\begin{proof}
By \cite{BradyMccammond2000} (or by our proof of
Theorem~\ref{thm:constr}) $A$ acts freely and cocompactly on a
product of a tree and a line, where a central element acts as a
translation in the line factor. By \cite[Theorem~II.6.12]{MR1744486} $A$ virtually decomposes as $A'\times \Z$. The
induced action of $A'$ on the tree factor has finite vertex
stabilisers so by Bass-Serre theory $A'$ is a graph of finite
groups, in particular $A'$ is virtually free, justifying~(1).
Part~(2) follows from the fact that standard generators act
hyperbolically on the tree factor.
\end{proof}

Throughout this section by $\bar x$ we denote the inverse of
$x$. By $x^z$ we denote the conjugate $\bar z x z$.

Let $A_n = \angled{a,b\mid \underbrace{aba\dots}_{n} =
\underbrace{bab\dots}_n}$. Denote $\underbrace{aba\dots}_{n} =
\underbrace{bab\dots}_n$ by $\Delta$. Let $A'_n$ be the kernel
of the homomorphism sending each generator to the generator of
$\Z/2$ i.e.\ the subgroup consisting of all words of even
length. The group $A'_n$ is generated by the elements: $r = ab,
s = a\bar b, t = \bar ab,$ since any word of even length can be
written as a product of these elements and their inverses.
If~$\phi$ is a word in an alphabet $\Lambda$, and
$x\in\Lambda$, then we denote by $\mathrm{Exp}_x(\phi)$ the sum
of all the exponents at $x$ in $\phi$.

By direct computation we immediately establish the following:
\begin{lem}
If $n$ is odd, then the conjugation by $\Delta$ is an order two
automorphism sending $s\mapsto \bar s, t\mapsto \bar t,
r\mapsto q$, where $q = ba = \bar s r \bar t$. In particular,
$\Delta^2$ is a central element.

If $n$ is even, then $\Delta$ is a central element.
\end{lem}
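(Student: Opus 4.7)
The plan is a direct computation, exploiting the two expressions $\Delta=\underbrace{aba\cdots}_{n}=\underbrace{bab\cdots}_{n}$ to compute $\Delta a\Delta^{-1}$ and $\Delta b\Delta^{-1}$, and then propagating to $r,s,t$ via their definitions $r=ab$, $s=a\bar b$, $t=\bar a b$.

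First I would treat the odd case. When $n$ is odd, the word $\underbrace{aba\cdots}_{n}$ ends in $a$ and $\underbrace{bab\cdots}_{n}$ ends in $b$. Concatenating, I get
\[
a\Delta \;=\; a\cdot \underbrace{bab\cdots ab}_{n} \;=\; \underbrace{aba\cdots ab}_{n+1} \;=\; \underbrace{aba\cdots ba}_{n}\cdot b \;=\; \Delta b,
\]
so $\Delta b\Delta^{-1}=a$, and by the symmetric computation $\Delta a\Delta^{-1}=b$. From this I would read off the action on $A'_n$:
\[
\Delta s\Delta^{-1}=(\Delta a\Delta^{-1})(\Delta \bar b\Delta^{-1})=b\bar a=\bar s, \qquad \Delta t\Delta^{-1}=\bar b a=\bar t, \qquad \Delta r\Delta^{-1}=ba=q,
\]
and a direct check confirms the identity $q=\bar s r\bar t$. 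The order-two claim follows because $\Delta^{2}a\Delta^{-2}=\Delta b\Delta^{-1}=a$ and symmetrically for $b$, so $\Delta^{2}$ commutes with both generators of $A$, hence is central.

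The even case is analogous but cleaner: when $n$ is even, $\underbrace{aba\cdots}_{n}$ ends in $b$ while $\underbrace{bab\cdots}_{n}$ ends in $a$, so
\[
a\Delta \;=\; a\cdot\underbrace{bab\cdots ba}_{n} \;=\; \underbrace{aba\cdots ba}_{n+1} \;=\; \underbrace{aba\cdots ab}_{n}\cdot a \;=\; \Delta a,
\]
and symmetrically $b\Delta=\Delta b$. Hence $\Delta$ centralises the generating set of $A$, so $\Delta$ is central.

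No genuine obstacle is expected; the only mild subtlety is keeping careful track of which of $a,b$ sits at the end of each of the two expressions for $\Delta$, which depends on the parity of $n$. Everything else is a one-line rearrangement of the defining relation together with the definitions of $r,s,t,q$.
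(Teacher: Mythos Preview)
Your proof is correct and is precisely the direct computation the paper alludes to; the paper does not give an explicit proof but simply states that the lemma follows ``by direct computation''. Your verification of $\Delta a\Delta^{-1}=b$, $\Delta b\Delta^{-1}=a$ for odd $n$ (and $\Delta a\Delta^{-1}=a$, $\Delta b\Delta^{-1}=b$ for even $n$) from the two expressions for $\Delta$, together with the ensuing action on $r,s,t$, is exactly what is intended.
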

Let $z$ be the element $\Delta^2$ for $n$ odd and the element $\Delta$ for $n$ even.

\begin{lem}\label{lem:b^n}
If $n$ is odd, then we have
\[
b^n = \phi(s,t,r)\Delta,
\]
where $\mathrm{Exp}_r(\phi) = 0$. \end{lem}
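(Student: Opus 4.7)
The plan is to simply derive an explicit formula for $b^n\bar\Delta$ in terms of $r, s, t$ and observe that the exponents at $r$ cancel.

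First I would record two short computations that expose the structure of the problem. The first is that $b^2 = \bar s r$, since $\bar s r = b\bar a\cdot ab = b^2$. The second is the identity $q=ba=\bar s r\bar t$ (already stated before the lemma), which inverts to $\bar q = t\bar r s$. Both identities hold by direct multiplication, regardless of parity of $n$, and together they will carry all the work.

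Next I would rewrite $\Delta$ using its ``starts and ends in $b$'' form. Since $n$ is odd, $\Delta=\underbrace{bab\cdots b}_{n}=(ba)^{(n-1)/2}b=q^{(n-1)/2}b$, so that $\bar\Delta=\bar b\,\bar q^{\,(n-1)/2}$. Setting $k=(n-1)/2$, I can then multiply out:
\begin{equation*}
b^n\bar\Delta \;=\; b^n\,\bar b\,\bar q^{\,k} \;=\; b^{n-1}\,\bar q^{\,k} \;=\; (b^2)^{k}\,\bar q^{\,k} \;=\; (\bar s r)^{k}\,(t\bar r s)^{k}.
\end{equation*}
Thus the desired word is $\phi(s,t,r) = (\bar s r)^{k}(t\bar r s)^{k}$, and reading off the exponent of $r$ gives $\mathrm{Exp}_r(\phi) = k-k = 0$, as required.

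There is no real obstacle: once one notices the compact identities $b^2=\bar s r$ and $\bar q = t\bar r s$, the statement becomes a one-line calculation. The only thing to be careful about is to use the form of $\Delta$ that ends in $b$ (not the one ending in $a$), so that the trailing $\bar b$ from $\bar\Delta$ cancels one $b$ from $b^n$ and leaves an even-length word, allowing the pairing into $b^2$'s. Everything else is routine word-manipulation inside $A'_n$.
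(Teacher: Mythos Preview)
Your argument is correct and, in fact, cleaner than the paper's. Both proofs produce the same group element $\phi=b^n\bar\Delta$, but as different words in $r,s,t$: the paper writes $\phi=\bar s\prod_{i=(n-3)/2}^{0}\bar t^{r^i}=\bar s\,\bar r^{k}(r\bar t)^{k}$ and then checks a posteriori that this word times $\Delta$ equals $b^n$, using the conjugation action of $\Delta$ to push $(r\bar t)^k=a^{2k}$ past $\Delta$. You instead compute $b^n\bar\Delta$ directly from $b^{2}=\bar s r$ and $\bar q=t\bar r s$, obtaining $\phi=(\bar s r)^{k}(t\bar r s)^{k}$, where the vanishing of $\mathrm{Exp}_r(\phi)$ is visible at a glance. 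Your route avoids the conjugation-by-$\Delta$ step and the somewhat opaque product formula, at no cost: all that matters for the sequel (Corollary~\ref{cor:formula_for_b}) is the existence of \emph{some} word with $\mathrm{Exp}_r(\phi)=0$.
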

\begin{proof}
Consider the following word $\phi$ expressed as a product of
terms indexed by decreasing $i$:
\[
\phi(s,t,r) = \bar s \prod_{i = \frac{n-3}{2}}^{0}  \bar t^{r^{i}}
\]
Since $r^i$ appear in the expression defining $\phi$ only as
elements that we conjugate by, we have $\mathrm{Exp}_r(\phi) =
0$.

To verify that $b^n=\phi\Delta$, note that
\[
\phi=\bar s \prod_{i = \frac{n-3}{2}}^{0} \bar r^{i}\bar tr^{i} =
\bar s (\bar r^{\frac{n-3}{2}}\bar t r^{\frac{n-3}{2}}) (\bar r
^{\frac{n-3}{2}-1}\bar t r^{\frac{n-3}{2}-1})\dots (\bar r\bar
t r )\bar t=\bar s \bar r^{\frac{n-1}{2}}(r\bar t )^{\frac{n-1}{2}}.
\]
Since $\bar s\bar
r^{\frac{n-1}{2}} = b\bar a (\bar b\bar a)^{\frac{n-1}{2}} =b
\bar \Delta$ and $r\bar t \Delta =\Delta qt= \Delta b^2$, we have
\[
\phi(s,t,r)\Delta = \bar s \bar r^{\frac{n-1}{2}}\Delta b^{n-1} = b\bar \Delta \Delta b^{n-1} = b^n.
\]
\end{proof}

\begin{cor}
\label{cor:formula_for_b}
 If $n$ is odd, we have
\[
b^{2n}\bar{z} \in [A'_n,A'_n].\]
\end{cor}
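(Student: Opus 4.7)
The plan is to identify $b^{2n}\bar z$ with an explicit product involving $\phi$ and then show it lies in the commutator subgroup by a simple exponent-sum check in the free group $F(r,s,t)$. Since $r,s,t$ generate $A'_n$, the natural surjection $F(r,s,t)\twoheadrightarrow A'_n$ induces a surjection $\Z^3=F(r,s,t)^{\mathrm{ab}}\twoheadrightarrow A'_n/[A'_n,A'_n]$, so any word in $F(r,s,t)$ with all three exponent sums equal to zero represents an element of $[A'_n,A'_n]$. Verifying this vanishing will be the only thing I need to do.

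First I compute $b^{2n}\bar z$ in terms of $\phi$. By Lemma~\ref{lem:b^n}, $b^n=\phi\Delta$, so $b^{2n}=\phi\Delta\phi\Delta$. Using that $z=\Delta^2$ is central when $n$ is odd, this rearranges as $b^{2n}\bar z=\phi\cdot(\Delta\phi\bar\Delta)$. By the unnumbered lemma just before~\ref{lem:b^n}, the inner automorphism $x\mapsto\bar\Delta x\Delta$ (which equals $x\mapsto\Delta x\bar\Delta$ since $\Delta^2$ is central) acts on the generators of $A'_n$ by $s\mapsto\bar s$, $t\mapsto\bar t$, $r\mapsto q=\bar s r\bar t$. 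Thus $\Delta\phi\bar\Delta$ is obtained from the explicit formula for $\phi$ in Lemma~\ref{lem:b^n} by performing these three substitutions.

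The remaining step is to track exponent sums. Reading them off the explicit formula for $\phi$ gives $\mathrm{Exp}_s(\phi)=-1$, $\mathrm{Exp}_t(\phi)=-\tfrac{n-1}{2}$, and $\mathrm{Exp}_r(\phi)=0$, the last being already recorded in Lemma~\ref{lem:b^n}. The substitution $s\mapsto\bar s$ negates the $s$-exponent, $t\mapsto\bar t$ negates the $t$-exponent, and $r\mapsto\bar s r\bar t$ preserves the $r$-exponent while subtracting $\mathrm{Exp}_r(\phi)$ from each of the $s$- and $t$-exponents. Plugging in yields $\mathrm{Exp}_s(\Delta\phi\bar\Delta)=1$, $\mathrm{Exp}_t(\Delta\phi\bar\Delta)=\tfrac{n-1}{2}$, $\mathrm{Exp}_r(\Delta\phi\bar\Delta)=0$, which exactly cancel the three exponent sums of $\phi$. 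Hence all three exponent sums of the product $b^{2n}\bar z=\phi\cdot(\Delta\phi\bar\Delta)$ vanish, as required.

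The key non-routine input is front-loaded in Lemma~\ref{lem:b^n}: its whole point is to write $b^n$ as $\phi\Delta$ with $\mathrm{Exp}_r(\phi)=0$. Without that vanishing, the substitution $r\mapsto\bar s r\bar t$ would contribute unmatched $s$- and $t$-exponents and the cancellation would fail. I therefore expect no further obstacle in the present corollary.
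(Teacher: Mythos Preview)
Your proof is correct and follows essentially the same approach as the paper: both compute $b^{2n}\bar z=\phi\cdot(\Delta\phi\bar\Delta)$ and then verify that all exponent sums vanish. The only cosmetic difference is that the paper keeps $q$ as a fourth letter and checks $\mathrm{Exp}_s,\mathrm{Exp}_t,\mathrm{Exp}_r,\mathrm{Exp}_q$ abstractly (noting that $s,t$-exponents in $\phi(\bar s,\bar t,q)$ are the negatives of those in $\phi(s,t,r)$), whereas you substitute $q=\bar s r\bar t$ and compute the three exponent sums explicitly; your observation that $\mathrm{Exp}_r(\phi)=0$ is what prevents the substitution from introducing unmatched $s$- and $t$-contributions is exactly the reason the paper records that vanishing in Lemma~\ref{lem:b^n}.
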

\begin{proof}
We have
\[
b^{2n} = \phi(s,t,r)\Delta\phi(s,t,r)\Delta = \phi(s,t,r)\phi(\bar s, \bar t, q)z.
\]
Denote the word $\phi(s,t,r)\phi(\bar s, \bar t, q)$ by
$\psi(s,t,r,q)$. By Lemma~\ref{lem:b^n}, we have
$\mathrm{Exp}_r(\psi) =\mathrm{Exp}_q(\psi) = 0$. We also have
$\mathrm{Exp}_s(\psi) = \mathrm{Exp}_t(\psi) = 0$ since the
total exponents of $s$ and $t$ in $\phi(s,t,r)$ are equal to
the total exponents of $\bar s$ and $\bar t$ in $\phi(\bar s ,
\bar t, q)$, respectively. Thus $\psi \in [A_n',A_n']$.
\end{proof}
Corollary~\ref{cor:formula_for_b} does not hold for $n$ even, since in that case $\Delta$ is a central element.

\subsection{Surface lemma}

The following lemma will allow us to utilise the preceding result
when discussing finite index subgroups of $A_n$.

\begin{lem}\label{lem:surface lemma}
Let $G$ be a finitely generated group and let $z\in G$ be
central. Let~$H$ be a finite index normal subgroup of $G$, and
let $h\in H\cap z[G,G]$. Then for any homomorphism $\rho:H\to
\Z$ such that $\rho(\angled{z}\cap H) \neq \{0\}$, there exist
a positive integer~$m$ and $g\in G$ with $\rho((h^m)^g)\neq 0$.
\end{lem}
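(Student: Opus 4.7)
The plan is to reduce the statement to the transfer homomorphism $V\colon G\to H/[H,H]$ associated to the finite-index normal subgroup $H\trianglelefteq G$. Its two relevant properties are that it kills $[G,G]$ (being a homomorphism into an abelian group) and that, for $g\in H$, it is given by the conjugation sum $V(g)\equiv\sum_{i=1}^{N} t_i^{-1} g\, t_i \pmod{[H,H]}$, where $t_1,\dots,t_N$ are coset representatives of $G/H$. The permutation of cosets appearing in the general transfer formula is trivial here because $H$ is normal and $g\in H$.

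First I would extract the right power of $h$. Write $h=zc$ with $c\in[G,G]$, so that centrality of $z$ gives $h^m=z^m c^m$ for every $m\ge 1$ without any reordering commutator issue. Let $k\ge 1$ be the order of $zH$ in the finite group $G/H$; then $z^k\in H$, and by the hypothesis $\rho(\angled{z}\cap H)\ne\{0\}$, after replacing $k$ by a suitable positive multiple we may assume $\rho(z^k)\ne 0$. Since $h^k\in H$ as well, $c^k=z^{-k}h^k$ lies in $H\cap[G,G]$.

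Then I would average over cosets. Applying the transfer to $c^k\in[G,G]$ yields $V(c^k)=0$, so the formula above reads $\sum_i t_i^{-1} c^k\, t_i \equiv 0 \pmod{[H,H]}$; composing with $\rho$ we get $\sum_i \rho(t_i^{-1} c^k\, t_i)=0$. On the other hand, centrality of $z^k$ gives $\rho((h^k)^{t_i})=\rho(z^k)+\rho(t_i^{-1} c^k\, t_i)$, and summing over $i$ produces $N\rho(z^k)\ne 0$. Hence some summand $\rho((h^k)^{t_i})$ is nonzero, and we take $m=k$ together with $g=t_i$. Note $(h^m)^g$ lies in $H$ by normality, so applying $\rho$ to it makes sense.

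The only mildly delicate step, and the one I would verify most carefully, is the reduction of the transfer formula to the conjugation sum $\sum_i t_i^{-1} g\, t_i$ for elements $g\in H$: it depends on a clean setup of the coset permutation and is, in my view, the only place where a sign or indexing error could slip in. Everything else is a direct computation once $k$ is chosen correctly.
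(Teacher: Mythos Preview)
Your proof is correct, and it takes a genuinely different route from the paper. The paper argues topologically: since $hz^{-1}\in[G,G]$, it realizes $hz^{-1}$ as the boundary of a map from a compact oriented surface $S$ into the presentation complex $X$ of $G$, then passes to a finite cover $\widehat S$ that lifts to the $H$-cover $\widehat X$ of $X$, and cuts $\widehat S$ along arcs joining the basepoint to the other boundary components to obtain a connected surface $S'$ with a single boundary circle. That boundary represents an element of $[H,H]$ of the shape $\prod_{i=1}^{q}(h^{m_i})^{g_i}\,z^{-M}$ with $M=\sum m_i$, and applying $\rho$ gives $\sum_i\rho\big((h^{m_i})^{g_i}\big)=\rho(z^{M})\neq 0$, forcing one summand to be nonzero.

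Your argument replaces this surface picture with the transfer $V\colon G\to H^{\mathrm{ab}}$, which is the natural algebraic avatar of the same covering-space idea. The payoff is a shorter, purely group-theoretic proof with a cleaner conclusion: you produce a single exponent $m=k$ (with $k$ the order of $zH$ in $G/H$) and a conjugator from a fixed transversal, whereas the paper's surface argument yields a product with a priori varying exponents $m_i$. One small simplification: once $k$ is chosen as the order of $zH$, the cyclic group $\langle z\rangle\cap H$ is generated by $z^k$, so $\rho(z^k)\neq 0$ is automatic from the hypothesis and no further multiple of $k$ is needed.
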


\begin{proof}
Let $X$ be a presentation complex for $G$. Let $S$ be an
oriented surface with connected $\partial S$ and basepoint
$s\in\partial S$, mapping to $X$, such that on the level of
fundamental groups $\partial S\mapsto h\bar z$. Let $\widehat
X$ be the finite cover of $X$ corresponding to $H$ and let
$\widehat S$ be a finite cover of $S$ such that $\widehat S \to
S \to X$ lifts to $\widehat S\to \widehat X$. Choose a system~$\Sigma$ of nonintersecting arcs that join the basepoint of
$\widehat S$ to the other preimages of $s$, one for each of the
boundary components of $\widehat S$. Consider the surface $S'$
obtained from~$\widehat S$ by cutting along the arcs
of~$\Sigma$, and the mapping $S'\to \widehat X$ that factors
through~$\widehat S$. Then, as the boundary of a surface,
$\partial S'$ is mapped to an element $f\in H=\pi_1(\widehat
X)$ contained in $[H,H]$. The arcs of $\Sigma$ map to paths in
$\widehat X$ that project to closed paths in $X$ corresponding
to some $g_i\in G$. Thus we have $f=\prod_{i=1}^{q}
(h^{m_i})^{g_i}\bar z^{M}$, where $m_i\geq 1$ with $M=\sum
m_i$.

Since $H$ is normal, each $(h^{m_i})^{g_i}$ lies in $H$. We
have $\rho(\prod_{i=1}^{q}(h^{m_i})^{g_i}) = \rho(z^{M})\neq
0$. That means that there is at least one element
$(h^{m_i})^{g_i}$ such that $\rho((h^{m_i})^{g_i}) \neq 0$.
\end{proof}

\begin{cor}\label{cor:odd n}
Let $n$ be odd and let $H$ be a finite index normal subgroup of
$A_n'$. Then for any homomorphism $\rho:H\to \Z$ such that
$\rho(\angled{z}\cap H)\neq \{0\}$, there exist a positive
integer $m$ and $g\in A'_n$ such that $b^m\in H$ and
$\rho((b^m)^{g})\neq 0$.
\end{cor}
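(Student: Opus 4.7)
The plan is to apply Lemma~\ref{lem:surface lemma} with $G=A'_n$, with the given normal subgroup $H$, and with a suitable power of $z$ (rather than $z$ itself) playing the role of the central element. The hypothesis element $h$ will be taken to be $b^{2nk}$ for a positive integer $k$ chosen so that $b^{2nk}\in H$; such a $k$ exists because $b^{2n}\in A'_n$ and $[A'_n:H]<\infty$, e.g.\ one may take $k$ to be the order of the coset $b^{2n}H$ in the finite quotient $A'_n/H$.

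Next I would invoke Corollary~\ref{cor:formula_for_b}, which gives $b^{2n}\in z[A'_n,A'_n]$. Raising both sides to the $k$-th power and using the fact that $z$ is central in $A'_n$ yields $b^{2nk}\in z^{k}[A'_n,A'_n]$, so that $h=b^{2nk}$ lies in $H\cap z^{k}[A'_n,A'_n]$. This puts us in the situation of Lemma~\ref{lem:surface lemma} with central element $z^{k}$ in place of $z$. To apply the lemma it remains to check that $\rho(\angled{z^{k}}\cap H)\neq\{0\}$: writing $\angled{z}\cap H=\angled{z^{p}}$ for the smallest $p\geq 1$ with $z^{p}\in H$, we have $\rho(z^{p})\neq 0$ by hypothesis, and $\angled{z^{k}}\cap H=\angled{z^{\mathrm{lcm}(k,p)}}$, on which $\rho$ takes the non-zero value $\frac{\mathrm{lcm}(k,p)}{p}\rho(z^{p})$.

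Finally, Lemma~\ref{lem:surface lemma} produces a positive integer $m'$ and an element $g\in A'_n$ with $\rho(((b^{2nk})^{m'})^{g})\neq 0$. Setting $m=2nkm'$, we have $b^{m}=(b^{2nk})^{m'}\in H$ and $\rho((b^{m})^{g})\neq 0$, as required. The argument is essentially routine and no single step is a serious obstacle; the only point that requires some care is the mismatch between the central element $z$ appearing in Corollary~\ref{cor:formula_for_b} and the fact that the specific power of $b$ guaranteed to lie in $H$ is $b^{2nk}$ rather than $b^{2n}$, which forces the switch to $z^{k}$ and the accompanying re-verification of the hypothesis on $\rho$.
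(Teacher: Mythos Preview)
Your proposal is correct and follows exactly the same approach as the paper: apply Lemma~\ref{lem:surface lemma} with $G=A'_n$, $h=b^{2nk}$ for $k$ large enough that $b^{2nk}\in H$, and $z^k$ in place of $z$, invoking Corollary~\ref{cor:formula_for_b} to verify the hypothesis $h\in H\cap z^k[G,G]$. The paper's proof is simply a terser version of what you wrote; your additional care in verifying $\rho(\langle z^k\rangle\cap H)\neq\{0\}$ and in passing from $b^{2n}\bar z\in[A'_n,A'_n]$ to $b^{2nk}\bar z^k\in[A'_n,A'_n]$ via centrality of $z$ is entirely appropriate.
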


\begin{proof}
Let $k$ be large enough so that $b^{2nk}\in H$. By
Corollary~\ref{cor:formula_for_b}, we can apply
Lemma~\ref{lem:surface lemma} with $G=A_n',h=b^{2nk},$ and
$z^k$ in the role of $z$.
\end{proof}

\begin{cor}\label{cor:even n}
Let $n$ be even and let $H$ be a finite index normal subgroup
of $A_n$. Then for any homomorphism $\rho:H\to \Z$ such that
$\rho(\angled{z}\cap H)\neq \{0\}$, there exist a positive
integer $m$ and $g\in A_n$ such that at least one of
$(a^m)^{g}$ and $(b^m)^{g}$ lies in $H$ and is not mapped to
$0$ under $\rho$.
\end{cor}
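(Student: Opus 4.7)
The plan is to adapt the proof of Corollary~\ref{cor:odd n} to the even case, but the direct analogue fails: when $n$ is even, $\Delta$ has image $(n/2,n/2)$ in the abelianization $A_n^{\mathrm{ab}}\cong\Z^2$, while any pure power $a^p$ or $b^p$ has image $(p,0)$ or $(0,p)$. Consequently no pure power of $a$ or $b$ is congruent to a nontrivial power of $z=\Delta$ modulo $[A_n,A_n]$, and Lemma~\ref{lem:surface lemma} cannot be applied with $h$ equal to such a power. The key observation is that the product $a^{nN}b^{nN}$ does share its image with $\Delta^{2N}$ in $A_n^{\mathrm{ab}}$, so we can feed this product into the surface lemma and split the resulting conjugate back into an $a$-piece and a $b$-piece using that $\rho$ is a homomorphism.

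First I will choose $N$ to be a common multiple of the orders of $a$, $b$, and $\Delta$ in the finite quotient $A_n/H$, so that $a^{nN}$, $b^{nN}$, and $\Delta^{2N}$ all lie in $H$ and the hypothesis $\rho(\angled{z}\cap H)\neq\{0\}$ guarantees $\rho(\Delta^{2N})\neq 0$. Then, since $a^{nN}b^{nN}$ and $\Delta^{2N}$ both map to $(nN,nN)$ in $A_n^{\mathrm{ab}}$, we have $a^{nN}b^{nN}\in\Delta^{2N}[A_n,A_n]$. Applying Lemma~\ref{lem:surface lemma} with $G=A_n$, with the central element $\Delta^{2N}$ in the role of $z$, and with $h=a^{nN}b^{nN}$ produces a positive integer $m$ and an element $g\in A_n$ such that $\rho((h^m)^g)\neq 0$.

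Finally I will unpack this conclusion using normality of $H$ (so that $(a^{nN})^g$ and $(b^{nN})^g$ individually lie in $H$) and the fact that $\rho$ is a homomorphism to $\Z$:
\[
(h^m)^g=\bigl((a^{nN})^g(b^{nN})^g\bigr)^m,\qquad \rho((h^m)^g)=m\,\rho((a^{nN})^g)+m\,\rho((b^{nN})^g).
\]
Since the left side is nonzero, at least one summand on the right is nonzero, which, with the common exponent $m'=nN$ in place of $m$, gives exactly the required conclusion; both $(a^{m'})^g$ and $(b^{m'})^g$ in fact lie in $H$ by the choice of $N$.

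The main obstacle, and the only non-routine step, is recognizing that the direct analogue of the odd case cannot work and that Lemma~\ref{lem:surface lemma} should instead be applied to the two-letter word $a^{nN}b^{nN}$, after which the homomorphism property of $\rho$ automatically distributes the conclusion over the two standard generators.
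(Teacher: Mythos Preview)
Your proof is correct and follows essentially the same approach as the paper: choose exponents so that $a^{k}b^{k}\in z^{k'}[A_n,A_n]$, apply Lemma~\ref{lem:surface lemma} to $h=a^{k}b^{k}$, and then use that $\rho$ is a homomorphism into an abelian group to split the conclusion between the $a$-part and the $b$-part. The only cosmetic difference is that the paper passes through $(fh)^m\in f^mh^m[H,H]$ before applying $\rho$, whereas you invoke additivity of $\rho$ directly; these are equivalent.
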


\begin{proof}
Let $k = \frac{n}2k'$ be a nonzero integer such that
$a^k,b^k\in H$. Since $z^{k'} = (ab)^k$, we have
\[a^kb^k\in z^{k'}[A_n,A_n].\]
By Lemma~\ref{lem:surface lemma}, we have $m>0$ and $g\in A_n$
such that $\rho\big((a^kb^k)^m)^{g}\big)\neq 0$. Let
$f=(a^{k})^g$ and $h=(b^{k})^g$. We have $(fh)^{m} \in
f^{m}h^{m}[H,H]$. Thus $\rho(f^{m}h^{m}) \neq 0$ and so at
least one of $f^{m} = (a^{km})^{g}$ and $h^{m} = (b^{km})^{g}$
is not mapped to $0$ under $\rho$.
\end{proof}

\section{The main theorem}\label{sec:main}

In this section we prove Theorem~\ref{thm:main}. The
implication (i)$\Rightarrow$(ii) is obvious.

\subsection{Implication (iii)$\Rightarrow$(i)}

\begin{thm}
\label{thm:constr}
Let $A$ be an Artin group with each connected component of the
defining graph:
\begin{itemize}
\item a vertex, or an edge, or else \item all interior
    edges labeled by 2 and all leaves labelled by even
    numbers.
\end{itemize}
    Then $A$ is the fundamental group of a nonpositively curved cube complex.
\end{thm}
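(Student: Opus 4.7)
The plan is to assemble the cube complex $X$ with $\pi_{1}(X)\cong A$ as a wedge of pieces, one for each connected component of the defining graph $\Gamma$. For the reduction step, if $\Gamma=\Gamma_{1}\sqcup\Gamma_{2}$, then $A\cong A_{\Gamma_{1}}\ast A_{\Gamma_{2}}$, and the wedge of two NPC cube complexes at a common $0$-cell remains NPC because the link at the wedge vertex is the disjoint union of the original links and hence stays flag. So it suffices to produce an NPC cube complex for each allowed component: an isolated vertex (use $S^{1}$), an isolated edge labeled $n$ (build an explicit $X_{n}$, the main step), or a larger graph $\Gamma'$ with all interior edges labeled $2$ and all leaf edges labeled by even numbers (assembled from the $X_{n}$'s and a Salvetti complex).

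For $X_{n}$ with $n=2k$, I would first rewrite the relation $(ab)^{k}=(ba)^{k}$ as $[a,c^{k}]=1$ where $c=ab$, giving $A_{n}\cong\langle a,c\mid [a,c^{k}]=1\rangle$. Define $X_{n}$ to have one vertex $v$, a loop $c$, loops $a_{0},\ldots,a_{k-1}$, and $k$ squares with boundaries $c\cdot a_{i+1}\cdot c^{-1}\cdot a_{i}^{-1}$ (indices mod~$k$). The $k$ imposed relations $a_{i}=c\,a_{i+1}\,c^{-1}$ eliminate $a_{1},\ldots,a_{k-1}$ in terms of $a_{0}$ and yield $[a_{0},c^{k}]=1$, so $\pi_{1}(X_{n})\cong A_{n}$. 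The link of $v$ is the complete bipartite graph $K_{2,2k}$ between the two half-edges of $c$ and the $2k$ half-edges of the $a_{i}$'s; being bipartite it is triangle-free, so Gromov's link condition holds. Note that $k=1$ recovers the standard torus.

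For $n$ odd, I would use the presentation $A_{n}\cong\langle d,c\mid d^{2}=c^{n}\rangle$ obtained by setting $d=a(ba)^{(n-1)/2}$, and realize the corresponding amalgamated product $\langle d\rangle\ast_{\mathbb{Z}}\langle c\rangle$ (over $\mathbb{Z}=\langle d^{2}\rangle=\langle c^{n}\rangle$) as a graph-of-spaces cube complex: take a circle $C_{d}$ with $n$ edges, a circle $C_{c}$ with $2$ edges, and a cylinder of $2n$ squares whose bottom and top circles (each subdivided into $2n$ edges) are attached to $C_{d}$ and $C_{c}$ by the degree-$2$ and degree-$n$ covering maps respectively. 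The link at each vertex is again a complete bipartite graph ($K_{2,2}$ along $C_{d}$ and $K_{2,n}$ along $C_{c}$), hence triangle-free.

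For a component $\Gamma'$ of the third type, let $\Gamma_{0}\subseteq\Gamma'$ be the subgraph spanned by the non-leaf vertices; all edges of $\Gamma_{0}$ are interior, hence labeled $2$, so $A_{\Gamma_{0}}$ is right-angled with Salvetti cube complex $S$. For each leaf edge $uv$ of label $2m$ with $u$ a non-leaf vertex, glue a copy of $X_{2m}$ onto $S$ by identifying the $a_{0}$-loop of $X_{2m}$ with the $u$-loop of $S$; iterated van~Kampen then gives $\pi_{1}=A_{\Gamma'}$. The main obstacle I anticipate is checking flagness of the link at each glued vertex: one must exclude triangles mixing a RAAG-commutation edge with an edge from some $K_{2,2m_{i}}$, and triangles involving two attachments $X_{2m_{i}}$ and $X_{2m_{j}}$ sharing a common non-leaf vertex. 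These are ruled out because no two leaves of $\Gamma$ are adjacent, so no square of the total complex involves two leaf-generators, and because across distinct attachments the only shared link-vertices sit on the $u$-axes.
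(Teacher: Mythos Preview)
Your proposal is correct and follows the same overall strategy as the paper: reduce to connected components, build explicit NPC square complexes with complete bipartite links for the two-generator pieces, and for larger components glue these along the non-leaf generator circle to the Salvetti complex of the right-angled core $\Gamma_0$. The only cosmetic differences are that for odd $n$ the paper uses a two-vertex complex $K_n$ (a band of $n$ squares attached along $\underbrace{ab\cdots a}_{n}\,\bar t\,\underbrace{\bar b\bar a\cdots\bar b}_{n}\,\bar t$) rather than your torus-knot mapping cylinder, and for the amalgamation step the paper observes that the $a$-circle is locally convex and invokes \cite[Proposition~II.11.6]{MR1744486} in place of your direct flagness check.
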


\begin{proof}
We assume without loss of generality that $\Ga$ is connected,
since if $\Gamma$ has more connected components, then $A$ is
the fundamental group of the wedge of the complexes obtained
for its connected components.

If $\Gamma$ is a single vertex, then $A$ is the fundamental
group of a circle.

If $\Ga$ is a single edge labelled by an odd $n$, then let $K_n$ be the cube complex described in the figure below.
\begin{center}
\includegraphics[scale=0.5]{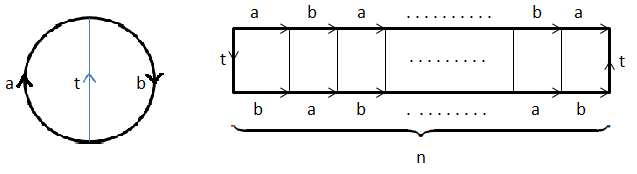}
\end{center}
On the left side we see the 1-skeleton of $K_n$ consisting of
three edges labelled by $a,b,t$, and the right side indicates
how to attach the unique $2$-cell (subdivided into $n$ squares)
along its boundary path $\underbrace{ab\dots a}_{n}\bar t
\underbrace{\bar b \bar a\dots \bar b}_{n}\bar t$. It is easy
to check that the link of each of the two vertices in $K_n$ is
isomorphic to the spherical join of two points with $n$ points,
hence $K_n$ is nonpositively curved. By collapsing the $t$-edge
we obtain the presentation complex for the standard
presentation of $A$, so $\pi_{1}(K_n)=A$. We learned this
construction from Daniel Wise.

If $\Ga$ is a single edge labelled by an even $n$, let $x=ab$.
The group $A$ is then presented as $\angled{a,x\mid
ax^{n/2}=x^{n/2}a}$. Let $K_{n,a}$ be the cube complex
described in the figure below.
\begin{center}
\includegraphics[scale=0.5]{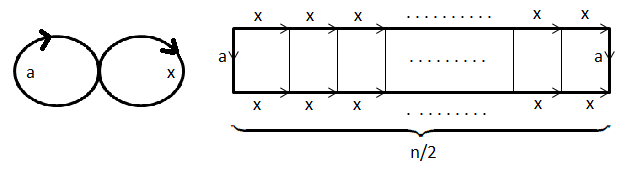}
\end{center}
One can check that the link of the unique vertex in $K_{n,a}$
is isomorphic to the spherical join of two points with $n$
points, hence $K_{n,a}$ is nonpositively curved. It is clear
that $\pi_1(K_{n,a}) = A$.

Similarly if we let $y=ba$, then $A$ can be presented as
$\angled{b,y\mid by^{n/2}=y^{n/2}b}$. We define $K_{n,b}$ in a
similar way. Note that the $a$-circle in $K_{n,a}$ is a locally
convex subcomplex, so is the $b$-circle in $K_{n,b}$.

If $\Ga$ contains more than one edge, then let
$\Ga'\subseteq\Ga$ be the nonempty subgraph induced on all the
vertices that have at least two neighbours. Thus the edges of
$\Ga'$ are precisely the interior edges and by the hypothesis
they are labelled by $2$. Hence $A_{\Ga'}$ is a right-angled
Artin group. The \emph{Salvetti complex} $S(\Ga')$ is the
nonpositively curved cube complex obtained from the
presentation complex of $A_{\Ga'}$ by adding the missing cubes
of higher dimension (see \cite{charney2007introduction}). Let
$\{(s_i,t_i)\}_{i=1}^{k}$ be the collection of leaves of $\Ga$
with $s_{i}\in\Ga'$. Let $n_i$ be the label of the edge
$(s_i,t_i)$, which is even. Let $K$ be the amalgamation of
$\{K_{n_i,s_i}\}_{i=1}^{k}$ and $S(\Ga')$ along the
$s_i$-circles. Then $\pi_{1}(K)=A$ and it follows from
\cite[Proposition II.11.6]{MR1744486} that $K$ is nonpositively
curved.
\end{proof}

\subsection{Implication (ii)$\Rightarrow$(iii)}

\begin{thm}
\label{thm:2dim}
Let $A$ be a $2$-dimensional Artin group. If $A$ is virtually cocompactly cubulated,
then each connected component of the defining graph of $A$
    is either
\begin{itemize}
\item a vertex, or an edge, or else \item
all its interior
    edges are labeled by $2$ and all its leaves are
    labelled by even numbers.
\end{itemize}
\end{thm}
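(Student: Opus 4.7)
The plan is to follow the outline sketched in the introduction. Assume $A'\le A$ is a finite-index subgroup acting properly and cocompactly on a CAT(0) cube complex $X$, and suppose some connected component of $\Gamma$ violates (iii); we aim for a contradiction. Since $A$ is $2$-dimensional, Theorem~\ref{K(pi,1)}(A) yields a $2$-dimensional $K(A,1)$, so by Lemma~\ref{rank} the asymptotic rank of $X$ is at most $2$, and in particular no quasi-isometric embedding $\R^3\hookrightarrow X$ exists.

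For every edge $e=\{s_p,s_q\}$ of $\Gamma$ with label $n_e\ge 3$, combine Lemma~\ref{lem:special convex} and Lemma~\ref{lem:free times z} to obtain a finite-index subgroup $H_e\le A_e\cap A'$ isomorphic to $F_r\times\Z$ ($r\ge 2$), with the $\Z$-factor generated by a power of the centre $z_e$ of $A_e$ and containing no power of $s_p$ or $s_q$. Applying Theorem~\ref{product of hyperbolic} to $H_e$ gives a convex subcomplex $Y_e=Y_e^0\times Y_e^1\subseteq X$ on which $H_e$ acts cocompactly respecting the product; here $Y_e^0$ is quasi-isometric to $\R$ (the $\Z$-factor translates it cocompactly) while $Y_e^1$ is Gromov hyperbolic (acted on geometrically by the $F_r$-factor). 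Call an element of $H_e$ \emph{vertical} if it projects trivially to $F_r$, and \emph{horizontal} if it projects trivially to $\Z$. Let $\rho_e\colon H_e\to\Z$ be the translation-length character of the action on $Y_e^0$, defined on a further finite-index orientation-preserving subgroup; then $\rho_e(z_e)\ne 0$, and as translation length on an $\R$-like factor is a class function, $\rho_e$ agrees with the restriction of the isometry-class function on $\mathrm{Isom}(Y_e^0)$.

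Extract the failure of (iii): either (a) an interior edge $e=\{s_i,s_j\}$ has label $n_{ij}\ge 3$, or (b) a leaf edge $e=\{s_i,s_j\}$ has odd label $n_{ij}\ge 3$ inside a component with more edges. In either case we may label so that $s_i$ has an additional neighbour $s_k$ via an edge $e'=\{s_i,s_k\}$, whence the subgroup intersection is $A_e\cap A_{e'}=\angled{s_i}$. The \emph{geometric side} of the argument, to be carried out using the gate technology of Lemma~\ref{gate}, the combinatorial parallel-set description of Lemma~\ref{parallel set} and the structure of top-rank convex hulls from Theorem~\ref{flat}, should establish that inside each product decomposition $Y_e$ and $Y_{e'}$ the shared cyclic subgroup $\angled{s_i}$ acts either vertically or horizontally: its axis is forced to align with one of the two factors. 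The \emph{algebraic side} then asserts that $\angled{s_i}$ can be neither: not vertical, because by Lemma~\ref{lem:free times z}(2) no power of $s_i$ lies in the $\Z$-factor of $H_e$; not horizontal, because by Corollary~\ref{cor:odd n} (for odd $n_e$) or Corollary~\ref{cor:even n} (for even $n_e$), together with conjugation-invariance of $\rho_e$, some power of $s_i$ has nonzero $\rho_e$, i.e.\ a nontrivial $\Z$-projection. For odd $n_e$ this uses the conjugacy $s_i\sim s_j$ inside $A_e$ coming from the odd-length braid relation; for even $n_e$ in case (a) we may swap $s_i\leftrightarrow s_j$ since both endpoints of an interior edge have further neighbours. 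These two conclusions are incompatible, producing the contradiction.

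The principal obstacle is the geometric dichotomy: showing that the cyclic subgroup $\angled{s_i}$ must act vertically or horizontally on each $Y_e$. This is not formal and is where most of the work will go. The expected route is to analyse the combinatorial convex hull of the $\Z^2$-flat on which $\angled{s_i,z_e}$ acts cocompactly---furnished by the Flat Torus Theorem and Theorem~\ref{flat}---and then argue that the axis of a cyclic subgroup lying inside two such product subcomplexes, jointly with the constraint that the transverse factor in the global parallel set $P(\alpha)=\alpha\times N'$ of the axis $\alpha$ of $s_i$ is hyperbolic (coming from $\mathrm{asrk}(X)\le 2$), forces alignment with one of the product factors of each $Y_e$. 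Once this geometric input is secured, the algebraic half above closes the proof.
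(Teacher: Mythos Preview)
Your outline tracks the paper's strategy in broad strokes, but the paper's execution differs from yours at precisely the point you flag as ``the principal obstacle'', and there is also a subtle flaw in your ``not horizontal'' step.

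\textbf{The conjugation issue.} You assert that $\rho_e$ is conjugation-invariant and use this to pass from $\rho_e((s_i^m)^g)\neq 0$ (given by Corollary~\ref{cor:odd n} or~\ref{cor:even n}) to $\rho_e(s_i^m)\neq 0$. But $g$ lies in $A'_e$ (or $A_e$), not necessarily in the finite-index subgroup $\hat A$ acting on $X$; hence $g$ need not act on $X$ at all, and conjugation by $g$ need not preserve $\rho_e$. The paper does not undo the conjugation. Instead it exploits normality of $\hat A$ in $A$: the conjugate $(\hat A_{bc})^g$ still lies in $\hat A$, so Theorem~\ref{product of hyperbolic} produces a convex subcomplex $Y_{bc}$ for $(\hat A_{bc})^g$. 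The intersection $\hat A_{ab}\cap(\hat A_{bc})^g$ is commensurable with $\langle (b^m)^g\rangle$, and one works with this conjugate throughout.

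\textbf{The dichotomy is not proved abstractly.} The paper never establishes ``vertical or horizontal'' as a standalone geometric fact. Rather, it first uses the algebraic corollary to secure $\rho((b^m)^g)\neq 0$, and \emph{then} takes the gate $Y\subset Y_{ab}$ with respect to $Y_{bc}$. This gate is $B$-cocompact (with $B$ commensurable to $\langle(b^m)^g\rangle$) by Lemma~\ref{gate}(3) and Lemma~\ref{lem:coarse}, and as a convex subcomplex of $Y_{ab}=V_{ab}\times H_{ab}$ it inherits a splitting $Y=Y_V\times Y_H$. Since $\rho(B)\neq 0$, the factor $Y_V$ is unbounded; since $Y$ is quasi-isometric to $\R$, the factor $Y_H$ is bounded. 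Because the $\Z$-factor $Z$ of $\hat A_{ab}$ acts almost trivially on $H_{ab}$, every $Z$-orbit lies at finite Hausdorff distance from $Y$, forcing $Z$ and $B$ to be commensurable. This yields $(b^g)^j\in Z$ for some $j\neq 0$; since $Z$ is commensurable with the centre of $A_{ab}$ (hence normal), we get $b^j\in Z$, contradicting Lemma~\ref{lem:free times z}(2).

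So the logical shape is ``$\rho(B)\neq 0$ (algebra) $\Rightarrow$ $B$ commensurable with $Z$ (gate)'', rather than a prior dichotomy followed by two separate algebraic refutations. Your proposed route via the parallel set of an axis of $s_i$ and asymptotic-rank constraints is plausible in spirit but would have to produce, in effect, exactly the gate argument; the paper's direct use of the gate is both shorter and avoids the delicate analysis you anticipate.
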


\begin{proof}
Suppose that there exists a finite index subgroup $\hat A\le A$
that acts properly and cocompactly by combinatorial
automorphisms on a CAT(0) cube complex $X$. Without loss of
generality, we assume that $\hat A$ is normal in $A$. It
suffices to prove:
\begin{enumerate}
\item no edge of $\Ga$ has an odd label, unless it is an
    entire connected component, and
\item no interior edge of $\Ga$ has an even label $\geq 4$.
\end{enumerate}

Let us first prove (1). Suppose to the contrary that $\Gamma$
has an edge $(a,b)$ with odd label and another edge $(b,c)$.
Let $A_{ab}$ be the special subgroup generated by $a$ and~$b$,
and let $A'_{ab}$ be its index-two subgroup from the previous
section. Let $\hat A_{ab}=F_{k}\times \Z$ be a finite index
subgroup of $A'_{ab}\cap \hat A$ guaranteed by
Lemma~\ref{lem:free times z}(1). We can also assume that $\hat
A_{ab}$ is normal in $A'_{ab}$. Similarly, let $A_{bc}$ be the
special
subgroup generated by~$b$ and~$c$, and let $\hat A_{bc} = F_l\times
\Z$ be a finite index subgroup of $A_{bc}\cap \hat A$. Note
that the edge $(b,c)$ might be labelled by $2$ and then $l=1$.

Since $\hat A$ is a CAT(0) group, we can speak of its
asymptotic rank. By Theorem~\ref{K(pi,1)}(A), there exists a
finite $2$-dimensional cell complex that is a $K(A,1)$. Thus by
Lemma~\ref{rank}, the asymptotic rank of $\hat{A}$ is $\leq 2$
and so is the asymptotic rank of~$X$. The subgroup $A_{ab}$ is
convex with respect to the standard generators of~$A$ by
Lemma~\ref{lem:special convex} and so $\hat A_{ab}$ is
quasi-isometrically embedded in $\hat A$. We can thus apply
Theorem~\ref{product of hyperbolic} to find a convex subcomplex
$Y_{ab}$ that is $\hat A_{ab}$-cocompact. Moreover, there is a
cubical product decomposition $Y_{ab}=V_{ab}\times H_{ab}$ such
that the action of~$\hat A_{ab}$ respects this decomposition,
the vertical factor $V_{ab}$ is quasi-isometric to $\R$, and
the $\Z$ factor $Z$ of $\hat A_{ab}$ acts almost trivially on
$H_{ab}$.

Consider $\mathrm{Min}(Z) =\R\times V_0\subseteq V_{ab}$ for the induced action of $Z$, where $\R$ is an axis of~$Z$.
Since $Z$ is contained in the centre of $\hat A_{ab}$, we have an induced
action of $\hat A_{ab}$ on $\R\times V_0$ respecting this decomposition.
The factor $V_0$ is bounded, so $V_0$ contains a fixed-point of
the action of~$\hat A_{ab}$. Thus $\R\times V_0$ contains an $\hat A_{ab}$--invariant line~$l$.
Let $\rho:\hat A_{ab}\to \mathrm{Isom}(l)$ be the induced map. Note that $\rho(\hat A_{ab})$ does not flip the ends of~$l$.
Moreover, since $V_{ab}$ is a cube complex, the translation lengths on $l$ are discrete.
This gives rise to a homomorphism $\rho:\hat A_{ab}\to \Z$ assigning to each
element of $\hat A_{ab}$ its translation length on $l$. Note that $\rho(Z)\neq 0$. By Corollary~\ref{cor:odd n}
applied to $H=\hat A_{ab}$, there exists a nonzero integer $m$
and $g\in A'_{ab}$ such that $\rho((b^m)^g)\neq 0$.

By normality of $\hat A$, we have $(\hat A_{bc})^g\le \hat A$.
Let $Y_{bc}$ be a convex $(\hat A_{bc})^{g}$-cocompact
subcomplex guaranteed again by Theorem~\ref{product of
hyperbolic}. By \cite{Van1983homotopy} we have $A_{ab}\cap
A_{bc}=A_b$, and hence the groups $\angled{b^{m}}^g$ and $\hat
A_{ab}\cap (\hat A_{bc})^{g}$ have a common finite index
subgroup~$B$. Let $Y\subset Y_{ab}$ be the gate with respect to
$Y_{bc}$. Then $Y$ is the coarse intersection of $Y_{ab}$ and
$Y_{bc}$ by Lemma~\ref{gate}(3). By Lemma~\ref{lem:coarse}, $Y$ is $B$-cocompact.

Since $Y$ is a convex subcomplex, it has a product structure
$Y=Y_V\times Y_H$ where $Y_V\subseteq V_{ab}$ and $Y_H\subseteq
H_{ab}$. We have $\rho(B)\neq 0$, so $Y_V$ is
unbounded. Since $Y$ is quasi-isometric to $\R$, the factor
$Y_H$ is bounded. Since $Z$ acts almost trivially on~$H_{ab}$,
any of its orbits in $Y_{ab}$ is at a finite Hausdorff distance
from $Y$. Hence $Z$ is commensurable with $B$. Thus there
exists an integer $j\neq 0$ such that $(b^g)^{j}\in Z$, and
hence $b^{j}\in Z$, contradicting Lemma~\ref{lem:free times
z}(2).

Let us now prove (2). Suppose that $\Gamma$ has edges
$(a,b),(b,c),$ and $(c',a)$ (here $c$ and $c'$ are possibly the
same), where $(a,b)$ has an even label $\geq 4$. Let $\hat
A_{ab}, \hat A_{bc}, \hat A_{c'a}$ be finite index subgroups of
$A_{ab}\cap \hat A,A_{bc}\cap \hat A,A_{c'a}\cap \hat A$,
respectively, that are isomorphic to a product of a free group
and $\Z$. Assume moreover that $\hat A_{ab}$ is normal in
$A_{ab}$. Let $Y_{ab}=V_{ab}\times H_{ab}$ be a convex $\hat A_{ab}$-cocompact
subcomplex, and let $\rho:\hat A_{ab}\to \Z$ be defined as before. By Corollary~\ref{cor:even n}, there exist a
nonzero integer $m$ and $g\in A_{ab}$ such that at least one of
$(a^{m})^g$ and $(b^{m})^g$ lies in $\hat A_{ab}$ and is not
mapped to~$0$ under~$\rho$. Without loss of generality we can
assume $\rho((b^{m})^g)\neq 0$. The rest of the argument is
identical as in the proof of~(1).
\end{proof}

\section{3-generator Artin groups}
\label{sec:three}

This section is devoted to the proof of Theorem~\ref{thm:3gen}.
Let $A$ be the three-generator Artin group with
$m_{ab}=3,m_{bc}=2$, and $m_{ac}=3,4,$ or $5$, and let $W$ be
the Coxeter group with the same defining graph. Consider a
longest word in $a,b,c$ which is a minimal length
representative of the element it represents in $W$. This word
represents also an element of $A$, which we call $\Delta$.

\begin{lem}
\label{lem:proj}
\begin{enumerate}[(i)]
\item The centre $Z$ of $A$ is generated by $\Delta^2$ for
    $m_{ac}=3$ and by $\Delta$ for $m_{ac}=4$ or $5$.
\item The intersections of $A_{ab}$ and $A_{bc}$ with $Z$
    are trivial.
\item In $A$ we have $A_{ab}\times Z\cap A_{bc}\times
    Z=A_b\times Z$.
\end{enumerate}
\end{lem}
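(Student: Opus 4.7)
The plan is to prove (i), (ii), (iii) in order, with (iii) building on (ii).

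Part (i) is a classical theorem on the centre of a spherical (finite-type) Artin group, due to Brieskorn--Saito and Deligne: the centre is infinite cyclic, generated by $\Delta$ if the longest element $w_0$ of the Coxeter group acts trivially on the standard generators, and by $\Delta^2$ otherwise. The three Coxeter groups in question are of types $A_3,\,B_3,\,H_3$ for $m_{ac}=3,\,4,\,5$ respectively. For $A_3$, the element $w_0$ fixes $a$ and swaps $b$ with $c$, so $Z=\langle\Delta^2\rangle$; for $B_3$ and $H_3$, $w_0=-I$ acts trivially on the generators, so $Z=\langle\Delta\rangle$.

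For (ii), note that $A_{ab}\cap Z$ is central in $A_{ab}$, so $A_{ab}\cap Z\subseteq Z(A_{ab})=\langle(ab)^3\rangle$ (the centre of the type $A_2$ Artin subgroup). Since $m_{bc}=2$, $A_{bc}\cong\Z^2$ equals its own centre. To show these intersections are trivial, I would apply a homomorphism that separates the generator of $Z$ from the central generators of $A_{ab}$ and from $b,\,c$. For type $A_3$, the cleanest choice is the linking-number abelianization of the pure braid subgroup $P_4=\ker(A\to W)$: in $P_4^{\mathrm{ab}}\cong\Z^6$, the element $\Delta^2$ corresponds to $(1,1,1,1,1,1)$, $(ab)^3$ to $(1,1,1,0,0,0)$ (the three zeros corresponding to pairs involving strand $4$), and $b^2,\,c^2$ each to a vector supported on a single coordinate. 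These are linearly independent, yielding the conclusion. For types $B_3$ and $H_3$, analogous homomorphisms arise from the abelianization of $\ker(A\to W)$, which records linking with the hyperplanes of the reflection arrangement in $\R^3$, and the needed linear independence can be read off from the incidence pattern of those hyperplanes.

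For (iii), the inclusion $A_b\cdot Z\subseteq(A_{ab}\cdot Z)\cap(A_{bc}\cdot Z)$ is immediate. For the reverse, take $g=xz_1=yz_2$ with $x\in A_{ab}$, $y\in A_{bc}$, $z_i\in Z$, so that $x=yz$ with $z=z_2z_1^{-1}\in Z$. Project to $W$: the image of $Z$ is trivial for $A_3$ and lies in $\{1,w_0\}$ for $B_3,\,H_3$. A length argument in $W$ (every element of $W_{ab}$ has length at most $3$, whereas every element of $W_{bc}\cdot w_0$ has length at least $\ell(w_0)-2\geq 7$ for $B_3$ and $H_3$) rules out the possibility $\bar z=w_0$, so $\bar z=1$ in all three cases. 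Then $\bar x=\bar y\in W_{ab}\cap W_{bc}=W_b$, which gives $x=b^pw$ and $y=b^qw'$ with $w,\,w'$ lying in the pure kernel $\ker(A\to W)$. Rearranging $x=yz$ into an equation in the pure kernel and applying the linking-number homomorphism from (ii) forces the ``excess'' coordinates (those involving generators outside $\{b\}$) to vanish, so $y\in A_b$ and hence $g\in A_b\cdot Z$.

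The main obstacle is making the linking-number calculations explicit for types $B_3$ and $H_3$, where no classical braid-group model is directly available; the relevant homomorphism comes from the topology of the complement of the complexified reflection arrangement, and the linear-independence statement is reduced to checking the incidence of hyperplanes fixed by the parabolic subgroups $A_{ab},\,A_{bc},\,A_b$. A possible alternative, bypassing type-specific arithmetic, is a uniform argument based on the Garside normal form: any nonzero power of $\Delta$ must involve all standard generators, while elements of a proper parabolic do not, and this obstruction survives after tensoring with~$\Z$.
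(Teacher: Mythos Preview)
Your approach is sound and genuinely different from the paper's. For (ii) the paper argues via the Garside normal form: writing an element of $A_{ab}$ as $\Delta_{ab}^{-k}\phi(a,b)$ with $\phi$ positive, any equality $\phi(a,b)=\Delta_{ab}^k\Delta^l$ would hold already in the Artin monoid, which is impossible since $\Delta$ involves $c$. For (iii) the paper first reduces to showing $c^m\notin A_{ab}\cdot Z$ for $m\neq 0$; in type $A_3$ it invokes the mapping class group of the four-punctured disc, while in types $B_3,H_3$ it uses a delicate monoid rewriting argument exploiting that the $ac$-relation forces any two occurrences of $c$ to be separated by an $a$.

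Your route is more homogeneous: project to $W$ to force $\bar z=1$ (the length inequality disposing of $\bar z=w_0$ is clean), then pass to the abelianisation of the pure Artin group $P=\ker(A\to W)$, which is $\Z^N$ indexed by the reflection hyperplanes, with $\Delta^2$ mapping to the all-ones vector and each pure parabolic $P_T$ mapping into the span of the $W_T$-coordinates. Since $3+2-1=4<N$ in all three types, a coordinate outside both parabolics kills $z$, and then $A_{ab}\cap A_{bc}=A_b$ (van der Lek) finishes. This avoids the type-by-type case split and the mapping class group detour; the price is that you must justify the two facts about $P^{\mathrm{ab}}$ (they are standard --- $H_1$ of a hyperplane-arrangement complement is free on the meridians, and the inclusion of a sub-arrangement respects this), which you rightly flag as the only point needing care for $B_3$ and $H_3$. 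One small sharpening: once you obtain $z=1$ you can conclude $x=y\in A_{ab}\cap A_{bc}=A_b$ directly, rather than chasing the ``excess coordinates'' of $y$.
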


\begin{proof}
Assertion (i) follows from \cite[Theorem 4.21]{Deligne}.

For (ii), let $\Delta_{ab}=aba$. By
\cite[Proposition~4.17]{Deligne}, each element of $A_{ab}$ is
represented by $\Delta_{ab}^{-k}\phi(a,b)$, where $\phi$ is a
positive word in $a,b$, and $k\geq 0$. If we had
$\phi(a,b)=\Delta_{ab}^k\Delta^l$ for some $l>0,k\geq 0$, then
by \cite[Theorem~4.14]{Deligne} this equality would also hold
in the Artin semigroup, contradicting the fact that $\Delta$ is
expressed as a positive word involving all $a,b,c$. The same
argument works for $A_{bc}$.

For (iii) we need to show $A_{ab}\times Z\cap A_{bc}\times
    Z\subseteq A_b\times Z$. Since $b$ and $c$ commute, it
    suffices to show that for each $m\neq 0$ we have $c^m\notin A_{ab}\times Z$. If $m_{ac}=3$,
    then this follows from a well known fact that $A/Z$ is the mapping class group
    of the four punctured disc, where $A_{ab}$ fixes a curve
    around the first three punctures and $c$ is a half-Dehn
    twist in a curve around the third and the fourth.

If $m_{ac}=4$ or $5$, assume for contradiction that $c^m=gz$,
for some $z\in Z$ and $g\in A_{ab}$. Thus $gc^m=g^2z=gzg=c^mg$.
Let $g=\Delta_{ab}^{-k}\phi(a,b)$, where $\phi$ is a positive
word in $a,b$, and $k\geq 0$ is even. Thus
$\phi(a,b)c^m\Delta^{k}_{ab}=\Delta^k_{ab}c^m\phi(a,b)$.

By \cite[Theorem~4.14]{Deligne} this equality also holds in the
Artin semigroup. The relation $acac=caca$ or $acaca=cacac$
involves on each side $2$ occurences of $c$ separated by an
occurence of $a$. The word $\phi(a,b)c^m\Delta^{k}_{ab}$ does
not contain such a subword, and this property is invariant
under the replacements $bc=cb$, $aba=bab$. Thus to pass from
$\phi(a,b)c^m\Delta^{k}_{ab}$ to $\Delta^k_{ab}c^m\phi(a,b)$
one can only use $bc=cb$, and $aba=bab$, which is the relation
defining~$A_{ab}$. Thus there is $l$ such that in $A_{ab}$ we
have $\phi(a,b)b^l=\Delta^k_{ab}$. Hence $g=b^{-l}$. Thus
$c^m=b^{-l}z$, contradicting assertion (ii).
\end{proof}

We also need the following consequence of rank-rigidity
\cite{caprace2011rank}.

\begin{lem}
\label{lem:rigid} Let $G$ be a
cocompactly cubulated group
with centre containing $Z\cong\Z$.
Then $G$ has a finite index subgroup $G_0\times Z$ with $G_0$ cocompactly
cubulated.
\end{lem}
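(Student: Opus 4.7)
Let $X$ be a CAT(0) cube complex on which $G$ acts properly and cocompactly, and let $z$ generate $Z$. The plan is to build the combinatorial parallel set of a combinatorial axis of a power of $z$, use its canonical product decomposition from Lemma~\ref{parallel set} to produce a homomorphism $G\to\Isom(\R)$, and extract $Z$ virtually via the kernel.

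Since $G$ acts properly on $X$ and $z$ has infinite order, $z$ cannot fix a point; by Haglund's semi-simplicity theorem $z$ is a combinatorial hyperbolic isometry, and some power $z^N$ admits a combinatorial axis $\ell\subset X$. Let $P=\ell\times\ell^\perp$ be its combinatorial parallel set from Lemma~\ref{parallel set}. Centrality of $z^N$ in $G$ implies that every $G$-translate of $\ell$ is itself a combinatorial axis of $z^N$, hence parallel to $\ell$, hence contained in $P$; thus $P$ is $G$-invariant. Applying the $G$-equivariant closest-point projection $X\to P$ to a compact fundamental domain for $G\acts X$ shows that $G$ acts cocompactly on $P$. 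The product decomposition of $P$ is canonical and hence $G$-preserved, yielding a homomorphism $\pi_\ell\colon G\to\Isom(\ell)$.

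The image $\pi_\ell(G)$ acts cocompactly on $\ell\cong\R$ and permutes its vertex set, hence is discrete. Passing to a finite-index subgroup of $G$ (orientation-preserving on $\ell$) makes $\pi_\ell(G)$ infinite cyclic. Set $G_0:=\ker\pi_\ell$. The identification $P/G_0=\ell\times(\ell^\perp/G_0)$ combined with compactness of $P/G$ forces $\ell^\perp/G_0$ to be compact, so $G_0$ acts properly cocompactly on $\ell^\perp$. Centrality of $z$ in $G$ yields $\langle z\rangle\cdot G_0=\langle z\rangle\times G_0$; since $\pi_\ell(z)\neq 0$ (were it zero, $\pi_\ell(z^N)$ would be zero, contradicting that $\ell$ is an axis of $z^N$) we have $\langle z\rangle\cap G_0=1$. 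Moreover, $\langle z\rangle\times G_0$ surjects onto $\pi_\ell(\langle z\rangle)$, which has finite index in $\pi_\ell(G)\cong\Z$, so $[G:\langle z\rangle\times G_0]<\infty$. This gives the desired finite-index subgroup $G_0\times Z\le G$ with $G_0$ cocompactly cubulated on $\ell^\perp$.

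The main obstacle is verifying cocompactness of the $G$-action on $P$ and discreteness of $\pi_\ell(G)$, together with the fact that $z$ itself (not just a power) acts as a genuine translation on $\ell$; once these are in hand, the splitting follows by direct manipulation with Lemma~\ref{parallel set} and the centrality of $z$.
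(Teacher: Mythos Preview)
There is a genuine gap at the outset. Lemma~\ref{parallel set} (and the gate machinery underlying it) requires $\ell$ to be a \emph{convex subcomplex}, i.e.\ a CAT(0) geodesic line contained in the $1$--skeleton. Haglund's theorem only produces a combinatorial (that is, $\ell^1$) geodesic on which $z^N$ translates; such a staircase-type path is in general not CAT(0) convex, so Lemma~\ref{parallel set} does not apply to it. Concretely, take $X=\R^2$ with the standard cubing, $G=\Z^2$, and $z=(1,1)$. Every CAT(0) axis of every power of $z$ is a diagonal line, hence not a subcomplex; every Haglund combinatorial axis is a staircase, hence not CAT(0) convex. There is no convex subcomplex of $X$ isometric to $\R$ on which $z$ translates, so your $\ell$ simply does not exist here. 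Replacing $\ell$ by the combinatorial convex hull of a CAT(0) axis does not help either: in this example that hull is all of $\R^2$, and you lose the one-dimensionality you need to manufacture a homomorphism to~$\Z$.

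The paper handles exactly this obstruction via rank-rigidity \cite{caprace2011rank}: after passing to the essential core and a finite-index subgroup, $X$ splits as a \emph{cubical} product preserved by $G$, and one isolates a factor $X_V$ on which $Z$ acts freely and on which some element of $G$ has rank one. The rank-one condition forces the transverse factor of $\mathrm{Min}(Z)\subset X_V$ to be bounded, producing a $G$--invariant line $l\subset X_V$; one then reads off $\rho\colon G\to\Z$ from the action on $l$ and takes $G_0=\ker\rho$, which acts properly and cocompactly on the complementary factor $X_H$. In the $\Z^2$ example rank-rigidity picks out a coordinate factor $X_V=\R$, and $\rho$ becomes a coordinate projection --- precisely the structure your parallel-set construction cannot detect.
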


\begin{proof}
Suppose that $G$ acts properly and cocompactly by cubical
automorphisms on a CAT(0) cube complex $X$. By \cite[Corollary~6.4(iii)]{caprace2011rank},
if we replace~$X$ with its essential core, and $G$ with a finite-index
subgroup, we obtain a cubical product
decomposition of $X$ respected by $G$, such that for each
factor there is an element of $g\in G$ acting on it as a rank one isometry.
Let $X_V$ be a factor on which $Z$ acts freely, and combine
all other factors into $X_H$, so that $X=X_H\times X_V$.

Note that the generator $z$ of $Z$ acts on $X_V$ as a rank one
isometry. Otherwise an axis of $g$ as above would not be
parallel to an axis of $z$. Hence $g$ and $z$ would generate $\Z^2$
acting properly on $X_V$, contradicting the fact that $g$ has rank one.
Consider $\mathrm{Min}(Z) = \R\times Y\subseteq X_V$, where $\R$ is an axis of $Z$.
Since $Z$ is contained in the centre of $G$, we have an induced
action of $G$ on $\R\times Y$ respecting this decomposition.
Since $z$ has rank one, we have that $Y$ does not contain a geodesic ray, and
hence is bounded. Consequently, $Y$ contains a fixed-point of
the action of~$G$. Thus $X_V$ contains a $G$--invariant line
$l$.

Let $\rho:G\to \mathrm{Isom}(l)$ be the induced map. Note that $\rho(G)$ does not flip the ends of $l$.
Moreover, since $X_V$ is a cube complex, the translation lengths on $l$ are discrete. Thus the image of $\rho$ can be identified with $\Z$,
which contains $\rho(Z)$ as a finite index subgroup.
Let $G_0=\mathrm{ker}(\rho)$. Thus $Z\times G_0$
is a finite index subgroup of $G$.
Moreover, $G_0$ acts properly
by cubical automorphisms on $X_H\subset X$. Since the action of
$Z$ on $X_V$ is proper, the action of $G_0$ on $X_H$ is
cocompact.
\end{proof}

We complement Lemma~\ref{lem:rigid} with the following:

\begin{lem}
\label{lem:qi}
Let $G=G_0\times Z$ be finitely generated, with $Z\cong\Z$. Let $H<G$ be a finite product of finitely generated free
groups of rank $\geq 2$ that is quasi-isometrically embedded.
\begin{enumerate}[(i)]
\item
The map $H\to G/Z$ is a quasi-isometric embedding.
\item
Let $G$ be cocompactly cubulated. If we require that $H\cap Z$ is trivial, then assertion (i) holds
also if in the product we allow free groups of rank~$1$.
\end{enumerate}
\end{lem}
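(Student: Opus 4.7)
Both parts reduce to establishing the estimate
\begin{equation*}
|\pi_Z(h)|_Z \leq C\,|\pi_0(h)|_{G_0} + C' \text{ for all } h \in H.
\end{equation*}
The direct product structure $G=G_0\times Z$ gives $|g|_G\sim|\pi_0(g)|_{G_0}+|\pi_Z(g)|_Z$, and the QI hypothesis on $H\hookrightarrow G$ upgrades this to $|h|_H\sim|\pi_0(h)|_{G_0}+|\pi_Z(h)|_Z$ on $H$. The displayed estimate is then equivalent to $|h|_H\sim|\pi_0(h)|_{G_0}$, i.e.\ that $H\to G/Z=G_0$ is a QI embedding. Injectivity of this map is immediate: in (i) because $Z(H)=\{1\}$ for $H$ a product of rank $\geq 2$ free groups, so $H\cap Z\subseteq Z(H)=\{1\}$; in (ii) by hypothesis.

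For (i), if $\pi_Z|_H$ is trivial the estimate is automatic. Otherwise I would localise to a free factor $F_{n_i}$ on which $\pi_Z$ is nonzero and, using Nielsen transformations (available as $n_i\geq 2$), pick a free basis $t,u_1,\ldots,u_{n_i-1}$ of $F_{n_i}$ with $\pi_Z(t)=m\neq 0$ and $\pi_Z(u_j)=0$. The conjugates $t^Nu_jt^{-N}$ then lie in $\ker(\pi_Z|_H)$ and have $H$-length $\sim N$; the QI hypothesis forces their $G_0$-length to be $\sim N$, which combined with the triangle inequality gives $|\pi_0(t)^N|_{G_0}\geq cN$, so $\pi_0(t)$ has positive translation length in $G_0$. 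Running the analogous argument across Nielsen bases of all factors and tracking how the $\pi_0$-images of these generators combine on arbitrary words yields the linear estimate.

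For (ii), use Lemma~\ref{lem:rigid} to pass to a finite index subgroup so that $G=G_0\times Z$ acts cocompactly on a CAT(0) cube complex $X=X_H\times X_V$, with $Z$ acting as a rank one isometry on $X_V$ along an invariant line. Hyperplane counting decomposes $|h|_G$ as the number of hyperplanes crossed in $X_H$ plus those in $X_V$, and the latter equals $|\pi_Z(h)|_Z$ up to constants. The assumption $H\cap Z=\{1\}$ ensures the induced action $H\acts X_H$ is proper, and then cocompactness together with the fact that abelian subgroups of cocompactly cubulated groups are undistorted yields the linear bound even when free factors of rank $1$ are allowed.

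The main obstacle in both cases is ruling out elements of $H$ whose $Z$-component dominates their $G_0$-component. The example $H=\Z\hookrightarrow \mathrm{BS}(1,2)\times\Z$ via $n\mapsto(a^n,n)$ -- which is QI embedded in $G$ because the exponentially distorted cyclic subgroup $\langle a\rangle\leq \mathrm{BS}(1,2)$ gets rescued by the extra $\Z$-factor, yet whose projection to $G_0=\mathrm{BS}(1,2)$ is exponentially distorted -- shows that neither the rank $\geq 2$ hypothesis in (i) nor the cubulated hypothesis in (ii) can be dropped. These hypotheses are exactly what rule out such distortion, and the hard part of the proof is extracting a uniformly linear bound from them rather than merely a bound for each element individually.
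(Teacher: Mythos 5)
You have correctly reduced both parts to the estimate $|h|_H\lesssim|\pi_{G_0}(h)|_{G_0}$ and correctly diagnosed the difficulty (elements whose $Z$-component dominates their $G_0$-component), but the step that actually overcomes this difficulty is missing in both parts. In (i), your Nielsen-basis argument only produces the bound for the special elements $t^Nu_jt^{-N}$ and for powers of $t$; the final sentence (``tracking how the $\pi_0$-images of these generators combine on arbitrary words yields the linear estimate'') is precisely the crux and does not follow: knowing that each basis element has positive translation length in $G_0$ says nothing about cancellation in $G_0$ among the images of the letters of a long reduced word, so no uniform linear lower bound on $|\pi_{G_0}(h)|_{G_0}$ for general $h$ is obtained. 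The paper's mechanism is a one-step commutator trick applied to the arbitrary element $h$ itself: since the factor is free of rank $\geq 2$, one can choose a free generator $s$ so that $w=hsh^{-1}s^{-1}$ is a reduced word, hence $|w|_H=2|h|_H+2$; since $Z$ is abelian, $\pi_Z(w)=0$, so the quasi-isometric embedding hypothesis applied to $w$ (rather than to $h$) gives $2|h|_H+2\leq c\,|\pi_{G_0}(w)|_{G_0}\leq 2c\bigl(|\pi_{G_0}(h)|_{G_0}+|\pi_{G_0}(s)|_{G_0}\bigr)$, which is exactly the desired uniform bound. Your proposal contains no substitute for this step.

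In (ii) the gap is larger. The claim that $H\cap Z=\{1\}$ ``ensures the induced action $H\acts X_H$ is proper'' at best yields metric properness of $H\to G/Z$, which is far weaker than a linear bound (properness is compatible with arbitrary distortion). Moreover, undistortion of abelian subgroups handles the $\Z^n$ factor in isolation and assertion (i) handles the rank $\geq2$ factors in isolation, but neither rules out cancellation between $\pi_{G_0}(f)$ and $\pi_{G_0}(h_0)$ in a product $fh_0$ with $f\in\Z^n$, $h_0\in H_0$. The paper addresses exactly this interaction by working inside $\mathrm{Min}(\Z^n)=\R^n\times Y$ in the cube complex for $G/Z$: the Flat Torus Theorem gives $|f|_{\Z^n}\lesssim d_{\R^n}(v,f\cdot v)$, the commutator trick again gives $|h_0|_{H_0}\lesssim d_Y(y,h_0\cdot y)$ (because commutators of $H_0$ displace the $\R^n$-coordinate boundedly), and a two-case comparison of $|f|_{\Z^n}$ against $|h_0|_{H_0}$ then produces the bound for $fh_0$. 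Your $\mathrm{BS}(1,2)\times\Z$ example is a good sanity check showing the hypotheses are needed, but it does not replace the missing commutator and Min-set arguments.
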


\begin{proof}
If $H$ is a free group of rank $\geq 2$, then we choose in $H$ a free generating set~$S^\pm$.
In~$Z$ we consider the generating set $\{\pm1\}$ and in $G_0$ any symmetric generating set.
Let $|\cdot|_H,|\cdot|_{Z},|\cdot|_{G_0}$ denote
the corresponding word-lengths. Let $\pi_{G_0},\pi_Z$ be
the coordinate projections from $G$ to $G_0,Z$, respectively.
By assumption, there exists a constant~$c$ such that for any
$h\in H$, we have $|h|_H\leq
c\big(|\pi_{G_0}(h)|_{G_0}+|\pi_{Z}(h)|_{Z}\big)$.
Viewing~$h$ as a reduced word over $S^\pm$, choose $s\in S^\pm$ such that the word
$w=hsh^{-1}s^{-1}$ is reduced. Then $|\pi_{Z}(w)|_{Z}=0$, and
applying the above inequality with~$w$ in place of~$h$ we
obtain
$2|h|_H+2\leq
c|\pi_{G_0}(w)|_{G_0}\leq 2c\big(|\pi_{G_0}(h)|_{G_0}+|\pi_{G_0}(s)|_{G_0}\big)$. Consequently
$|h|_H\leq c|\pi_{G_0}(h)|_{G_0} +a$ for some uniform constant
$a$, and thus the restriction of $\pi_{G_0}$ to $H$ is a
quasi-isometric embedding, as desired.

Similarly, if $H$ is a product of free groups $H_i$ of rank $\geq 2$, then we choose generating
sets $S^\pm_i$ in $H_i$. Let $h=\prod h_i$ with $h_i\in H_i$. To get
an estimate on $|h|_H$, it suffices to use a product of reduced words $w=\prod
h_is_ih_i^{-1}s_i^{-1}$, with $s_i\in S_i^\pm$. This proves assertion (i).

If $G$ is cocompactly cubulated, then by Lemma~\ref{lem:rigid}, after passing to a finite index subgroup, the quotient $G/Z$
acts properly and cocompactly on a CAT(0)
cube complex $X$. Let $H=\Z^n\times H_0\leq G$, where $H_0$ is a finite product of finitely generated free
groups of rank $\geq 2$. We keep the notation $H$ for the isomorphic image of $H$ in $G/Z$. Then~$H$ preserves $\mathrm{Min}(\Z^n) = \R^n\times Y \subseteq X$ and respects its product structure.
We fix $v\in \R^n$ and $y\in Y$. From assertion~(i), the orbit map $h_0\to (h_0\cdot v,h_0\cdot y)$ from $H_0$
to $\R^n\times Y$ is a quasi-isometric embedding. Since the commutator
of $H_0$ acts trivially on the $\R^n$ factor, using the same
argument as for assertion (i), we obtain~$c$ satisfying $|h_0|_{H_0}\leq cd_Y(y,h_0\cdot y)$.
On the other hand, there is $c'$ such that for $f\in \Z^n$ we have $|f|_{\Z^n}\leq c'd_{\R^n}(v,f\cdot v)$.
Let $d$ be the maximum of the displacements $d_{\R^n}(v,s\cdot v)$ over the
generators $s$ of $H_0$. For $fh_0\in H$ consider the supremum
norm $\|fh_0\|=\sup \{|f|_{\Z^n}, 2c'd|h_0|_{H_0}\}$. If $|f|_{\Z^n}\geq
2c'd|h_0|_{H_0}$, then $$c'd_{\R^n}(v,fh_0\cdot v)\geq |f|_{\Z^n}-c'd|h_0|_{H_0}\geq
\frac{1}{2}|f|_{\Z^n}\geq \frac{1}{2}\|fh_0\|.$$ Otherwise, if
$|f|_{\Z^n}<
2c'd|h_0|_{H_0}$, then $$cd_Y(y, fh_0\cdot y)=cd_Y(y,h_0\cdot y)\geq
|h_0|_{H_0}>\frac{1}{2c'd}\|fh_0\|.$$ This proves assertion (ii).
\end{proof}

\begin{proof}[Proof of Theorem~\ref{thm:3gen}]

The implication (i)$\Rightarrow$(ii) is obvious. The implication
(iii)$\Rightarrow$(i) follows from Theorem~\ref{thm:constr}
unless the defining graph $\Gamma$ of $A$ has two edges $(a,c),(b,c)$ with label
$2$. By Theorem~\ref{thm:constr}, $A_{ab}$ is the fundamental
group of a nonpositively curved cube complex $K$. Then $K\times
S^1$ is a nonpositively curved cube complex with fundamental
group $A$.

The implication (ii)$\Rightarrow$(iii) follows from
Theorem~\ref{thm:2dim} if $A$ is $2$-dimensional. Suppose now
that $A$ is not $2$-dimensional. Then the labels of $\Gamma$
are $m_{ab}=3,m_{bc}=2$, and $m_{ac}=3,4,$ or $5$. Let $Z$ be
the centre of $A$ described in Lemma~\ref{lem:proj}(i).

Suppose that there exists a normal finite index subgroup $\hat
A\leq A$ that is cocompactly cubulated.
Let $\hat Z=\hat
A\cap Z$. By Lemma~\ref{lem:rigid}, up to replacing
$\hat A$ with a further finite index subgroup, we have
we have $\hat A=\hat A_0\times \hat Z$, where $\hat A_0$ is
cocompactly cubulated.
We keep the notation $\hat
A_0$ for its isomorphic image in the quotient $A/Z$. Note that
$\hat A_0\leq A/Z$ is a normal finite index subgroup.

By Theorem~\ref{K(pi,1)}(B), the Artin group $A$ is the
fundamental group of a $3$-dimensional cell complex which is
a~$K(A,1)$. Thus, by Lemma~\ref{rank}, the asymptotic rank of
$\hat A$ is~$\leq 3$. Hence the
asymptotic rank of $\hat A_0$ is $\leq 2$.

By Lemma~\ref{lem:proj}(ii), the intersections of $A_{ab}$ and
$A_{bc}$ with $Z$ are trivial. Thus $A_{ab}$ and $A_{bc}$ embed
into $A/Z$ under the quotient map, and we keep the notation
$A_{ab}$ and $A_{bc}$ for their images in $A/Z$.
By Lemma~\ref{lem:proj}(iii) in $A/Z$ we have $A_{ab}\cap
A_{bc}=A_b$.

Let $\hat A_{ab}=F_{k}\times \Z$
be a finite index subgroup of $A'_{ab}\cap \hat A_0$ guaranteed by Lemma~\ref{lem:free
times z}(1). We can assume that $\hat A_{ab}$ is normal in $A'_{ab}$. Let $\hat A_{bc}=\ A_{bc}\cap
\hat A_0=\Z^2$. By Lemmas~\ref{lem:special convex} and~\ref{lem:qi}(ii),
$\hat A_{ab},\hat A_{bc}<A/Z$ are quasi-isometric embeddings.

From this point we argue to reach a contradiction
exactly as in part (1) of the proof of Theorem~\ref{thm:2dim}.
\end{proof}

\bibliographystyle{alpha}
\bibliography{2-dim-artingroups10}

\end{document}